\title[Supercritically  diffusive \MG]{On the supercritically diffusive magneto-geostrophic equations}
\date{\today}
\author{Susan Friedlander}
\address{Department of Mathematics,
University of Southern California, 3620 S.~Vermont Ave.,
Los Angeles, CA 90089} \email{\tt susanfri@usc.edu}
\author{Walter Rusin}
\address{Department of Mathematics,
University of Southern California, 3620 S.~Vermont Ave.,
Los Angeles, CA 90089} \email{\tt wrusin@usc.edu}
\author{Vlad Vicol}
\address{Department of Mathematics,
The University of Chicago, 5734 University Ave.,
Chicago, IL 60637} \email{\tt vicol@math.uchicago.edu}
\theoremstyle{plain}
\newtheorem{theorem}{Theorem}[section]
\newtheorem{definition}[theorem]{Definition}
\newtheorem{lemma}[theorem]{Lemma}
\newtheorem{proposition}[theorem]{Proposition}
\newtheorem{corollary}[theorem]{Corollary}
\theoremstyle{definition}
\newtheorem{remark}[theorem]{Remark}
\def\tilde{\widetilde}
\numberwithin{equation}{section}
\renewcommand\hat{\widehat}
\def\ZZ{{\mathbb Z}}
\def\RR{{\mathbb R}}
\def\TT{{\mathbb T}}
\def\PP{\mathcal P}
\def\QQ{\mathbb Q}
\def\OO{\mathcal O}
\def\Th{\Theta}
\def\Ub{\boldsymbol U}
\def\MG{$(MG)$\ }
\def\supp{\mathop {\rm supp} \nolimits}
\def\kkx{k_{1}}
\def\kky{k_{2}}
\def\kkz{k_{3}}
\def\kk{\boldsymbol k}
\def\xx{\boldsymbol x}
\def\MM{\boldsymbol M}
\def\eps{\varepsilon}
\begin{document}


\begin{abstract}
We address the well-posedness theory for the magento-geostrophic equation, namely an active scalar equation in which the divergence-free drift velocity is one derivative more singular than the active scalar. In the presence of supercritical fractional diffusion given by $(-\Delta)^{\gamma}$ with $0<\gamma<1$, we discover that for $\gamma > 1/2$ the equations are locally well-posed, while for $\gamma < 1/2$ they are ill-posed, in the sense that there is no Lipschitz solution map. The main reason for the striking loss of regularity when $\gamma$ goes below $1/2$ is that the constitutive law used to obtain the velocity from the active scalar is given by an unbounded Fourier multiplier which is both even and anisotropic. Lastly, we note that the anisotropy of the constitutive law for the velocity may be explored in order to obtain an improvement in the regularity of the solutions when the initial data and the force have thin Fourier support, i.e. they are supported on a plane in frequency space. In particular, for such well-prepared data one may prove the local existence and uniqueness of solutions for all values of $\gamma \in (0,1)$.
\end{abstract}


\subjclass[2010]{76D03, 35Q35, 76W05}
\keywords{Magneto-geostrophic models, supercritical diffusion, active scalar equations, Hadamard ill-posedness, thin Fourier support, anisotropic constitutive law, continued fractions.}

\maketitle

\setcounter{tocdepth}{1} 
\tableofcontents

\section{Introduction}\label{sec:intro}

The geodynamo is the process by which the Earth's magnetic field is created and sustained by the motion of the fluid core which is composed of a rapidly rotating, density stratified, electrically conducting fluid. Recently Moffat \cite{Moffatt08} proposed a model for the geodynamo which is a reduction of the full magnetohydrodynamic system of PDE. This model can be viewed as a nonlinear active scalar equation in three dimensions. An explicit operator $\MM[\Th]$ encodes the physics of the geodynamo and produces the divergence free drift velocity $\Ub(t,\xx)$ from the scalar ``buoyancy'' field $\Th(t,\xx)$. We call this active scalar equation the magnetogeostrophic equation $(MG)$. It has some features in common with the much studied two dimensional surface quasigeostrophic equation $(SQG)$ (see \cite{CaffarelliVasseur10,ChaeConstantinCordobaGancedoWu11,ChaeConstantinWu11,ConstantinWu99,CordobaCordoba04,Ju04,KiselevNazarovVolberg07, Rusin11, Wu04} and references therein). However the $(MG)$ equation has a number of novel and distinctive features due to the strongly singular and anisotropic nature of the operator $\MM$.

The {\em critically diffusive} $(MG)$ equation is defined by the following system
\begin{align}
  &\partial_t \Th + \Ub \cdot \nabla \Th  =S + \kappa \Delta \Th \label{eq:1}\\
  &\nabla \cdot \Ub = 0\label{eq:2}
\end{align}
where $\kappa \geq 0$ is a physical parameter, and $S(t,\xx)$ is a $C^\infty$-smooth, bounded in time source term. The velocity  $\Ub$ is divergence-free, and is obtained from $\Th$ via
\begin{align}
U_j =  M_j  \Th \label{eq:3},
\end{align}
for all $j \in \{1,2,3\}$, where the $M_j$ are Fourier multiplier operators with symbols given explicitly by
\begin{align}
& \hat{M}_1(\kk) = \frac{2\Omega \kky \kkz |\kk|^2 - (\beta^2/\eta) \kkx \kky^{2} \kkz}{4 \Omega^2 \kkz^{2} |\kk|^2 + (\beta^2/\eta)^2 \kky^{4}}\label{eq:M:1}\\
& \hat{M}_2(\kk) = \frac{-2\Omega \kkx \kkz |\kk|^2 - (\beta^2/\eta) \kky^{3}\kkz}{4 \Omega^2 \kkz^{2} |\kk|^2 + (\beta^2/\eta)^2 \kky^{4}}\label{eq:M:2}\\
& \hat{M}_3(\kk) = \frac{(\beta^2/\eta) \kky^{2}(\kkx^{2} + \kky^{2})}{4 \Omega^2 \kkz^{2} |\kk|^2 + (\beta^2/\eta)^2 \kky^{4}}\label{eq:M:3}
\end{align}where the Fourier variable $\kk \in \ZZ^d$ is such that $\kkz\neq 0$. Note that $\hat{M}_{j}(\kk)$ is not defined on $k_{3}=0$, since for the self-consistency of the model it is assumed that $\Th$ and $\Ub$ have zero {\em vertical mean}. It can be directly checked that $k_{j} \cdot \hat{M}_j (\kk) = 0$ for all $\kk \in \ZZ^{d}\setminus \{ k_{3}=0\}$, and hence the velocity field $\Ub$ given by \eqref{eq:3} is indeed divergence-free. The physical parameters of the geodynamo are the following: $\Omega$ is the rotation rate of the Earth, $\eta$ is the magnetic diffusivity of the fluid core, $\kappa$ is the thermal diffusivity, and $\beta$ is the strength of the steady, uniform mean part of the magnetic field in the fluid core. The perturbation magnetic field vector $\boldsymbol{b}(t,x)$ is computed from $\Th(t,\xx)$ via the operator
\begin{align}
b_{j} =  (\beta/\eta) (-\Delta)^{-1} \partial_{2} M_{j} \Th,\qquad \mbox{for all}\ j \in \{1,2,3\}.
\end{align}
We observe that the presence of the underlying magnetic field, reflected in the parameter $\beta^2/\eta$, produces a non-isotropic structure in the symbols $\hat{M}_j$.

It is important to note that although the symbols $\hat{M}_{i}$ are $0$-order homogenous under the isotropic scaling $\kk \to \lambda \kk$, due to their anisotropy the symbols $\hat{M}_i$ are {not} bounded functions of $\kk$. To see this, note that whereas in the region of Fourier space where $k_{1} \leq \max \{ k_{2}, k_{3}\}$ the $\hat{M}_i$ are bounded by a constant, uniformly in $|\kk|$, this is not the case on the ``curved'' frequency regions where $k_{3} = {\mathcal O}(1)$ and $k_{2} = {\mathcal O}(|k_{1}|^{r})$, with $0< r \leq 1/2$. In such regions the symbols are unbounded, since as $|k_{1}|\rightarrow \infty$ we have
\begin{align}
|\hat{M}_{1}(k_{1},|k_{1}|^{r},1)| \approx |k_{1}|^{r},\ |\hat{M}_{2}(k_{1},|k_{1}|^{r},1)| \approx |k_{1}|,\ |\hat{M}_{3}(k_{1},|k_{1}|^{r},1)| \approx |k_{1}|^{2r}. \label{eq:unbounded:symbol}
\end{align}
In fact, it may be shown that $|\hat{\MM}(\kk)| \leq C |\kk|$, where $C(\beta,\eta,\Omega)>0$ is a fixed constant, and this bound is sharp. The fact that symbols are at most linearly \emph{unbounded} in the whole of Fourier space implies that $\MM[\Th]$ is the \emph{derivative} of a singular integral operator acting on $\Th$ (see \cite{FriedlanderVicol11a} for details), as opposed to the case of the $(SQG)$ equation where $\Ub$ is the Riesz transform of $\Th$. 

In \cite{FriedlanderVicol11a} we studied a class of nonlinear active scalar equations of which the $(MG)$ system (\ref{eq:1})--(\ref{eq:M:3}) is a member, namely (\ref{eq:1}) with
\begin{align}
	U_j = \partial_j T_{ij}\Th
\end{align}
for all $j \in \{ 1,2,3\}$ where $T_{ij}$ is a $3\times3$ matrix of Calder\'on-Zygmund operators of convolution type such that $\partial_i\partial_j T_{ij}\phi=0$ for any smooth function $\phi$. Inspired by the proof of Caffarelli and Vasseur \cite{CaffarelliVasseur10} for the global well-posedness of the critically diffusive $(SQG)$ equations, we used DeGiorgi techniques to obtain the global well-posedness of the critically diffusive $(MG)$ equations, namely (\ref{eq:1})--(\ref{eq:M:3}). We remark that  for both the diffusive $(MG)$ and $(SQG)$ equations {\em criticality}
is defined with respect to the $L^\infty$ maximum principle. In \cite{FriedlanderVicol11c} we considered the non-diffusive version of the $(MG)$ equations, namely (\ref{eq:1})--(\ref{eq:M:3}) with $\kappa=0$. In contrast with the critically diffusive problem where the $(MG)$ equation is globally well-posed and the solutions are $C^\infty$ smooth for positive time, we proved that when $\kappa=0$ the equation is Hadamard \emph{ill-posed} in any Sobolev space\footnote{The non-diffusive $(MG)$ equations are however locally well-posed in spaces of real-analytic functions, cf.~\cite{FriedlanderVicol11c}.} 
 which embeds in $W^{1,4}$.

In this present paper we study the fractionally diffusive version of the $(MG)$ equation, namely
\begin{align}
	&\partial_t \Th + \Ub \cdot \nabla \Th = S - \kappa (-\Delta)^\gamma\Th \label{eq:FractionalMG:1}\\
	& \nabla \cdot \Ub =0, \; \Ub = \MM[\Th] \label{eq:FractionalMG:2} 
\end{align}
with $U_j = M_j[\Th]$ where the symbols $\hat{M}_j$ are given by \eqref{eq:M:1}--\eqref{eq:M:3} and the parameter $\gamma \in (0,1)$. The main observation is that the structure of $\hat{M}_j$ (anisotropy, unboundedness, and evenness) and the {\em weak} smoothing effects of the nonlocal operator $(-\Delta)^\gamma$ combine to produce a sharp dichotomy across the value $\gamma=1/2$. More precisely, if $\gamma > 1/2$ the equations are locally well-posed, while if $\gamma < 1/2$ they are ill-posed in Sobolev spaces, in the sense of Hadamard. 

In section \ref{sec:>1/2} we use energy estimates to prove that for $\gamma \in (1/2,1)$ the fractionally diffusive $(MG)$ equations are locally well-posed in Sobolev spaces $H^s(\TT^3)$ for $s > 5/2 + (1-2 \gamma)$. With an additional small data assumption we obtain a global well-posedness result. To see why one may not use energy estimates to obtain local well-posedness of the \MG equations when $\gamma<1/2$, we point out that in the $H^{s}$ energy estimate for \eqref{eq:FractionalMG:1} there are only two terms for which more than ``$s$ derivatives'' fall on a single function:
\begin{align}
T_{bad} = \int \Lambda^{s} \Ub \cdot \nabla \Th\; \Lambda^{s} \Th \qquad \mbox{and} \qquad T_{good} = \int \Ub \cdot \nabla \Lambda^{s} \Th\; \Lambda^{s} \Th 
\end{align}
where we denoted $\Lambda = (-\Delta)^{1/2}$. Since $\nabla \cdot \Ub = 0$, upon integrating by parts we have $T_{good} = 0$. On the other hand, the term $T_{bad}$ does not vanish in general. The diffusion competing with $T_{bad}$ is given by $\kappa \| \Lambda^{s+\gamma} \Th \|_{L^{2}}^{2}$. Two issues arise. First, since $\Ub \approx \Lambda \Th$ and $\gamma<1/2$, and one cannot control $\| \Lambda^{s-\gamma} \Ub \|_{L^{2}} $ with $\| \Lambda^{s+\gamma} \Th \|_{L^{2}}$, and therefore bounding $T_{bad}$ could only be achieved by exploring some extra cancellation. Second, since the symbols $\hat{M}_{j}$ are even the operator $\MM$ is {\em not} anti-symmetric, thus one {cannot} re-write $T_{bad}$ as $-\int \Lambda^{s} \Th \cdot \MM[\nabla \Th\; \Lambda^{s}\Th]$ (note that if one could do this, a suitable commutator estimate of Coifman-Meyer type would close the estimates at the level of Sobolev spaces, as in \cite{ChaeConstantinCordobaGancedoWu11,FriedlanderVicol11c}). Hence there is no a commutator structure in $T_{bad}$, and the estimates do not seem to close at the level of Sobolev spaces when $0< \gamma < 1/2$. 

In section \ref{sec:<1/2} we consider the case $\gamma \in (0,1/2)$ and prove that the solution map associated with the Cauchy problem is {\em not} Lipschitz continuous with respect to perturbations in the initial data in the topology of a certain Sobolev space $X$. Hence the Cauchy problem is ill-posed in the sense of Hadamard. This is achieved by considering a specific steady profile $\Th_0$, and constructing functions which are ``close'' to $\Th_{0}$, but which in arbitrarily short time deviate arbitrarily far from $\Th_{0}$, when measured in $X$. The arguments hinge on using techniques of continued fractions to exhibit an unstable eigenvalue for the linearized equation (cf. \cite{FriedlanderStraussVishik97,MeshalkinSinai61}). These techniques produce a \emph{lower} bound on the growth rate of eigenvalues to the linearized equations, and in the case where $\gamma \in (0,1/2)$ we prove that the magnitude of this lower bound can be made arbitrarily large. Once unstable eigenvalues of arbitrarily large part have been obtained for the linearized equations, the Lipschitz ill-posednedness of the full non-linear equations may be obtained using classical arguments (see, e.g.~\cite{FriedlanderVicol11c,GerardVaretDormy10,GuoNguyen10,Renardy09,Tao06}). We emphasize that the mechanism producing ill-posedness is not merely the order one derivative loss in the map $\Th \to \Ub$. Rather, it is the combination of the derivative loss with the anisotropy of the symbol $M$ and the fact that this symbol is \emph{even}. We note that the even nature of the symbol of $M$ plays a central role in the proof of the non-uniqueness given in \cite{Shvydkoy11} for the $L^\infty$-weak solutions of the non-diffusive $(MG)$ equations. 

In section \ref{sec:=1/2} we study the more subtle transitional case when $\gamma=1/2$. In this case, if the initial data is ``small'' with respect to $\kappa$, we use energy estimates to prove that there exists a unique global solution. In dramatic contrast, there exists initial data which is ``large'' with respect to $\kappa$, and for which the associated linear operator has arbitrarily large unstable eigenvalues, which may be use to prove that the equations are Hadamard ill-posed. Situations where the qualitative behavior of the solution depends crucially on the norm of initial data are also encountered for other evolution equations. For example, in the two-dimensional Keller-Segel model of chemotaxis, properties of the solution are strongly dependent on the total mass $m$ of cells. If $m < 8\pi$, a global bounded solution of the initial value problem exists and is dispersed for $t\to \infty$, while for $m > 8\pi$, blow-up in finite time occurs. The critical case $m=8\pi$ is by now understood and the result states that a global solution exists and possibly becomes unbounded as $t \to \infty$ (see~\cite{HillenPainter09} and references therein). We also mention that for the Rayleigh-Taylor and the Helmholtz problems it is ``geometric'' conditions, rather than merely the size of the data, which leads to ill-posedness (see, for instance \cite{Ebin88}).

In section \ref{sec:well-prepared} we take advantage of the anisotropy of the symbols \eqref{eq:M:1}--\eqref{eq:M:3} and observe an interesting phenomenon: when the initial data and the source term are ``well-prepared'' in an appropriate sense, it is possible to prove stronger regularity results for the ensuing solutions. More specifically, when the initial data and the force have ``thin'' Fourier support, i.e. they are supported purely on a plane $\mathcal{P}_q= \{ \kk=(k_1,k_2,k_3) \in \ZZ^3: k_2= q k_1\} $ in frequency space,  where $q$ is a fixed non-zero rational number, the operator $\MM$ behaves like an operator of order zero. The reason is that the Fourier symbols $\hat{M}_j$ are unbounded only on curved regions in frequency space (cf.~\eqref{eq:unbounded:symbol} above), but if $\kk \in \PP_q$ the $\hat{M}_j(\kk)$ are bounded by a constant depending on $q$. This observation, combined with the fact that when the data and source have frequency support on $\PP_q$, then so does the solution of the \MG equations at all later times, means that the smoothing properties of the fractional diffusion term are stronger than they are for the generic data situation. Hence it is possible to prove stronger regularity results, and in particular the local existence and uniqueness of Sobolev solutions holds {\em for all} values of $\gamma \in (0,1)$.  
We note that this is not in contradiction with our results proven in sections~\ref{sec:<1/2} and \ref{sec:=1/2}, since in order to obtain ill-posedness we need to send $q \to 0$ (so that we obtain eigenvalues of the linear operator with arbitrarily large real part), whereas in the case of well-prepared data the value of $q>0$ is {\em fixed}. Other uses of thin Fourier support for different problems can be found, e.g. in \cite{BernicotGermain09,Tao06}.

%
%
%
%
%
%
%
%
%
%
%

\section{Preliminaries}

This section contains a few auxiliary results used in the paper. In particular, we recall the, by now classical, product and commutator estimates, as well as the Sobolev embedding inequalities. Proofs of these results can be found for instance in \cite{KenigPonceVega91,Stein93,Tao06}. Let us denote $\Lambda = (-\Delta)^{1/2}$.

\begin{proposition}[{\bf Product estimate}]
\label{prop:calculus}
If $s > 0$, then 	for all $f,g \in H^s\cap L^\infty$, and we have the estimates 
	\begin{align}
		&\|\Lambda^s(fg)\|_{L^p} \leq C\left( \|f\|_{L^{p_1}} \|\Lambda^s g\|_{L^{p_2}} + \|\Lambda^sf\|_{L^{p_3}} \|g\|_{L^{p_4}}\right), 
	\end{align}
	where $1/p = {1}/{p_1}+{1}/{p_2}= {1}/{p_3}+ {1}/{p_4}$, and $p,p_{2},p_{3} \in(1,\infty)$. In particular
	\begin{align}
		&\|\Lambda^s(fg)\|_{L^2} \leq C\left( \|f\|_{L^{\infty}} \|\Lambda^s g\|_{L^{2}} + \|\Lambda^sf\|_{L^{2}} \|g\|_{L^{\infty}}\right). \label{eq:prop:product}
	\end{align}
\end{proposition}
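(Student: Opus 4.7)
The plan is to prove the estimate via a Littlewood--Paley / Bony paraproduct decomposition. Writing $\{\Delta_j\}_{j \geq -1}$ for the usual inhomogeneous frequency projections and $S_j = \sum_{k \leq j-1} \Delta_k$, decompose
\begin{align*}
	fg = \Pi(f,g) + \Pi(g,f) + R(f,g),
\end{align*}
where $\Pi(f,g) = \sum_{j} S_{j-2} f \cdot \Delta_j g$ is the low-high paraproduct and $R(f,g) = \sum_{|j-k|\leq 1} \Delta_j f \cdot \Delta_k g$ is the diagonal remainder. Since $\Lambda^s$ is a Fourier multiplier and each summand in $\Pi(f,g)$ has frequency support in a dyadic annulus of size $\sim 2^j$, the operator $\Lambda^s$ acts on that block as multiplication by a symbol comparable to $2^{js}$.

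For the high-frequency paraproduct, the frequency localization together with the Littlewood--Paley square function characterization of $L^p$ (valid since $p \in (1,\infty)$) yields
\begin{align*}
	\|\Lambda^s \Pi(f,g)\|_{L^p} \lesssim \Big\| \Big( \sum_j 2^{2js} |S_{j-2} f|^2 |\Delta_j g|^2 \Big)^{1/2} \Big\|_{L^p}.
\end{align*}
Bounding $|S_{j-2} f| \leq C \mathcal{M} f$ pointwise by the Hardy--Littlewood maximal function, applying H\"older in $L^p = L^{p_1} \cdot L^{p_2}$, and then the Fefferman--Stein vector-valued maximal inequality (which requires $p_2 \in (1,\infty)$) gives $\|\Lambda^s \Pi(f,g)\|_{L^p} \lesssim \|f\|_{L^{p_1}} \|\Lambda^s g\|_{L^{p_2}}$. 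By symmetry and the hypothesis $p_3 \in (1,\infty)$, one obtains $\|\Lambda^s \Pi(g,f)\|_{L^p} \lesssim \|\Lambda^s f\|_{L^{p_3}} \|g\|_{L^{p_4}}$.

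The remaining work is the diagonal term $R(f,g)$, which is the main technical obstacle since its dyadic blocks $\Delta_j f \cdot \Delta_k g$ with $|j-k|\leq 1$ are frequency-supported in balls rather than annuli, so one cannot directly invoke the square function. The standard device is to fix a frequency level $N$, regroup $\Delta_N (R(f,g)) = \sum_{j \geq N - C} \Delta_N(\Delta_j f \cdot \tilde\Delta_j g)$, and distribute the gain $2^{Ns}$ as $2^{Ns} \leq 2^{js}$ on the factor that carries $\Lambda^s$, then sum using a geometric series in $j-N$ before inserting the result into the square function and closing via Fefferman--Stein exactly as above. The full estimate then follows by summing the three pieces. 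For the particular case \eqref{eq:prop:product}, choose $p = p_2 = p_3 = 2$ and $p_1 = p_4 = \infty$; the $L^\infty$ factors are controlled trivially (the maximal function is bounded on $L^\infty$), which is consistent with the restriction that only the exponents carrying $\Lambda^s$ need to lie in $(1,\infty)$.
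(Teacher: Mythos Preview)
The paper does not actually prove this proposition: it is stated in the Preliminaries section as a classical result, with the remark that ``Proofs of these results can be found for instance in \cite{KenigPonceVega91,Stein93,Tao06}.'' So there is no in-paper argument to compare against.

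Your proof via the Bony paraproduct decomposition is a standard and correct route to this Kato--Ponce type estimate, and is essentially what one finds in the cited references. The treatment of the two paraproduct pieces $\Pi(f,g)$ and $\Pi(g,f)$ via frequency localization, the square function characterization of $L^p$ for $p\in(1,\infty)$, and Fefferman--Stein is exactly right; your handling of the diagonal remainder $R(f,g)$ by regrouping at a fixed output frequency $N$, using $2^{Ns}\leq 2^{js}$ for $j\geq N-C$, and summing the resulting geometric series is also the standard device. The specialization to $p=p_2=p_3=2$, $p_1=p_4=\infty$ is handled correctly, since for the low-frequency factor one simply uses $|S_{j-2}f|\leq \|f\|_{L^\infty}$ pointwise rather than invoking Fefferman--Stein on an $L^\infty$ factor.
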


In the case of a commutator we have the following estimate.

\begin{proposition}[{\bf Commutator estimate}]
	Suppose that $s >0$ and $p \in (1,\infty)$. If, $f,g \in \mathcal{S}$, then
	\begin{align}
		&\|\Lambda^s(fg) - f \Lambda^{s} g \|_{L^p} \leq C\left(\| \nabla f\|_{L^{p_1}}\|\Lambda^{s-1} g\|_{L^{p_2}} + \|\Lambda^s f\|_{L^{p_3}}\|g\|_{L^{p_4}} \right) \label{eq:prop:commutator}
	\end{align}
	where $s>0$, ${1}/{p} = {1}/{p_1}+{1}/{p_2}={1}/{p_3}+{1}/{p_4}$, and $p,p_{2},p_{3} \in(1,\infty)$. 
\end{proposition}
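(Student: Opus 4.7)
The plan is to prove this Kato--Ponce type commutator estimate via a Littlewood--Paley / paraproduct decomposition. Fix a dyadic partition of unity so that $f = \sum_j \Delta_j f$ and $g = \sum_k \Delta_k g$, and write Bony's decomposition
\begin{equation*}
  fg = T_f g + T_g f + R(f,g) = \sum_j S_{j-3}f\,\Delta_j g + \sum_j S_{j-3}g\,\Delta_j f + \sum_j \sum_{|j-k|\le 2} \Delta_j f\,\Delta_k g,
\end{equation*}
and likewise for $f\Lambda^s g$. Subtracting, the commutator $[\Lambda^s,f]g = \Lambda^s(fg) - f\Lambda^s g$ naturally decomposes into three pieces:
\begin{equation*}
  [\Lambda^s,f]g = \underbrace{\sum_j [\Lambda^s, S_{j-3}f]\,\Delta_j g}_{\mathrm{I}\ (\text{low-high})} + \underbrace{\sum_j \bigl(\Lambda^s(S_{j-3}g\,\Delta_j f) - f\,\Lambda^s(S_{j-3}g\,\ldots)\bigr)}_{\mathrm{II}\ (\text{high-low})} + \mathrm{III}\ (\text{diagonal}),
\end{equation*}
where I should understand the last two pieces as the symmetric and remainder terms of the paraproduct identity.

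First I would treat the high-low term $\mathrm{II}$ and the diagonal term $\mathrm{III}$, which are the easier ones: in these the $\Lambda^s$ falls entirely on $f$ after using almost-orthogonality of Littlewood-Paley blocks, and one estimates by $\|\Lambda^s f\|_{L^{p_3}} \|g\|_{L^{p_4}}$ using the Fefferman--Stein maximal function inequality and H\"older. For the diagonal piece one uses Bernstein's inequality to recover the $\Lambda^s$ on $f$ from the dyadic block $\Delta_j f$.

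The main technical step, and the genuine commutator, is the low-high term $\mathrm{I}$. For each $j$ the symbol of $[\Lambda^s,S_{j-3}f]\Delta_j g$ has, in the frequency region $|\xi|\ll|\eta|$ where it is effectively supported, the form
\begin{equation*}
  |\xi+\eta|^s - |\eta|^s = \int_0^1 s\,|\eta+\tau\xi|^{s-2}(\eta+\tau\xi)\cdot\xi\; d\tau,
\end{equation*}
which extracts precisely one derivative from $f$ (the $\xi$ factor) and leaves $s-1$ derivatives on $g$. The standard way to make this rigorous is to write the commutator as a convolution
\begin{equation*}
  [\Lambda^s,S_{j-3}f]\Delta_j g(x) = \int K_j(x,y)\bigl(S_{j-3}f(x)-S_{j-3}f(y)\bigr)\,\Delta_j g(y)\, dy,
\end{equation*}
where the kernel $K_j$ is the Schwartz kernel of $\Lambda^s \Delta_j$, apply the mean value theorem to the $f$-difference to produce $\nabla f$, and absorb the remaining oscillatory kernel by the Hardy--Littlewood maximal function bound. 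Summing in $j$ with the aid of almost-orthogonality and H\"older in the exponent split $1/p = 1/p_1 + 1/p_2$ gives the bound $\|\nabla f\|_{L^{p_1}}\|\Lambda^{s-1}g\|_{L^{p_2}}$.

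The main obstacle is this last step: one must carefully justify the kernel-level mean value argument and the maximal function domination in a form valid for all $1<p_1,p_2<\infty$ with $p,p_2\in(1,\infty)$; the endpoint conditions on the exponents exactly match those under which the Fefferman--Stein vector-valued maximal inequality and the Mihlin/Coifman--Meyer multiplier theorem apply. Once this piece is handled, combining I, II and III with H\"older yields \eqref{eq:prop:commutator}.
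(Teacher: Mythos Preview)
The paper does not give its own proof of this proposition: it is stated in the Preliminaries as a classical result, with the sentence ``Proofs of these results can be found for instance in \cite{KenigPonceVega91,Stein93,Tao06}.'' So there is nothing in the paper to compare your argument against beyond those citations.

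Your outline is the standard Littlewood--Paley/paraproduct route that underlies the Kenig--Ponce--Vega proof, and is correct in spirit. One cosmetic point: your expression for term~II is garbled. Decomposing $f\Lambda^s g$ by Bony as $T_f(\Lambda^s g) + T_{\Lambda^s g} f + R(f,\Lambda^s g)$ and subtracting from $\Lambda^s(fg)$, the high--low piece should read $\mathrm{II} = \Lambda^s(T_g f) - T_{\Lambda^s g} f$, which is then controlled by $\|\Lambda^s f\|_{L^{p_3}}\|g\|_{L^{p_4}}$ via almost-orthogonality and Bernstein, as you say. With that correction the sketch matches the argument in the cited references.
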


We shall use as well the following Sobolev inequality.

\begin{proposition}[{\bf Sobolev inequality}] \label{prop:Sobolev}
Suppose that $q > 1$, $p \in [q,\infty)$ and ${1}/{p}={1}/{q}-{s}/{d}$. Suppose that $\Lambda^s f \in L^q$, then $f \in L^p$ and there is a constant $C > 0$ such that 
	\begin{align}
		\|f\|_{L^p} \leq C \|\Lambda^s f\|_{L^q}.
	\end{align}
\end{proposition}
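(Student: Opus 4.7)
The plan is to represent $f$ as a Riesz potential applied to $\Lambda^s f$ and then reduce the claim to the classical Hardy--Littlewood--Sobolev inequality. Since $\Lambda = (-\Delta)^{1/2}$ is defined via its Fourier symbol $|\kk|$, on the torus (or on $\RR^d$, up to the usual care about low frequencies) one has the inversion identity $f = \Lambda^{-s}(\Lambda^s f)$, valid at least modulo constants when $\Lambda^s f$ has zero mean, which is essentially automatic from $\Lambda^s f \in L^q$ with $q > 1$ in the $\RR^d$ setting. On $\RR^d$, for $0 < s < d$, the operator $\Lambda^{-s}$ coincides up to a constant with the Riesz potential $I_s g(\xx) = c_{s,d} \int |\xx-\yy|^{-(d-s)} g(\yy)\, d\yy$.

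First I would record this convolution representation and then invoke the Hardy--Littlewood--Sobolev inequality, which states precisely that for $1 < q < p < \infty$ with $1/p = 1/q - s/d$, the map $g \mapsto I_s g$ is bounded from $L^q(\RR^d)$ to $L^p(\RR^d)$. Applying this to $g = \Lambda^s f$ yields
\begin{align*}
\|f\|_{L^p} = \|I_s (\Lambda^s f)\|_{L^p} \leq C \|\Lambda^s f\|_{L^q},
\end{align*}
which is the required estimate. The boundary case $p=q$ (that is, $s=0$) is trivial.

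The proof of the HLS inequality itself proceeds in the standard way: one first establishes the weak-type $(q,p)$ endpoint bound for $I_s$ by splitting the convolution kernel $|\xx|^{-(d-s)}$ into a near part, controlled by the Hardy--Littlewood maximal function via a layer-cake argument, and a far part, controlled by H\"older's inequality; one then applies the Marcinkiewicz interpolation theorem to upgrade the weak-type estimate to the strong-type estimate on the whole admissible range of exponents. This is the main analytical content, and hence the main technical obstacle in the argument.

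On the torus $\TT^d$, where the problem of this paper lives, I would instead argue via a Littlewood--Paley decomposition $f = \sum_{j\geq -1} \Delta_j f$: Bernstein's inequality gives $\|\Delta_j f\|_{L^p} \lesssim 2^{jd(1/q - 1/p)} \|\Delta_j f\|_{L^q}$ and the frequency localization gives $\|\Delta_j f\|_{L^q} \lesssim 2^{-js} \|\Lambda^s \Delta_j f\|_{L^q}$, so that summing the geometric series (once one accounts for the scaling relation $1/p = 1/q - s/d$ using the Littlewood--Paley square function characterization of $L^p$ norms, $p \in (1,\infty)$) yields the same inequality with a torus constant. This reduces matters to the same ingredients (Bernstein plus boundedness of the Littlewood--Paley projections on $L^p$, $p \in (1,\infty)$), and is the route I would follow to keep the argument self-contained in the setting of \eqref{eq:FractionalMG:1}--\eqref{eq:FractionalMG:2}.
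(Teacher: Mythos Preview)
The paper does not actually prove this proposition: it is stated in the Preliminaries section as a classical fact, with the remark that proofs ``can be found for instance in \cite{KenigPonceVega91,Stein93,Tao06}.'' Your first argument---writing $f = \Lambda^{-s}(\Lambda^s f)$, identifying $\Lambda^{-s}$ with the Riesz potential $I_s$ for $0<s<d$, and invoking Hardy--Littlewood--Sobolev---is precisely the standard proof one finds in those references (see in particular Stein~\cite{Stein93}), so in that sense you are aligned with what the paper is implicitly citing.

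One small caution on your torus variant: once you combine Bernstein with $\|\Delta_j f\|_{L^q} \lesssim 2^{-js}\|\Lambda^s \Delta_j f\|_{L^q}$ at the critical scaling $1/p = 1/q - s/d$, the resulting per-block bound $\|\Delta_j f\|_{L^p} \lesssim \|\Delta_j(\Lambda^s f)\|_{L^q}$ has \emph{no} decay in $j$, so there is no geometric series to sum. You acknowledge this by invoking the square-function characterization of $L^p$, but to close the argument you then need a vector-valued step (e.g.\ the embedding $\dot F^{s}_{q,2} \hookrightarrow \dot F^{0}_{p,2}$, or equivalently a vector-valued Bernstein/HLS bound), not just the scalar Littlewood--Paley theorem. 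This is routine and available in the same references, but as written your torus sketch hides exactly the step that carries the analytic content.
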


Throughout this paper we shall make use the following definition of solutions of \eqref{eq:1}--\eqref{eq:3}.
\begin{definition}[\bf Solution of the \MG\ equation]
 Let $s \geq 1/2$, $T>0$, $\kappa>0$, and $\gamma \in (0,1)$. Given $\Th_0 \in H^s(\TT^3)$, we call a function
 \begin{align}
\Th \in L^\infty(0,T;H^s(\TT^3)) \cap L^2(0,T; H^{s+\gamma}(\TT^3))
\end{align}
a {\em solution} of the Cauchy problem \eqref{eq:1}--\eqref{eq:3}, if 
\begin{align}
\int_{\TT^3} \left( \Th(t,\cdot) - \Th_0 \right)  \varphi d\xx  + \kappa \int_0^t  \int_{\TT^3} \Th (-\Delta)^\gamma \varphi d\xx ds - \int_0^t \int_{\TT^3} \Th\; \Ub \cdot \nabla \varphi d\xx ds = 0 
\end{align}
holds for all test functions $\varphi \in C_0^\infty(\TT^3)$, and all $t\in (0,T)$.
\end{definition}

\section{The regime $1/2<\gamma < 1$: well-posedness results} \label{sec:>1/2}

It has been shown in \cite{FriedlanderVicol11a} that the system (\ref{eq:1})--(\ref{eq:M:3}) is globally well-posed when $\gamma =1$. In this section we prove that in the range $\gamma \in (1/2,1)$ equations (\ref{eq:1})--(\ref{eq:M:3}) are locally well-posed in $H^s(\TT^3)$. With an additional assumption on the size of the initial data and the source term, we obtain a global well-posedness result.
\footnote{For our convenience, we choose to work in sub-critical Sobolev spaces. The following proofs can be also carried out in the setting of critical Besov spaces $B^s_{2,1}$ or in general $B^{s}_{p,q}$.} 
 
Regarding arbitrarily large initial data, we obtain the following result.
 
\begin{theorem}[{\bf Local existence}]\label{thm:121:local}
Let $\gamma \in (1/2,1)$, and fix $s>5/2 + (1-2\gamma)$. Assume that $\Th_0 \in H^s(\TT^3)$ and $S \in L^\infty(0,\infty;H^{s-\gamma}(\TT^3))$ have zero-mean on $\TT^{3}$. Then there exists a time $T>0$ and a unique smooth solution 
\begin{align} \Th \in L^\infty(0,T;H^s(\TT^3)) \cap L^2(0,T;H^{s+\gamma}(\TT^3))
\end{align} of the Cauchy problem \eqref{eq:FractionalMG:1}--\eqref{eq:FractionalMG:2}.
\end{theorem}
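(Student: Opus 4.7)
The plan is the classical one for quasilinear parabolic problems: derive a closed a priori bound for $\|\Th(t)\|_{H^s}$ on some positive time interval, produce a smooth solution via a regularized scheme (say Galerkin truncation in Fourier), and prove uniqueness by an analogous energy estimate for the difference of two solutions.

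To obtain the a priori bound, I apply $\Lambda^s$ to \eqref{eq:FractionalMG:1}, pair with $\Lambda^s\Th$ in $L^2(\TT^3)$, and use $\int_{\TT^3} \Ub\cdot\nabla(\Lambda^s\Th)\,\Lambda^s\Th\,d\xx = 0$, which follows from $\nabla\cdot\Ub=0$. What remains is the nonlinear contribution $T_{bad}:= -\int \Lambda^s(\Ub\cdot\nabla\Th)\,\Lambda^s\Th\,d\xx$, the source term (handled by Cauchy-Schwarz in $H^{s-\gamma}\times H^{s+\gamma}$), and the dissipation $\kappa\|\Lambda^{s+\gamma}\Th\|_{L^2}^2$. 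The central move is to shift $\gamma$ derivatives across by Plancherel,
\begin{align*}
|T_{bad}|\leq \|\Lambda^{s-\gamma}(\Ub\cdot\nabla\Th)\|_{L^2}\,\|\Lambda^{s+\gamma}\Th\|_{L^2},
\end{align*}
and then apply the product estimate \eqref{eq:prop:product} together with the symbol bound $|\hat{\MM}(\kk)|\leq C|\kk|$, used to replace $\|\Lambda^{s-\gamma}\Ub\|_{L^2}$ by $C\|\Lambda^{s-\gamma+1}\Th\|_{L^2}$, to obtain
\begin{align*}
\|\Lambda^{s-\gamma}(\Ub\cdot\nabla\Th)\|_{L^2} \leq C\bigl(\|\Ub\|_{L^\infty}+\|\nabla\Th\|_{L^\infty}\bigr)\,\|\Lambda^{s-\gamma+1}\Th\|_{L^2}.
\end{align*}

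The hypothesis $\gamma>1/2$ enters decisively at this point: the chain $s<s-\gamma+1<s+\gamma$ is valid only in this regime, and Gagliardo-Nirenberg interpolation then yields
\begin{align*}
\|\Lambda^{s-\gamma+1}\Th\|_{L^2}\leq \|\Th\|_{H^s}^{(2\gamma-1)/\gamma}\,\|\Lambda^{s+\gamma}\Th\|_{L^2}^{(1-\gamma)/\gamma}.
\end{align*}
The low-order factors $\|\Ub\|_{L^\infty}$ and $\|\nabla\Th\|_{L^\infty}$ are controlled by the Sobolev embedding through $\|\Lambda^{5/2+\delta}\Th\|_{L^2}$ (for arbitrarily small $\delta>0$), combined when $s\leq 5/2$ with the further interpolation $\|\Lambda^{5/2+\delta}\Th\|_{L^2}\leq\|\Th\|_{H^s}^{1-\theta}\,\|\Lambda^{s+\gamma}\Th\|_{L^2}^{\theta}$ where $\theta=(5/2+\delta-s)/\gamma$; this interpolation is well posed since the hypothesis on $s$ forces $s+\gamma>5/2$. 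Substituting everything back into $T_{bad}$, Young's inequality absorbs $\|\Lambda^{s+\gamma}\Th\|_{L^2}^2$ into the dissipation precisely when the total exponent of $\|\Lambda^{s+\gamma}\Th\|_{L^2}$ appearing on the right, namely $\theta+1/\gamma$, is strictly less than $2$. After letting $\delta\to 0^+$ this condition rearranges to $s>5/2+(1-2\gamma)$, which is exactly the hypothesis on $s$. Arranging the interpolation exponents so that Young's inequality closes at this sharp threshold is the main technical step; everything else is comparatively routine.

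The resulting differential inequality
\begin{align*}
\frac{d}{dt}\|\Th\|_{H^s}^2 + \frac{\kappa}{2}\|\Lambda^{s+\gamma}\Th\|_{L^2}^2 \leq P\bigl(\|\Th\|_{H^s}\bigr)+C\|S\|_{H^{s-\gamma}}^2,
\end{align*}
for an explicit polynomial $P$, gives by a nonlinear Gronwall argument a time $T=T(\|\Th_0\|_{H^s},\|S\|_{L^\infty_tH^{s-\gamma}_x})>0$ on which $\|\Th\|_{L^\infty(0,T;H^s)}+\sqrt{\kappa}\,\|\Th\|_{L^2(0,T;H^{s+\gamma})}$ is controlled. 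Existence is then obtained by applying the same estimate uniformly to a Galerkin truncation in Fourier and passing to the limit via Aubin-Lions compactness in the weak formulation \eqref{eq:FractionalMG:1}--\eqref{eq:FractionalMG:2}. Uniqueness is proved by an $H^s$-energy estimate for the difference $w=\Th^{(1)}-\Th^{(2)}$ of two solutions, since the two nonlinear contributions $\MM[w]\cdot\nabla\Th^{(1)}$ and $\Ub^{(2)}\cdot\nabla w$ are each controlled by the very same interpolation scheme used above, which again closes precisely because $\gamma>1/2$.
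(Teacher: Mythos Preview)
Your proposal is correct and follows the same overall energy-estimate strategy as the paper: pair with $\Lambda^{2s}\Th$, shift $\gamma$ derivatives onto the dissipation, apply a Kato--Ponce product estimate to $\Lambda^{s-\gamma}(\Ub\cdot\nabla\Th)$, interpolate, and close with Young's inequality and Gr\"onwall. The only genuine tactical difference is in how the product estimate and interpolation are organized. The paper applies the product estimate in the $L^{p}\times L^{2p/(p-2)}$ pairing with a carefully chosen $p\in(3/(s-1),\,6/(5-4\gamma))$, bounds $\|\Lambda^{s-\gamma+1}\Th\|_{L^p}$ by $\|\Lambda^s\Th\|_{L^2}^{1-\zeta}\|\Lambda^{s+\gamma}\Th\|_{L^2}^{\zeta}$ via Sobolev embedding, and uses the $L^p$-boundedness of $\Lambda^{-1}\MM$; the nonemptiness of the range for $p$ is exactly equivalent to $s>5/2+(1-2\gamma)$. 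You instead take the $L^\infty\times L^2$ endpoint and close with two purely $L^2$-based Gagliardo--Nirenberg interpolations, one for $\|\Lambda^{s-\gamma+1}\Th\|_{L^2}$ and one for $\|\Ub\|_{L^\infty}+\|\nabla\Th\|_{L^\infty}\lesssim\|\Lambda^{5/2+\delta}\Th\|_{L^2}$. Both routes produce the same exponent condition $\theta+1/\gamma<2$, hence the same threshold on $s$. Your version is somewhat more elementary---it needs only the Plancherel bound $\|\Lambda^{s-\gamma}\Ub\|_{L^2}\leq C\|\Lambda^{s-\gamma+1}\Th\|_{L^2}$ rather than the $L^p$ Calder\'on--Zygmund theory for $\Lambda^{-1}\MM$---at the mild cost of a case split according to whether $s>5/2$; the paper's choice of $p$ absorbs that case distinction into a single parameter.
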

\begin{proof}
	 We multiply equation (\ref{eq:1}) by $\Lambda^{2s}\Th$, integrate by parts to obtain the following $H^s$-energy inequality
\begin{align}\label{eq:p1}
&\frac{1}{2}\frac{d}{dt}\|\Lambda^s\Th\|^2_{L^2} + \kappa\|\Lambda^{s+\gamma}\Th\|^2_{L^2} \leq \left| \int_{\RR^3} S \Lambda^{2s}\Th\right| + \left| \int_{\RR^3} \Ub \cdot \nabla \Th \Lambda^{2s}\Th\right|.
\end{align}	
We estimate the first term on the right side by
\begin{align}
	&\left| \int_{\RR^3} S \Lambda^{2s}\Th\right| \leq \|\Lambda^{s-\gamma}S\|_{L^2}\|\Lambda^{s+\gamma}\Th\|_{L^2} \leq \frac{1}{2\kappa} \|\Lambda^{s-\gamma}S\|_{L^2}^2 + \frac{\kappa}{2}\|\Lambda^{s+\gamma}\Th\|_{L^2}^2.
\end{align}
To handle the second term we proceed as follows. First note that
\begin{align}
	& \left| \int_{\RR^3} \Ub \cdot \nabla \Th \Lambda^{2s}\Th\right| = \left| \int_{\RR^3} \Lambda^{s-\gamma}(\Ub\cdot\nabla \Th)\Lambda^{s+\gamma}\Th\right| \leq \|\Lambda^{s-\gamma}(\Ub \cdot \nabla \Th)\|_{L^2}\|\Lambda^{s+\gamma}\Th\|_{L^2}.
\end{align}
The estimate of the product term follows from Proposition \ref{prop:calculus}. Hence, we have
\begin{align}\label{eq:p8}
	&\|\Lambda^{s-\gamma}(\Ub \cdot \nabla \Th)\|_{L^2} \leq C(\|\Lambda^{s-\gamma}\Ub\|_{L^p}\|\nabla \Th\|_{L^{\frac{2p}{p-2}}} + \|\Lambda^{s-\gamma}\nabla \Th\|_{L^p}\|\Ub\|_{L^{\frac{2p}{p-2}}})
\end{align}
for some $2 < p < \infty$ to be chosen later.
Recall that $U_j = M_j\Th $, where the symbol of the multiplier $M_j$ enjoys a uniform bound $|\hat{M}_j(\kk)| \leq C|\kk|$, for $j=1,2,3$. Therefore $\Lambda^{-1} M_{j}$ are bounded operators on $L^{p}$ (see \cite[Section 4]{FriedlanderVicol11a} for details) and we obtain
\begin{align}
	&\|\Lambda^{s-\gamma}\Ub\|_{L^p} \leq C \|\Lambda^{s-\gamma+1}\Th\|_{L^p}
\end{align}
and 
\begin{align} 
\| \Ub \|_{L^{\frac{2p}{p-2}}} \leq C \| \Lambda \Th \|_{L^{\frac{2p}{p-2}}}.
\end{align}
Hence, the product term is bounded by
\begin{align}\label{eq:2.8}
	&\|\Lambda^{s-\gamma}(\Ub \cdot \nabla \Th)\|_{L^2} \leq C \| \Lambda \Th\|_{L^{\frac{2p}{p-2}}} \|\Lambda^{s-\gamma+1}\Th\|_{L^p}.
\end{align}
We now fix an arbitrary $p$  such that
\begin{align*}
\frac{3}{s-1} < p  <  \frac{6}{2(1-2\gamma)+3} = \frac{6}{5-4\gamma}.
\end{align*}
Note that $p>2$ since $s>3/2$, and the range for $p$ is non-empty since $s> 5/2 + (1-2\gamma)$. For $\gamma \in (1/2,1)$, our choice of $p$ and the Sobolev embedding (Proposition~\ref{prop:Sobolev}) gives
\begin{align} 
\|\Lambda^{s-\gamma+1}\Th\|_{L^p} \leq \|\Lambda^s\Th\|^{1-\zeta}_{L^2}\|\Lambda^{s+\gamma}\Th\|_{L^2}^{\zeta}
\end{align}
where $\zeta \in (0,1)$ may be computed explicitly from $\gamma \zeta = 3/2 - 3/p + 1-\gamma$. In addition, since $\Th$ has zero mean, and $p> 3/(s-1)$, from the Sobolev embedding we obtain
\begin{align}\label{eq:2.9}
	&\|\Lambda \Th\|_{L^{\frac{2p}{p-2}}} \leq C \|\Lambda^s\Th\|_{L^2}.
\end{align}
Combining estimates (\ref{eq:p1})-(\ref{eq:2.9}) gives
\begin{align}\label{eq:p6}
	& \frac{d}{dt}\|\Lambda^s\Th\|^2_{L^2} + \kappa\|\Lambda^{s+\gamma}\Th\|^2_{L^2} \leq \frac{1}{\kappa} \|\Lambda^{s-\gamma}S\|_{L^2}^{2} + C\|\Lambda^s\Th\|_{L^2}^{2-\zeta}\|\Lambda^{s+\gamma}\Th\|_{L^2}^{1+\zeta}
\end{align}
where $ 0 < \zeta< 1$ is as defined earlier.
The second term on the right side of (\ref{eq:p6}) is bounded using the $\epsilon$-Young inequality as 
\begin{align}
 \frac{\kappa}{2}\|\Lambda^{s+\gamma}\Th\|_{L^2}^2 +C \kappa^{- \frac{2(1+\zeta)}{1-\zeta}} \|\Lambda^s\Th\|_{L^2}^{\frac{2 (2-\zeta)}{1-\zeta}}.
\end{align}
and we finally obtain the following estimate
\begin{align}\label{eq:p7}
	&\frac{d}{dt}\|\Lambda^s\Th\|^2_{L^2} + \frac{\kappa}{2}\|\Lambda^{s+\gamma}\Th\|^2_{L^2} \leq \frac{1}{\kappa}\|\Lambda^{s-\gamma}S\|_{L^2}^2 + C \kappa^{- \frac{2(1+\zeta)}{1-\zeta}} \|\Lambda^s\Th\|_{L^2}^{\frac{2 (2-\zeta)}{1-\zeta}}.
\end{align}
Using Gronwall's inequality, from estimate (\ref{eq:p7}) we may deduce the existence of a positive time $T=T(\|S\|_{L^\infty(0,T;H^{s-\gamma})},\|\Th_0\|_{H^s},\kappa)$ such that $\theta \in L^\infty(0,T;H^s(\TT^3))\cap L^2(0,T;H^{s+\gamma}(\TT^3))$. Note that we have $2(2-\zeta)/(1-\zeta) >2$, and hence we may not obtain the global existence of solutions from the energy estimate \eqref{eq:p7}, if the initial data has large $H^{s}$ norm. These a priori estimates can be made formal using a standard approximation procedure. We omit further details.
\end{proof}

The second main result of this section concerns global well-posedness in case of small initial data.

\begin{theorem}[{\bf Global existence for small data}]\label{thm:121:global}
Let $\gamma$ and $S$ be as in the statement of Theorem~\ref{thm:121:local}, and let $ \Th_0 \in H^{s}(\TT^3)$ have zero-mean on $\TT^3$, where $s > 5/2+(1-2\gamma)$. There exists a  small enough constant $\varepsilon>0$ depending on $\kappa$, such that if  $\Vert\Th_0\Vert_{L^2}^\alpha \Vert \Th_{0} \Vert_{H^{s}}^{1-\alpha} +\Vert\Th_0\Vert_{L^2}^\alpha \Vert S \Vert_{L^\infty(0,\infty;H^{s-\gamma})}^{1-\alpha}\leq \varepsilon$, where $\alpha = 1 - (5/2+1-2\gamma)/s$, then the unique smooth solution $\Th$ of the Cauchy problem  \eqref{eq:FractionalMG:1}--\eqref{eq:FractionalMG:2} is global in time, i.e.
\begin{align} \Th \in L^\infty(0,\infty;H^s(\TT^3)). 
\end{align}
\end{theorem}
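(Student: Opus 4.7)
The plan is to upgrade the subcritical bound on the nonlinear term used in the proof of Theorem~\ref{thm:121:local} to a scaling-critical estimate, and then to use interpolation with the $L^2$ energy to absorb the resulting nonlinear term into the dissipation whenever the data are small in the corresponding critical norm. Set $s_0 := 5/2+(1-2\gamma) = 7/2-2\gamma$. A direct scaling check shows that $H^{s_0}$ is the scale-critical Sobolev space for \eqref{eq:FractionalMG:1}--\eqref{eq:FractionalMG:2}, and the exponent $\alpha = 1-s_0/s$ in the hypothesis is exactly the interpolation coefficient in
\[
\|\Lambda^{s_0}\Th\|_{L^2}\leq C\|\Th\|_{L^2}^{\alpha}\|\Lambda^s\Th\|_{L^2}^{1-\alpha}.
\]
Thus the smallness hypothesis is essentially smallness of $\|\Th_0\|_{H^{s_0}}$, together with a matching interpolated quantity for $S$.

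The first step is to redo the estimates \eqref{eq:p8}--\eqref{eq:2.9} in the proof of Theorem~\ref{thm:121:local}, but with the specific balanced choice of exponents $p = 3/(2\gamma-1)$ and $q = 6/(5-4\gamma)$, which satisfy $1/p+1/q=1/2$ and $p,q\in (2,\infty)$ for $\gamma\in(1/2,1)$. For this choice the sharp Sobolev embeddings give
\[
\|\Lambda\Th\|_{L^p}+\|\nabla\Th\|_{L^p}\leq C\|\Lambda^{s_0}\Th\|_{L^2}, \qquad \|\Lambda^{s-\gamma+1}\Th\|_{L^q}\leq C\|\Lambda^{s+\gamma}\Th\|_{L^2},
\]
and combined with the $L^r$-boundedness of the multiplier $\Lambda^{-1}\MM$ they yield the scale-invariant nonlinear bound
\[
\left|\int_{\TT^3} \Lambda^{s-\gamma}(\Ub\cdot\nabla\Th)\,\Lambda^{s+\gamma}\Th\, d\xx\right|\leq C\|\Lambda^{s_0}\Th\|_{L^2}\|\Lambda^{s+\gamma}\Th\|_{L^2}^2.
\]
Applying the interpolation inequality above and treating the source term as in \eqref{eq:p7}, the $H^s$ energy identity becomes
\[
\frac{d}{dt}\|\Lambda^s\Th\|_{L^2}^2 + \left(\kappa-C\|\Th\|_{L^2}^{\alpha}\|\Lambda^s\Th\|_{L^2}^{1-\alpha}\right)\|\Lambda^{s+\gamma}\Th\|_{L^2}^2 \leq \frac{1}{\kappa}\|\Lambda^{s-\gamma}S\|_{L^2}^2.
\]
In parallel, testing \eqref{eq:FractionalMG:1} against $\Th$ and using that $\Ub$ is divergence-free, that $\Th$ has zero mean, and the Poincar\'e inequality on $\TT^3$ gives the uniform-in-time bound
\[
\|\Th(t)\|_{L^2}^2 \leq \|\Th_0\|_{L^2}^2 + C\kappa^{-2}\|S\|_{L^\infty(0,\infty;L^2)}^2.
\]

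The conclusion is a standard continuation argument. Setting $\Phi(t):=\|\Th(t)\|_{L^2}^{\alpha}\|\Lambda^s\Th(t)\|_{L^2}^{1-\alpha}$, the local existence theorem and continuity of $\Phi$ imply that $\Phi(t)<\kappa/(2C)$ on a maximal interval $[0,T^\ast)$. On this interval, the $H^s$ inequality above combined with Poincar\'e and Gronwall yields
\[
\|\Lambda^s\Th(t)\|_{L^2}^2 \leq \|\Lambda^s\Th_0\|_{L^2}^2 + C\kappa^{-2}\|S\|_{L^\infty(0,\infty;H^{s-\gamma})}^2,
\]
and multiplying this with the $L^2$ bound, expanding via the subadditivity $(a+b)^r\leq C(a^r+b^r)$, and invoking the smallness hypothesis (together with Young's inequality to handle the mixed cross terms) produces a bound $\Phi(t)\leq \kappa/(4C)$ on $[0,T^\ast)$ provided $\varepsilon=\varepsilon(\kappa)$ is chosen sufficiently small. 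This strict improvement of the bootstrap hypothesis forces $T^\ast=+\infty$, yielding the global $L^\infty(0,\infty;H^s(\TT^3))$ bound. I expect the main technical obstacle to be the bookkeeping in this last step: balancing the absorption threshold against the source-driven growth of $\Phi(t)$ and verifying that every mixed product in the expansion is dominated by the assumed interpolated smallness. The scale-critical nonlinear estimate is the genuinely new ingredient beyond the local theory; the rest follows the standard small-data paradigm.
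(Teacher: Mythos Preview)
Your proposal is correct and follows essentially the same route as the paper: the choice $q=6/(5-4\gamma)$ matches the paper's exponent $p$, your $p=3/(2\gamma-1)$ is exactly the paper's $2p/(p-2)$, and the resulting critical nonlinear bound $\|\Lambda^{s-\gamma}(\Ub\cdot\nabla\Th)\|_{L^2}\leq C\|\Th\|_{L^2}^{\alpha}\|\Lambda^s\Th\|_{L^2}^{1-\alpha}\|\Lambda^{s+\gamma}\Th\|_{L^2}$ together with the bootstrap/continuation argument is precisely what the paper does in \eqref{eq:p9}--\eqref{eq:p15}. Your framing via the critical index $s_0$ and the inclusion of the source term in the $L^2$ energy bound are minor cosmetic improvements but do not constitute a different argument.
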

\begin{proof}
We proceed as in the proof of Theorem \ref{thm:121:local}. The product term in (\ref{eq:p8}) is now estimated by
\begin{align}
	&\|\Lambda^{s-\gamma}(\Ub \cdot \nabla \Th)\|_{L^2} \leq C(\|\Lambda^{s-\gamma}\Ub\|_{L^p}\|\nabla \Th\|_{L^{\frac{2p}{p-2}}}+ \|\Lambda^{s-\gamma}\nabla \Th\|_{L^p}\|\Ub\|_{L^\frac{2p}{p-2}}),
\end{align}
where
\begin{align*}
	p = \frac{6}{2(1-2\gamma)+3}
\end{align*}
so that
\begin{align}
	&\|\Lambda^{s+1-\gamma}\Th\|_{L^p} \leq C\|\Lambda^{s+\gamma}\Th\|_{L^2}.
\end{align}
With this choice of $p$ and the above embedding, the product estimate gives us
\begin{align}\label{eq:p9}
	\|\Lambda^{s-\gamma}(\Ub \cdot \nabla \Th)\|_{L^2} \leq C\|\nabla \Th\|_{L^{\frac{2p}{p-2}}}\|\Lambda^{s+\gamma} \Th\|_{L^2}.
\end{align}
Combining (\ref{eq:p9}) with (\ref{eq:p1}) and proceeding as in (\ref{eq:p7}) we obtain
\begin{align}\label{eq:p10}
	&\frac{1}{2}\frac{d}{dt}\|\Lambda^s\Th\|^2_{L^2} + \kappa\|\Lambda^{s+\gamma}\Th\|^2_{L^2} \leq  \|\Lambda^{s-\gamma}S\|_{L^2}\|\Lambda^{s+\gamma}\Th\|_{L^2} + C\|\nabla \Th\|_{L^{\frac{2p}{p-2}}}\|\Lambda^{s+\gamma} \Th\|_{L^2}^2,
\end{align}
which in turn implies
\begin{align}\label{eq:p11}
	&\frac{1}{2}\frac{d}{dt}\|\Lambda^s\Th\|^2_{L^2} + \frac{\kappa}{2}\|\Lambda^{s+\gamma}\Th\|^2_{L^2} \leq \frac{1}{2\kappa}\|\Lambda^{s-\gamma}S\|_{L^2}^2 + C\|\nabla \Th\|_{L^{\frac{2p}{p-2}}}\|\Lambda^{s+\gamma} \Th\|_{L^2}^2.
\end{align}
Observe that
\begin{align}\label{eq:p12}
	\|\nabla \Th\|_{L^{2p/(p-2)}} \leq C \|\Th\|_{L^2}^{\alpha}\|\Lambda^s \Th\|_{L^2}^{1-\alpha},
\end{align}
where $\alpha = 1- (5/2+1-2\gamma)/s$. 
Therefore, if
\begin{align}\label{eq:p13}
	\|\Th\|_{L^2}^{\alpha}\|\Lambda^s \Th\|_{L^2}^{1-\alpha} \leq \frac{\kappa}{4C},
\end{align}
estimate (\ref{eq:p11}), combined with the Poincar\'e inequality $\| \Lambda^{s+\gamma} \Th \|_{L^{2}} \geq \| \Lambda^{s} \Th \|_{L^{2}}$, shows that
\begin{align}
\frac{d}{dt} \|\Lambda^s\Th\|^2_{L^2} + \frac{\kappa}{2}\|\Lambda^{s}\Th\|^2_{L^2} \leq \frac{1}{\kappa} \| S\|_{L^{\infty}_{t}H^{s-\gamma}_{x}}^{2} .
\end{align}
and hence
\begin{align} \label{eq:p13a} 
 \|\Lambda^s\Th(t,\cdot) \|^2_{L^2}  \leq \|\Lambda^s\Th_{0} \|^2_{L^2} + \frac{2}{\kappa^{2}} \| S\|_{L^{\infty}_{t}H^{s-\gamma}_{x}}^{2}
\end{align}
for all $t>0$. Note that taking the $L^2$-product of the equation with $\Th$ gives for any $t >0$
\begin{align}\label{eq:p14}
	\|\Th(t,\cdot)\|^2_{L^2} + \kappa \int_0^t\|\Lambda^\gamma \Th\|^2_{L^2}\;d\tau \leq \|\Th_0\|^2_{L^2},
\end{align}
which gives us a basic uniform estimate of $\Th$ in $L^{\infty}_{t}L^2_{x}$. Hence, from \eqref{eq:p13a} and \eqref{eq:p14} we obtain that condition \eqref{eq:p13} is satisfied for all $t>0$ as long as we have
\begin{align}\label{eq:p15}
 	& \|\Th_0\|_{L^2}^{\alpha}\|\Lambda^s \Th_0\|_{L^2}^{1-\alpha} + \|\Th_0\|_{L^2}^{\alpha} \|\Lambda^{s-\gamma}S(\tau,\cdot)\|_{L^\infty(0,\infty;L^2)}^{1-\alpha} < \epsilon,
\end{align}
where $\epsilon$ is sufficiently small, thereby concluding the proof of the theorem.
\end{proof}



\section{The regime $0<\gamma < 1/2$: ill-posedness results} \label{sec:<1/2}

We now turn to the situation when $\gamma \in (0,1/2)$, $\kappa > 0$, and prove that there exists an example of an initial datum and time independent force for which it is possible to prove that the fractionally diffusive \MG equation is ill-posed in the sense of Hadamard. The arguments by which we prove this follow the lines of the proof in the case when $\kappa = 0$ in \cite{FriedlanderVicol11c}. We sketch here the proof of $\gamma \in (0,1/2)$, but as we shall prove below in Section~\ref{sec:=1/2_large_data}, the result is also true when $\gamma = 1/2$ and the initial data is {\em large} in a suitable sense.

\subsection{Linear ill-posedness in $L^2$} \label{eq:sec:linear:ill}
The classical approach to Hadamard ill-posedness for non-linear problems is to first linearize the equations about a suitable steady state $\Theta_0$, and then prove that the resulting linear operator $L$ has eigenvalues with arbitrarily large real part in the unstable region (see for instance~\cite{FriedlanderVicol11c,GerardVaretDormy10,GuoNguyen10,Renardy09}, and references therein). 
Let $\Theta_{0} = a \sin (mx_{3})$ for some positive amplitude $a$ and integer frequency $m \geq 1$,  and define $S = \kappa a m^{2\gamma} \sin (m x_{3})$. It is not hard to verify that $\Theta_{0}$ is indeed a steady state of \eqref{eq:1}--\eqref{eq:2}. We consider the {\em linear} evolution of the perturbation $\theta = \Th - \Th_{0}$, obtained from linearizing \eqref{eq:1}--\eqref{eq:2}
\begin{align} 
\partial_{t} \theta + a m \cos(m x_{3}) M_{3}\theta+ \kappa (-\Delta)^{\gamma} \theta =  0 \label{eq:SN:5.1}, 
\end{align}
where we recall that $M_{3}$ is the Fourier multiplier with symbol defined in \eqref{eq:M:3}.
The following theorem states that the linear equation \eqref{eq:SN:5.1} is Hadamard Lipschitz ill-posed in $L^{2}$.
\begin{theorem}[\bf Linear ill-posedness] \label{thm:SN:5.1}
The Cauchy problem associated to the linear evolution
\begin{align}
  \partial_t \theta = L\theta \label{eq:linearequation}
\end{align} where the linear operator $L$ and the steady state $\Theta_{0}$ are given by
\begin{align}
L\theta(\xx,t) &= - M_3 \theta (\xx,t)\; \partial_3 \Theta_0(x_3) - \kappa (-\Delta)^{\gamma} \theta\label{eq:L:def}\\
\Theta_0(x_3) & = a \sin(m x_3) \label{eq:steadystatedef}
\end{align}for some $a>0$,  integer $m\geq 1$, and $\gamma \in (0,1/2]$, is ill-posed in the sense of Hadamard over $L^{2}$. More precisely, for any $T>0$ and any $K>0$, there exists a real-analytic initial data $\theta(0,\xx)$ such that the Cauchy problem associated to \eqref{eq:linearequation}--\eqref{eq:steadystatedef} has no solution $\theta \in L^{\infty}(0,T;L^{2})$ satisfying
\begin{align}
\sup_{t\in [0,T)} \Vert \theta(t,\cdot) \Vert_{L^{2}} \leq K \Vert \theta(0,\cdot)\Vert_{Y} \label{eq:thm:boundedness}
\end{align}
where $Y$ is any Sobolev space embedded in $L^{2}$.
\end{theorem}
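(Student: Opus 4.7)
The plan is to reduce the Hadamard statement to a spectral problem for $L$, exhibit eigenvalues of $L$ with arbitrarily large real part via the continued-fraction machinery of Meshalkin--Sinai and Friedlander--Strauss--Vishik, and then convert this spectral growth into the failure of \eqref{eq:thm:boundedness} by the classical normalized-eigenfunction argument. Writing separated solutions $\theta(t,\xx) = e^{\sigma t}\phi(\xx)$ and using that $\partial_3\Theta_0(x_3) = am\cos(m x_3)$ depends only on $x_3$, multiplication by this cosine couples each Fourier coefficient $\hat\phi(k_1,k_2,k_3)$ only to $\hat\phi(k_1,k_2,k_3\pm m)$. Fixing the horizontal wave-vector $(k_1,k_2)$ and setting $\kk_j = (k_1,k_2,jm)$ and $\psi_j = \hat\phi(\kk_j)$, the eigenvalue equation $\sigma\phi = L\phi$ becomes the infinite tridiagonal system
\begin{align*}
(\sigma + \kappa |\kk_j|^{2\gamma})\psi_j + \frac{am}{2}\bigl(\hat M_3(\kk_{j-1})\psi_{j-1} + \hat M_3(\kk_{j+1})\psi_{j+1}\bigr) = 0.
\end{align*}
The solvability condition for an $\ell^2$ sequence $(\psi_j)$ is the vanishing of a meromorphic function of $\sigma$ assembled out of the two semi-infinite continued fractions running inward from $j = \pm\infty$; convergence of each tail is automatic from the bound $|\hat M_3(\kk_j)| \lesssim |\kk_j|$ weighed against the diagonal term $\kappa|\kk_j|^{2\gamma}$ once $|j|$ is large.

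The crux is producing a root $\sigma$ with $\Re\sigma$ arbitrarily large. For this I would exploit the anisotropy highlighted in \eqref{eq:unbounded:symbol}: choosing $k_2 \sim |k_1|^{1/2}$ places $\kk_0$ precisely in the unbounded region of $\hat M_3$, making $|\hat M_3(\kk_0)| \sim |k_1|$ while the damping diagonal is merely $\kappa|\kk_0|^{2\gamma} \sim \kappa|k_1|^{2\gamma}$. The coupling coefficients in the recurrence then exceed the dissipation by the factor $|k_1|^{1-2\gamma}$, which diverges as $|k_1|\to\infty$ whenever $\gamma < 1/2$. Truncating the recurrence to a finite window of indices $j$ around $0$ where $\hat M_3$ is large, and controlling the off-window tails as geometrically small perturbations via the decay of $|\hat M_3(\kk_j)|/(\kappa|\kk_j|^{2\gamma})$ for $|j|\to\infty$, reduces the leading-order spectrum to that of a real symmetric Jacobi-type matrix with off-diagonal entries of size $am|k_1|$ and diagonal entries of size $\kappa|k_1|^{2\gamma}$. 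Because $\hat M_3$ is positive and even on the relevant frequencies, the coupling terms $\hat M_3(\kk_{j-1})$ and $\hat M_3(\kk_{j+1})$ enter additively without oscillatory cancellation, producing a real root $\sigma(k_1) \gtrsim a m |k_1| - C\kappa|k_1|^{2\gamma} \to +\infty$. The critical case $\gamma = 1/2$ runs through the same argument but only produces an unstable root when $a$ is chosen large enough relative to $\kappa$, consistent with the large-data caveat for Section~\ref{sec:=1/2}.

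Finally, I would transfer the spectrum into Hadamard ill-posedness by the standard eigenfunction-blow-up device. Given eigenpairs $(\sigma_n,\phi_n)$ with $\Re\sigma_n \to \infty$ as above, the continued-fraction construction yields $\phi_n$ real-analytic with Fourier coefficients concentrated around $\kk_n$ and decaying geometrically away from it, so that for any Sobolev space $Y \hookrightarrow L^2$ the ratio $\|\phi_n\|_Y/\|\phi_n\|_{L^2}$ grows only polynomially in $|\kk_n|$. If \eqref{eq:thm:boundedness} held, applying it to the normalized datum $\theta_n(0) = \phi_n/\|\phi_n\|_Y$ would force $\sup_{t\in[0,T)}\|\theta_n(t,\cdot)\|_{L^2} \leq K$, yet the explicit solution $\theta_n(t) = e^{\sigma_n t}\phi_n/\|\phi_n\|_Y$ has $L^2$ norm equal to $e^{\Re\sigma_n t}\|\phi_n\|_{L^2}/\|\phi_n\|_Y$, which diverges as $n\to\infty$ for any fixed $t \in (0,T)$ because $\Re\sigma_n$ grows linearly in $|\kk_n|$ while the Sobolev ratio grows only polynomially; contradiction. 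The main obstacle I expect is the rigorous analysis of the infinite continued fraction, specifically establishing convergence of both semi-infinite tails and extracting a root with the claimed linear lower bound $\Re\sigma \gtrsim |k_1|$ uniformly in the truncation. This is precisely the step where the combination of the evenness, anisotropy, and unboundedness of $\hat M_3$ enters essentially, and it parallels the non-diffusive analysis of \cite{FriedlanderVicol11c} adapted to absorb the additional dissipation $\kappa(-\Delta)^\gamma$ on the diagonal.
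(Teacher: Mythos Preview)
Your proposal is correct and follows essentially the same route as the paper: reduce to a tridiagonal recursion in the vertical Fourier index, choose the horizontal wave-vector in the anisotropic regime $k_2 \sim |k_1|^{1/2}$ so that the coupling $\sim am|k_1|$ dominates the dissipative diagonal $\sim \kappa|k_1|^{2\gamma}$, extract an eigenvalue growing like $|k_1|^{1-2\gamma}$, and then run the normalized-eigenfunction contradiction against \eqref{eq:thm:boundedness}. Two executional differences are worth noting. First, the paper takes the sine ansatz $\theta = e^{\sigma t}\sin(j^2 x_1)\sin(j x_2)\sum_{p\geq 1} c_p \sin(mp x_3)$ from the outset, which automatically enforces the zero-vertical-mean constraint (so the undefined mode $k_3=0$ never appears) and collapses your two-sided recursion to a one-sided one with a clean boundary condition at $p=1$; your two-sided system in fact decouples into this by symmetry once you impose $\psi_0=0$. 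Second, rather than truncating to a finite Jacobi block and controlling tails perturbatively, the paper bounds the semi-infinite continued fraction $F_2(\sigma)$ explicitly between $1/(\sigma_2\alpha_2)$ and an auxiliary function $G_2(\sigma)$, which pins the root $\sigma^{(j)}$ between two explicit expressions and yields the lower bound $\sigma^{(j)} \gtrsim j^2$ directly, without any perturbative step. Your Jacobi-matrix heuristic would work but is less sharp; the paper's version gives the bound in closed form and simultaneously produces the factorial decay $c_p = \mathcal{O}(C^p/(p-1)!^4)$ needed for real-analyticity of the eigenfunction.
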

As we mentioned above, in order to prove Theorem~\ref{thm:SN:5.1} we construct a sequence of eigenvalues $\sigma^{(j)}$ of the operator $L$, which diverge to $\infty$ as $j \rightarrow \infty$.
For this purpose, given any {\em fixed} integer $j \geq 1$ we seek a solution $\theta$ to \eqref{eq:linearequation} of the form
\begin{align} 
\theta(t,\xx) = e^{\sigma t} \sin(j^{2} x_{1}) \sin(j x_{2}) \sum_{p\geq 1} c_{p} \sin(m p x_{3}) \label{eq:SN:5.2}
\end{align}
where the sequence $c_{p}$ decays rapidly as $p \rightarrow \infty$. We shall construct such a solution $\theta$ with $\sigma$ real and positive, and in addition obtain bounds on the value of $\sigma$. Inserting \eqref{eq:SN:5.2} into \eqref{eq:SN:5.1} and using the definition of $M_{3}$, we obtain
\begin{align} 
\sigma_p \sum_{p\geq 1} c_{p} \sin(m p x_{3}) + \sum_{p\geq 1} \frac{c_{p}}{\alpha_{p}} \left( \sin ( m (p+1) x_{3}) + \sin ( m (p-1) x_{3}) \right) = 0, \label{eq:SN:5.3}
\end{align}
where we have denoted 
\begin{align} 
\sigma_{p} = \sigma + \kappa (j^{4} + j^{2} + (mp)^{2})^{\gamma} \label{eq:SN:5.6}
\end{align}
and
\begin{align} 
\alpha_{p} = \frac{2^{3} \Omega^{2} (mp)^{2} (j^{4} + j^{2} + (mp)^{2}) + 2 \mu^{2} j^{4}}{a \mu m j^{2} (j^{4} + j^{2})} \label{eq:SN:5.4}
\end{align}
for any integer $p\geq 1$ (note that $j$ is fixed). Here  $\mu = \beta^{2}/\eta$. An essential feature of the coefficients $\alpha_{p}$ is that they grow rapidly with $p$ as $p\rightarrow \infty$. Equation \eqref{eq:SN:5.4} gives the recurrence relation for the unknown coefficients $c_{p}$:
\begin{align} 
&\sigma_{p} c_{p} + \frac{c_{p+1}}{\alpha_{p+1}} + \frac{c_{p-1}}{\alpha_{p-1}} = 0, \mbox{ for } p\geq 2 \label{eq:SN:5.7}\\
&\sigma_{1} c_{1} + \frac{c_{2}}{\alpha_{2}} = 0, \mbox{ for } p = 1. \label{eq:SN:5.8}
\end{align}
Note that given any $\sigma>0$ (which then uniquely defines $\sigma_{p}$ for all $p\geq 1$ cf.~\eqref{eq:SN:5.6}) and given any $c_{1} > 0$, one may use the recursion relations \eqref{eq:SN:5.7}--\eqref{eq:SN:5.8} to solve for all $c_{p}$ with $p \geq 2$. However, only for suitable values of $\sigma$ do the $c_{p}$'s vanish sufficiently fast so that $\theta$ is $C^{\infty}$ smooth. In this direction we prove:
\begin{lemma} \label{lemma:SN:5.2}
 Let $\alpha_{p}$ be defined by \eqref{eq:SN:5.4}, where the positive integers $m$ and $j$ are fixed, and $\kappa, \mu, \Omega, a$ are fixed physical parameters. Also, define $c_{1} = \alpha_{1}$. Then there exists a real eigenvalue $\sigma = \sigma^{(j)}>0$, and rapidly decaying sequence $\{c_{p}\}_{p\geq 2}$ which satisfies \eqref{eq:SN:5.7}--\eqref{eq:SN:5.8}. Furthermore the lower bound 
 \begin{align} 
\sigma^{(j)} > \frac{a \mu m j^{2} (j^{4} + j^{2})}{2^{3} \Omega^{2} m^{2} (j^{4} + j^{2} + 4 m^{2}) + 2 \mu^{2} j^{4} } - \kappa (j^{4} + j^{2} + 4 m^{2})^{\gamma}
\end{align}
holds, and $c_{p} = \OO(C^{p}/ (p-1)!^{4})$ as $p \rightarrow \infty$, for some constant $C$ which depends on $\sigma^{(j)},j$ and the physical parameters.
\end{lemma}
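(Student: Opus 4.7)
The plan is to follow the classical continued fractions technique of \cite{MeshalkinSinai61,FriedlanderStraussVishik97}, in the spirit of its adaptation to the non-diffusive \MG case in \cite{FriedlanderVicol11c}. The first step is to convert the three-term recursion \eqref{eq:SN:5.7}--\eqref{eq:SN:5.8} into a scalar equation in $\sigma$. Setting $\eta_{p-1} = -c_p/(\alpha_p c_{p-1})$, a short manipulation of \eqref{eq:SN:5.7} yields the iteration
\[
\eta_{p-1} = \frac{1}{\alpha_{p-1}\alpha_p(\sigma_p - \eta_p)}, \qquad p \geq 2,
\]
while the endpoint relation \eqref{eq:SN:5.8} becomes $\eta_1 = \sigma_1$. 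Consequently, the existence of a rapidly decaying solution $\{c_p\}_{p\geq 1}$ of \eqref{eq:SN:5.7}--\eqref{eq:SN:5.8} is equivalent to the continued fraction identity
\[
\sigma_1(\sigma) = F(\sigma) := \cfrac{1}{\alpha_1\alpha_2\left(\sigma_2 - \cfrac{1}{\alpha_2\alpha_3\left(\sigma_3 - \cdots\right)}\right)}.
\]

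I would then establish that $F$ is well-defined, positive, continuous, and strictly decreasing on the half-line where all $\sigma_p$ remain positive. The key ingredient is the quartic growth $\alpha_p \asymp p^4$ as $p\to\infty$ read off from \eqref{eq:SN:5.4}, while the diffusive offsets $\sigma_p = \sigma + \kappa(j^4+j^2+(mp)^2)^\gamma$ grow only like $p^{2\gamma}$ with $\gamma \leq 1/2$. Hence the successive products $\alpha_{p-1}\alpha_p\sigma_p$ blow up as $p\to\infty$, which by a Seidel--Stern type convergence criterion for positive continued fractions ensures that the truncations $F_N(\sigma)$ converge uniformly on compact subsets of $\{\sigma : \sigma_p(\sigma)>0 \text{ for all } p\}$ and selects, among the two branches of the second-order recurrence, the unique decaying one. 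A downward induction on the truncation level, using precisely the same growth of $\alpha_p$, shows that all tails $\eta_p$ are strictly positive, hence
\[
F(\sigma) \geq \frac{1}{\alpha_1\alpha_2\,\sigma_2}.
\]

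Given this, the existence of an eigenvalue $\sigma^{(j)}$ and the quantitative lower bound follow by an intermediate value argument. As $\sigma \to +\infty$, $\sigma_1(\sigma) \to +\infty$ and $F(\sigma) \to 0^+$, so $\sigma_1 - F$ must vanish at some $\sigma^{(j)}>0$. At this $\sigma^{(j)}$ the inequality $\sigma_1 = F \geq 1/(\alpha_1\alpha_2\sigma_2)$ reduces to the two-mode condition $\sigma_1^{(j)}\sigma_2^{(j)} \geq 1/(\alpha_1\alpha_2)$; substituting the explicit expressions \eqref{eq:SN:5.4} for $\alpha_1$ and $\alpha_2$ and solving this inequality in $\sigma$ produces the stated estimate, in which the offset $\kappa(j^4+j^2+4m^2)^\gamma$ is exactly the diffusive part of $\sigma_2$. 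For the decay estimate, once $\sigma^{(j)}$ is fixed the iteration $\eta_{p-1} = 1/(\alpha_{p-1}\alpha_p(\sigma_p-\eta_p))$ furnishes $|c_p/c_{p-1}| = 1/(\alpha_{p-1}|\sigma_p - \eta_p|) \leq C/\alpha_{p-1}$ for all large $p$; iterating and using $\alpha_p \asymp p^4$ yields $|c_p| = \mathcal{O}(C^p/(p-1)!^4)$, as claimed.

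The main obstacle will be justifying the backward induction that establishes both the positivity and the convergence of the continued-fraction tails, and identifying the decaying rather than growing branch of the linear recursion as the one selected by $F$. Both rely delicately on the quartic growth of $\alpha_p$ dominating the at most $p^{2\gamma}$ growth of $\sigma_p$, and this is precisely the mechanism that degenerates for $\gamma>1/2$: in that range one can no longer push unstable eigenvalues to infinity as $j\to\infty$, consistently with the local well-posedness result of Section~\ref{sec:>1/2}.
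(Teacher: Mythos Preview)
Your approach is essentially the paper's --- the same continued-fraction reduction of Meshalkin--Sina\u{\i} type, the same two-mode inequality $\sigma_1\sigma_2 \geq 1/(\alpha_1\alpha_2)$, and the same decay mechanism $|c_p/c_{p-1}|\lesssim 1/\alpha_{p-1}$. Your normalization $\eta_{p-1}=-c_p/(\alpha_p c_{p-1})$ differs from the paper's $\eta_p = c_p\alpha_{p-1}/(c_{p-1}\alpha_p)$, but the two are equivalent up to the rescaling $\eta_{p-1}^{\text{yours}} = -\eta_p^{\text{paper}}/\alpha_{p-1}$.

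There is, however, a genuine gap in your intermediate value argument. You only verify one side: $\sigma_1-F \to +\infty$ as $\sigma\to+\infty$. You never establish that $\sigma_1(\sigma)-F(\sigma)<0$ somewhere, so the IVT does not fire and you have not localized $\sigma^{(j)}$ at all (in particular you have not shown $\sigma^{(j)}>0$). Your Seidel--Stern appeal gives convergence of the tails, and your backward induction gives positivity $\eta_p>0$, hence the lower bound $F\geq 1/(\alpha_1\alpha_2\sigma_2)$; but you need an \emph{upper} bound on $F$ as well to trap the root. The paper supplies this via the explicit comparison function
\[
G_p(\sigma)=\frac{\sigma_p\alpha_p-\sqrt{(\sigma_p\alpha_p)^2-4}}{2},
\]
the fixed point of $x\mapsto 1/(\sigma_p\alpha_p-x)$, and proves $1/(\sigma_2\alpha_2)<F_2(\sigma)<G_2(\sigma)$ for $\sigma_2\alpha_2>2$. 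Since the affine function $\sigma\mapsto\sigma_1\alpha_1$ crosses both the lower and upper envelopes, it must cross $F_2$ in between. You should either supply the missing sign change or adopt this comparison.

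A small correction to your closing paragraph: the convergence and positivity of the continued fraction do \emph{not} degenerate for $\gamma>1/2$. The products $\alpha_{p-1}\alpha_p\sigma_p$ blow up for every $\gamma$ since $\alpha_p\asymp p^4$ already dominates. What actually fails for $\gamma\geq 1/2$ is the \emph{subsequent} step (Lemma~\ref{lemma:SN:5.3}): the lower bound on $\sigma^{(j)}$ no longer diverges as $j\to\infty$, because the diffusive offset $\kappa(j^4+j^2+4m^2)^\gamma$ then grows at least as fast as the first term. The existence of an eigenvalue for each fixed $j$ is unaffected.
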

\begin{proof}[Proof of Lemma~\ref{lemma:SN:5.2}]
 We define
 \begin{align} 
\eta_{p} = \frac{c_{p} \alpha_{p-1}}{c_{p-1} \alpha_{p}} \label{eq:SN:5.9}
\end{align}
and write \eqref{eq:SN:5.7}--\eqref{eq:SN:5.8} in the form
\begin{align} 
& \sigma_{p} \alpha_{p} + \eta_{p+1} + \frac{1}{\eta_{p}} = 0, \qquad p \geq 2\label{eq:SN:5.10}\\
& \sigma_{1} \alpha_{1} + \eta_{2} =0, \qquad p=1 \label{eq:SN:5.11}.
\end{align}
Using \eqref{eq:SN:5.10} to write $\eta_{2}$ as an infinite continued fraction and equating with $\eta_{2}$ given by \eqref{eq:SN:5.11} gives the characteristic equation
\begin{align} 
\sigma_{1} \alpha_{1} = \frac{1}{ \sigma_{2} \alpha_{2} - \frac{1}{ \sigma_{3} \alpha_{3} - \frac{1}{ \sigma_{4} \alpha_{4} - {\dots}} } }  \label{eq:SN:5.12}.
\end{align}
Recalling the definition of $\sigma_{p}$ for $p\geq 1$ cf.~\eqref{eq:SN:5.6}, we observe that \eqref{eq:SN:5.12} is an equation with only one unknown, namely $\sigma$. 
For real values of $\sigma$ we define the infinite continued fraction $F_{p}(\sigma)$ by
\begin{align} 
F_{p}(\sigma) = \frac{1}{\sigma_{p} \alpha_{p} - \frac{1}{\sigma_{p+1} \alpha_{p+1} - \frac{1}{\sigma_{p+2} \alpha_{p+2} - \dots}}} \label{eq:SN:5.13}
\end{align}
and the 
function
\begin{align} 
G_{p}(\sigma) = \frac{\sigma_{p} \alpha_{p} - \sqrt{\sigma_{p}^{2} \alpha_{p}^{2} - 4}}{2} = \frac{2}{\sigma_{p} \alpha_{p} + \sqrt{\sigma_{p}^{2} \alpha_{p}^{2} -4 }} \label{eq:SN:5.14}
\end{align}
for all $p\geq 2$, and all $\sigma$ such that $\sigma_{2} \alpha_{2}> 2$. We note that due to the very rapid growth of the $\alpha_{p}$'s, for real $\sigma$ the continued fraction $F_{p}$ defines a function which is smooth except for a set of points such with $\sigma_{2} \alpha_{2} <2$. For the rest of the proof we will always assume that $\sigma$ is real such that $\sigma_{2} \alpha_{2} >  2$, which also implies that $\sigma_{p} \alpha_{p}>2$ for all $p\geq 2$.

We note that by construction $G_{p}$ satisfies
\begin{align} 
G_{p}(\sigma) = \frac{1}{\sigma_{p} \alpha_{p} - G_{p}(\sigma)} = \frac{1}{\sigma_{p} \alpha_{p} - \frac{1}{\sigma_{p} \alpha_{p} - \frac{1}{\sigma_{p} \alpha_{p} - \dots}}}. \label{eq:SN:5.15}
\end{align}
Since we have $\sigma_{p} \alpha_{p} \rightarrow \infty$ as $p \rightarrow \infty$, we have that $G_{p}(\sigma) \rightarrow 0$ as $p \rightarrow \infty$ for every fixed $\sigma$.
Also, we clearly have
\begin{align} 
G_{2}(\sigma) > G_{3}(\sigma) > G_{4}(\sigma) > \ldots \geq 0 \label{eq:SN:5.16}
\end{align}
and
\begin{align} 
G_{p+1}(\sigma) < G_{p}(\sigma) < \sigma_{p} \alpha_{p} \label{eq:SN:5.17}
\end{align}
for all $p\geq 2$. Hence, from \eqref{eq:SN:5.15} and \eqref{eq:SN:5.16} we obtain
\begin{align} 
G_{2}(\sigma) > \frac{1}{\sigma_{2} \alpha_{2} - G_{3}(\sigma)} > \frac{1}{\sigma_{2} \alpha_{2} - \frac{1}{\sigma_{3} \alpha_{3} - G_{4}(\sigma)}} > 0 \label{eq:SN:5.18}.
\end{align}
An inductive argument then gives
\begin{align} 
G_{2}(\sigma) >\frac{1}{\sigma_{2} \alpha_{2} - \frac{1}{\sigma_{3} \alpha_{3} - \frac{1}{\sigma_{4} \alpha_{4} - \dots} }} = F_{2}(\sigma) \geq 0 \label{eq:SN:5.19}
\end{align}
for all $\sigma$ such that $\sigma_{2} \alpha_{2} > 2$. Repeating this constructive argument we obtain that $G_{3}(\sigma) > F_{3}(\sigma) \geq 0$, and hence 
\begin{align} 
0< \sigma_{2} \alpha_{2} - G_{3}(\sigma) < \sigma_{2} \alpha_{2} - F_{3}(\sigma) < \sigma_{2} \alpha_{2}
\end{align}
so that 
\begin{align} 
F_{2}(\sigma)  = \frac{1}{\sigma_{2} \alpha_{2} - F_{3}(\sigma)} > \frac{1}{\alpha_{2} \sigma_{2}} \label{eq:SN:5.20}.
\end{align}

In summary, we have proven that $1/(\sigma_{2} \alpha_{2}) < F_{2}(\sigma) < G_{2}(\sigma)$ for all $\sigma$ such that $\sigma_{2} \alpha_{2} > 2$, and the straight line $\sigma_{1}\alpha_{1}$ intersects both the graph of $1/(\sigma_{2} \alpha_{2})$ and the graph of $G_{2}(\sigma)$ in this rage of $\sigma$. Hence, it follows that there exists a real solution $\sigma^{(j)}$ of the equation
\begin{align} 
F_{2}(\sigma^{(j)}) = \sigma_{1}^{(j)} \alpha_{1} \label{eq:SN:5.21},
\end{align}
with $\sigma_{2}^{(j)} \alpha_{2} > 2$, where we denote $\sigma_{p}^{(j)} = \sigma^{(j)} + \kappa (j^{4} + j^{2} + m^{2} p^{2})^{\gamma}$, for all $p\geq 2$. That is, $\sigma^{(j)}$ is a real positive solution of the characteristic equation \eqref{eq:SN:5.12}. Furthermore, due to \eqref{eq:SN:5.19} and \eqref{eq:SN:5.20} after a short calculation we obtain an upper and a lower bound on $\sigma^{(j)}$, namely
\begin{align} 
\frac{1}{\alpha_{1} \alpha_{2}} < \sigma_{1}^{(j)} \sigma_{2}^{(j)} < \frac{2}{\alpha_{1} \alpha_{2}} \label{eq:SN:5.23}.
\end{align}

For $\sigma^{(j)}$ satisfying \eqref{eq:SN:5.21}, we now construct the sequence $c_{p}$ which decays rapidly as $p \rightarrow \infty$. The recursion relation \eqref{eq:SN:5.10}--\eqref{eq:SN:5.11} uniquely defines the values of $\eta_{p}$. After letting $c_{1} = \alpha_{1}$, we define
\begin{align} 
c_{p} = \alpha_{p} \eta_{p} \eta_{p-1} \ldots \eta_{2} \label{eq:SN:5.24}
\end{align}
for all $p \geq 2$. This sequence satisfies \eqref{eq:SN:5.7}--\eqref{eq:SN:5.8} by construction. Furthermore, we observe that $\eta_{p} = - F_{p}(\sigma^{(j)})$. Repeating the arguments which gave \eqref{eq:SN:5.19} and \eqref{eq:SN:5.20} with $2$ replaced by $p$ gives
\begin{align} 
\frac{-2}{\sigma_{p}^{(j)} \alpha_{p}} < \eta_{p} < \frac{-1}{\sigma_{p}^{(j)} \alpha_{p}}. \label{eq:SN:5.26}
\end{align}
Moreover, from \eqref{eq:SN:5.4} we have that $\alpha_{p} = \OO(p^{4})$ as $p\rightarrow \infty$, and hence from \eqref{eq:SN:5.26} we obtain that $\eta_{p} = \OO(p^{-4})$. Thus it follows from \eqref{eq:SN:5.24} that $c_{p} \rightarrow 0$ as $p\rightarrow \infty$, and this convergence is very fast, namely $\OO(C^{p} (p-1)!^{-4})$ as $p \rightarrow \infty$, for some positive constant $C = C(\sigma^{(j)},\mu,\Omega,a,m,j)$. Therefore the solution $\theta^{(j)}(\xx,t)$ given by \eqref{eq:SN:5.2} with $\sigma$ replaced by $\sigma^{(j)}$, and $c_{p}$ as defined by \eqref{eq:SN:5.24}, lies in any Sobolev space, it is $C^{\infty}$ smooth, and even real-analytic.

We substitute for $\alpha_{1}$ and $\alpha_{2}$ from \eqref{eq:SN:5.4} into the bound \eqref{eq:SN:5.23} to obtain estimates on $\sigma^{(j)}$ given by 
\begin{align} 
\frac{a \mu m j^{2}(j^{4}+j^{2})}{2^{3} \Omega^{2} m^{2} (j^{4}+j^{2} + 4 m^{2}) + 2 \mu^{2} j^{4} }  - \kappa (j^{4} + j^{2} + 4m^{2})^{\gamma} < \sigma^{(j)}  \label{eq:SN:5.27}
\end{align}
and 
\begin{align} 
\sigma^{(j)} < \frac{2 a \mu m j^{2}(j^{4}+j^{2})}{2^{3} \Omega^{2} m^{2} (j^{4}+j^{2} + m^{2}) + 2 \mu^{2} j^{4} } - \kappa (j^{4} + j^{2} + m^{2})^{\gamma} \label{eq:SN:5.28}.
\end{align}

We recall the role of the constants in the above expressions \eqref{eq:SN:5.27}--\eqref{eq:SN:5.28} for the bounds on $\sigma^{(j)}$. 
The physical parameters are $\kappa$ the coefficient of thermal diffusivity, $\Omega$ the rotation rate of the system, $\mu$ is related to the underlying magnetic field, and $a$ is the magnitude of the steady {\em buoyancy} $\Th_{0}$. Expressions  \eqref{eq:SN:5.27} and \eqref{eq:SN:5.28} indicate wide ranges of the physical parameters for which there exist eigenvalues $\sigma^{(j)}$ which are positive as postulated in the construction of the solution to \eqref{eq:SN:5.1}. We note that in the Earth's fluid core $\kappa$ is very small.
\end{proof}

\begin{lemma} \label{lemma:SN:5.3}
Let $\gamma \in (0,1/2)$. Fix $a,\mu,\Omega,\kappa>0$ and an integer $m\geq 1$. There exists an integer $j_{0} = j_{0}(a,m,\mu,\Omega,\kappa,\gamma)$ such that for all $j \geq j_{0}$ there exists a $C^{\infty}$ smooth initial datum $\theta^{(j)}(0,\xx)$ with $\Vert \theta^{(j)}(0,\cdot) \Vert_{L^{2}}=1$ and a $C^{\infty}$ smooth solution $\theta^{(j)}(t,\xx)$ of the initial value problem associated with the linearized \MG equation \eqref{eq:linearequation}, such that 
\begin{align} 
\Vert \theta^{(j)}(t,\cdot) \Vert_{L^{2}} \geq \exp\left( j^{2} C_{*} t\right)
\end{align}
for all $t>0$, where $C_{*} = C_{*}(a,m,\mu,\Omega,\kappa,j_{0})$ is a positive constant defined in \eqref{eq:SN:5.29} below.
\end{lemma}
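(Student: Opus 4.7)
The plan is to build the solution $\theta^{(j)}(t,\xx)$ directly from the eigenfunction construction of Lemma~\ref{lemma:SN:5.2}, after normalization, and then extract the $\exp(j^{2} C_{*} t)$ lower bound by inspecting the explicit lower bound on $\sigma^{(j)}$ given there, in the regime $\gamma < 1/2$.

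First I would fix $j$ and invoke Lemma~\ref{lemma:SN:5.2} to obtain the eigenvalue $\sigma^{(j)}>0$ together with the rapidly decaying sequence $\{c_{p}\}_{p\geq 1}$, and define
\begin{align}
\widetilde \theta^{(j)}(t,\xx) = e^{\sigma^{(j)} t} \sin(j^{2} x_{1}) \sin(j x_{2}) \sum_{p\geq 1} c_{p} \sin(m p x_{3}).
\end{align}
Because the $c_{p}$ decay at the rate $\OO(C^{p}/(p-1)!^{4})$, the spatial factor is real-analytic, so $\widetilde \theta^{(j)}$ is $C^{\infty}$ smooth on $\TT^{3}\times [0,\infty)$ and solves \eqref{eq:linearequation} by construction (this is exactly what \eqref{eq:SN:5.2}--\eqref{eq:SN:5.8} encode). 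Normalizing, I set
\begin{align}
\theta^{(j)}(t,\xx) = \frac{\widetilde \theta^{(j)}(t,\xx)}{\Vert \widetilde \theta^{(j)}(0,\cdot)\Vert_{L^{2}}},
\end{align}
so that $\Vert \theta^{(j)}(0,\cdot)\Vert_{L^{2}}=1$. The eigenfunction structure makes the $L^{2}$ norm separate cleanly from the time factor, giving the identity $\Vert \theta^{(j)}(t,\cdot)\Vert_{L^{2}} = e^{\sigma^{(j)} t}$ for every $t\geq 0$.

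Next I would convert the lower bound from Lemma~\ref{lemma:SN:5.2} into a bound of the form $\sigma^{(j)} \geq C_{*} j^{2}$ for $j$ large. From
\begin{align}
\sigma^{(j)} > \frac{a \mu m j^{2} (j^{4} + j^{2})}{2^{3} \Omega^{2} m^{2} (j^{4} + j^{2} + 4 m^{2}) + 2 \mu^{2} j^{4} } - \kappa (j^{4} + j^{2} + 4 m^{2})^{\gamma},
\end{align}
the leading order of the first term as $j\to\infty$ is
\begin{align}
\frac{a\mu m\, j^{2}}{8\Omega^{2} m^{2} + 2 \mu^{2}},
\end{align}
while the diffusive penalty is $\OO(j^{4\gamma})$. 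Since $\gamma < 1/2$ gives $4\gamma < 2$, the penalty is $o(j^{2})$, and so for any constant
\begin{align}
C_{*} \in \Bigl(0,\frac{a\mu m}{8\Omega^{2} m^{2} + 2 \mu^{2}}\Bigr) \label{eq:SN:5.29}
\end{align}
there exists $j_{0} = j_{0}(a,m,\mu,\Omega,\kappa,\gamma)$ such that $\sigma^{(j)} \geq C_{*} j^{2}$ for all $j\geq j_{0}$. Combining with the identity for the $L^{2}$ norm yields the claimed bound $\Vert \theta^{(j)}(t,\cdot)\Vert_{L^{2}} \geq \exp(j^{2} C_{*} t)$.

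There is no genuine obstacle here, as the heavy lifting is contained in Lemma~\ref{lemma:SN:5.2}; the only point requiring care is the quantitative comparison of the growth term $\sim j^{2}$ against the diffusive suppression $\sim \kappa j^{4\gamma}$, which is precisely where the hypothesis $\gamma<1/2$ is used and where the result fails for $\gamma > 1/2$. I would also record that this argument breaks at $\gamma = 1/2$ unless $a\mu m/(8\Omega^{2} m^{2} + 2\mu^{2}) > \kappa$, i.e.\ unless the data is suitably large relative to $\kappa$, foreshadowing the large-data ill-posedness of Section~\ref{sec:=1/2}.
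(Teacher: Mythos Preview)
Your proposal is correct and follows essentially the same route as the paper: invoke Lemma~\ref{lemma:SN:5.2} to obtain the eigenpair, normalize so that $\Vert \theta^{(j)}(0,\cdot)\Vert_{L^2}=1$, read off $\Vert \theta^{(j)}(t,\cdot)\Vert_{L^2}=e^{\sigma^{(j)} t}$, and then use $\gamma<1/2$ so that the $\OO(j^{4\gamma})$ diffusive penalty is dominated by the $\sim j^{2}$ growth term. The only cosmetic difference is that the paper pins down $C_{*}$ as the explicit value of the lower bound divided by $j_{0}^{2}$ at the threshold $j_{0}$, whereas you (equivalently and perhaps more cleanly) allow $C_{*}$ to be any constant strictly below the asymptotic limit $a\mu m/(8\Omega^{2} m^{2}+2\mu^{2})$.
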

\begin{proof}[Proof of Lemma~\ref{lemma:SN:5.3}]
For any $j \geq 1$,  Lemma~\ref{lemma:SN:5.2} guarantees the existence of an eigenvalue $\sigma^{(j)}$ of the linear \MG operator $L$, with associated $C^{\infty}$ smooth eigenfunction $\theta^{(j)}(0,\xx)$ given by letting $t=0$ in \eqref{eq:SN:5.2}. Then $\theta^{(j)}(t,\xx) = \exp(\sigma^{(j)} t ) \theta^{(j)}(0,\cdot)$ is a solution of \eqref{eq:linearequation}. It is clear that one may normalize $\theta^{(j)}(0,\cdot)$ to have 
$L^{2}$ norm equal to $1$, and hence $\Vert \theta^{(j)}(t,\cdot) \Vert_{L^{2}} = \exp(\sigma^{(j)} t)$.

The lemma is then proven if we pick $j$ large enough so that $\sigma^{(j)} \geq j^{2} C_{*}$ for some positive constant $C_{*}$ (independent of $j$). This is guaranteed by the lower bound \eqref{eq:SN:5.27} for $\sigma^{(j)}$, if we pick $j \geq j_{0}$, where $j_{0}$ is a large enough fixed integer such that
\begin{align} 
C_{*} = \frac{a \mu m (j_{0}^{4} + j_{0}^{2})}{2^{3} \Omega^{2} m^{2} (j_{0}^{4} + j_{0}^{2} + 4 m^{2}) + 2\mu^{2} j_{0}^{4}} - \frac{\kappa}{j_{0}^{2}} (j_{0}^{4}+j_{0}^{2} + 4m^{2})^{\gamma} > 0. \label{eq:SN:5.29}
\end{align}
  Note that when $\gamma \in(0,1/2)$ such a $j_{0}$ always exists, independently of the size of the physical parameters.
\end{proof}

We now have all necessary ingredients to conclude the proof of Theorem~\ref{thm:SN:5.1}.
\begin{proof}[Proof of Theorem~\ref{thm:SN:5.1}]
Let $T>0$ and $K>0$ be arbitrary, and let $Y \subset L^{2}$ be a Sobolev space. Pick an integer $m\geq 1$, and an amplitude $a>0$. For these fixed $a,m$ and physical parameters $\mu,\Omega,\kappa,\gamma>0$, define $j_0$ and $C_*$ as in Lemma~\ref{lemma:SN:5.3}. 
For any $j \geq j_0$, Lemma~\ref{lemma:SN:5.3} guarantees that there exists a $C^\infty$ smooth initial condition $\theta^{(j)}(0,\xx)$ which we re-normalize to have $\Vert \theta^{(j)}(0,\cdot) \Vert_{Y} =1$, such that the associated solution $\theta^{(j)}(t,\xx)$ of the Cauchy problem \eqref{eq:linearequation}--\eqref{eq:steadystatedef} satisfies 
\begin{align} 
\Vert \theta^{(j)}(t,\cdot) \Vert_{L^2} \geq \exp( j^2 C_* t) \Vert \theta^{(j)}(0,\cdot) \Vert_{L^2} \label{eq:SN:thm:lowerbound}
\end{align}
for all $t>0$. We note that this solution to the linear equation is the unique\footnote{The proof of uniqueness of finite energy solutions to the linear equation follows from a representation of the solution as a Fourier series (see \cite{FriedlanderVicol11c}, Proposition~2.8).} solution in $L^\infty(0,T;L^2)$. We now claim that there exists a sufficiently large $j_* \geq j_0$ such that 
\begin{align} 
\Vert \theta^{(j_*)}(T/2,\cdot) \Vert_{L^2} \geq 2  K \label{eq:SN:thm:linear}
\end{align}
which would then conclude the proof of the Theorem, since $\Vert \theta^{(j_*)}(0,\cdot) \Vert_{Y} =1$. After a short calculation it is clear that \eqref{eq:SN:thm:linear} follows from \eqref{eq:SN:thm:lowerbound} if we manage to prove that 
\begin{align} 
 \exp\left( \frac{j_*^2 C_* T}{2}\right)  \Vert \theta^{(j_*)}(0,\cdot) \Vert_{L^2} \geq 2 K. \label{eq:SN:thm:suff:cond}
\end{align}
But  from the definition of $\theta^{(j_*)}$ cf.~\eqref{eq:SN:5.2} above, we see that $\Vert \theta^{(j_*)}(0,\cdot) \Vert_{L^2} \geq c_1 = \alpha_1 \geq 1/ (C j_*^2)$, for some sufficiently large constant $C$, cf.~\eqref{eq:SN:5.4} above. The fact that for every fixed $T>0$ and $C_*>0$ the sequence $\exp(j^2 C_* T/2)/ (C j^2)$ diverges as $j\rightarrow \infty$, shows that there exits some sufficiently large $j_*$ such that \eqref{eq:SN:thm:suff:cond} holds, thereby concluding the proof of the theorem.
\end{proof}

\subsection{Non-linear ill-posedness in Sobolev spaces} \label{eq:sec:non-linear:ill} 
Having established that the {\em linearized} \MG are ill-posed in $L^2$, we now turn to address the Hadamard ill-posedness of the {\em full nonlinear} \MG equations. The ill-posedness of non-linear partial differential equations may have different sources, of varying degree of gravity: finite time blow-up, non-uniqueness, or non-smoothness of the solution map, to name a few. As in \cite{FriedlanderVicol11c} for the case $\kappa=0$, here we show that the fractionally diffusive \MG equations, with $0 <\gamma < 1/2$ do not posses a Lipschitz continuous solution map.
Recall the definition of Lipschitz well-posedness for the nonlinear equation (see~\cite[Definition 1.1]{GuoNguyen10}, or \cite[Definition 2.9]{FriedlanderVicol11c}):

\begin{definition}[\bf Lipschitz local well-posedness] \label{def:well}
Let $Y \subset X \subset W^{1,4}$ be Banach spaces. The Cauchy problem for the \MG equation
\begin{align}
&\partial_t \Th + \Ub \cdot \nabla \Th + \kappa (-\Delta)^\gamma \Th = S  \label{eq:nonlinear:ill:1}\\
&\nabla \cdot \Ub =0, \  U_{j} =  M_{j} \Th \label{eq:nonlinear:ill:2}
\end{align}is locally Lipschitz $(X,Y)$ {\em well-posed}, if there exist continuous functions $T,K: [0,\infty)^{2} \rightarrow (0,\infty)$,  the time of existence and the Lipschitz constant, so that for every pair of initial data $\Th^{(1)}(0,\cdot), \Th^{(2)}(0,\cdot) \in Y$ there exist unique solutions $\Th^{(1)}, \Th^{(2)} \in L^{\infty}(0,T;X)$ of the initial value problem associated to  \eqref{eq:nonlinear:ill:1}--\eqref{eq:nonlinear:ill:2}, that additionally satisfy
\begin{align}
\Vert \Th^{(1)}(t,\cdot) -\Th^{(2)}(t,\cdot) \Vert_{X} \leq K \Vert \Th^{(1)}(0,\cdot) - \Th^{(2)}(0,\cdot) \Vert_{Y}\label{eq:def:well}
\end{align}
for every $t\in [0,T]$, where $T = T(\Vert \Th^{(1)}(0,\cdot) \Vert_{Y}, \Vert \Th^{(2)}(0,\cdot)\Vert_{Y})$ and $K  = K (\Vert \Th^{(1)}(0,\cdot)\Vert_{Y},\Vert \Th^{(2)}(0,\cdot)\Vert_{Y})$.
\end{definition}
\begin{remark}
 Clearly the time of existence $T$ and the Lipschitz constant $K$ also depend on $\Vert S \Vert_{L^\infty(0,\infty;Y)}$, and on $\kappa$, but we have omitted this dependence in Definition~\ref{def:well} since it is the same for both solutions $\Th^{(1)}$ and $\Th^{(2)}$.
\end{remark}
\begin{remark}
 If $\Th^{(2)}(t,\cdot)=0$ and $X=Y$, Definition~\ref{def:well} recovers the usual definition of local well-posedness with a continuous solution map. However, Defintion~\ref{def:well} allows the solution map to lose regularity, which is usually needed in order to obtain Lipschitz continuity of the solution map for first order quasi-linear equations. Hence, the typical pairs of spaces $(X,Y)$ that we have in mind here are $X = H^{s}$, and  $Y=H^{s+1}$, with $s>1 + 3/4$. 
 \end{remark}

The main result of this section is the following theorem.

\begin{theorem}[\bf Nonlinear ill-posedness in Sobolev spaces] \label{thm:nonlinear:ill}
The \MG equations \eqref{eq:nonlinear:ill:1}--\eqref{eq:nonlinear:ill:2}, with $\gamma \in (0,1/2)$ are locally Lipschitz $(X,Y)$ {\em ill-posed} in Sobolev spaces $Y\subset X$ embedded in $W^{1,4}(\TT^3)$, in the sense of Definition~\ref{def:well}.
\end{theorem}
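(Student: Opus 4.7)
The plan is to reduce non-linear ill-posedness to the linear ill-posedness established in Lemma~\ref{lemma:SN:5.3} through a now-standard contradiction/bootstrap argument (as in \cite{FriedlanderVicol11c,GuoNguyen10,GerardVaretDormy10}). I would argue by contradiction, assuming that \eqref{eq:nonlinear:ill:1}--\eqref{eq:nonlinear:ill:2} is locally Lipschitz $(X,Y)$ well-posed in the sense of Definition~\ref{def:well}. Take $\Th^{(1)}(0,\cdot) = \Theta_{0} = a\sin(m x_{3})$ together with the source $S = \kappa a m^{2\gamma}\sin(m x_{3})$ (which renders $\Theta_{0}$ a steady state, since $\hat M_{i}(0,0,\pm m)=0$ directly from \eqref{eq:M:1}--\eqref{eq:M:3}), and for each $j\geq j_{0}$ and $\epsilon \in (0,1]$ set $\Th^{(2)}(0,\cdot) = \Theta_{0} + \epsilon\,\theta^{(j)}(0,\cdot)$, where $\theta^{(j)}$ is the unstable eigenfunction from Lemma~\ref{lemma:SN:5.3}, rescaled so that $\|\theta^{(j)}(0,\cdot)\|_{Y}=1$. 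The perturbation $w = \Th^{(2)}-\Th^{(1)}$ then satisfies
\begin{align*}
  \partial_{t} w - Lw = -\MM[w]\cdot\nabla w,\qquad w(0,\cdot) = \epsilon\,\theta^{(j)}(0,\cdot),
\end{align*}
where $L$ is the linear operator from \eqref{eq:L:def}. By the Lipschitz hypothesis, applied for $\epsilon \leq 1$, one gains a fixed interval $[0,T_{0}]$ and a constant $K_{0}$, both depending only on $\|\Theta_{0}\|_{Y}$, such that $\|w(t,\cdot)\|_{X}\leq K_{0}\epsilon$ on $[0,T_{0}]$.

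I then compare $w$ with the explicit linear flow $\tilde\theta(t,\xx) = \epsilon\,e^{\sigma^{(j)} t}\theta^{(j)}(0,\xx)$, so that the residual $r = w - \tilde\theta$ vanishes at $t=0$ and obeys $\partial_{t} r - Lr = -\MM[w]\cdot\nabla w$. An $L^{2}$ energy estimate on this equation produces three contributions: the dissipation $-\kappa\|\Lambda^{\gamma}r\|_{L^{2}}^{2}$, which is simply discarded; the non-symmetric zeroth order term $-\langle \MM[r]\cdot\nabla\Theta_{0},r\rangle$, bounded by $C(a,m)\|\nabla r\|_{L^{2}}\|r\|_{L^{2}}$ using $|\hat M_{i}(\kk)|\leq C|\kk|$ from \eqref{eq:unbounded:symbol}; and the quadratic forcing, for which the $L^{p}$-boundedness of $\Lambda^{-1}M_{i}$ and the embedding $X\hookrightarrow W^{1,4}$ yield
\begin{align*}
  \|\MM[w]\cdot\nabla w\|_{L^{2}}\leq \|\MM[w]\|_{L^{4}}\|\nabla w\|_{L^{4}}\leq C\|w\|_{W^{1,4}}^{2}\leq CK_{0}^{2}\epsilon^{2}.
\end{align*}
Using $\|\nabla r\|_{L^{2}}\leq \|\nabla w\|_{L^{2}}+\|\nabla \tilde\theta\|_{L^{2}}$, the Lipschitz bound on $w$, and the explicit form of $\tilde\theta$, Gr\"onwall's inequality then produces a bound of the schematic form $\|r(t,\cdot)\|_{L^{2}}\leq C_{j}\epsilon^{2} e^{2\sigma^{(j)}t}$, which is negligible compared with $\|\tilde\theta(t,\cdot)\|_{L^{2}}\sim \epsilon\,e^{\sigma^{(j)}t}$ on the short interval $[0,t_{*}]$ with $t_{*}\sim (\sigma^{(j)})^{-1}\log(c/\epsilon)$.

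Since on $[0,t_{*}]$ one therefore has $\|w(t,\cdot)\|_{L^{2}}\geq \tfrac{1}{2}\|\tilde\theta(t,\cdot)\|_{L^{2}}$, and $\tilde\theta(t,\cdot)$ is a finite Fourier series whose $X$ norm is comparable to its $L^{2}$ norm up to a $j$-dependent but $\epsilon$-independent constant, one concludes $\|w(t_{*},\cdot)\|_{X}\gtrsim c_{j}\,\epsilon\,e^{\sigma^{(j)}t_{*}}$. The divergence $\sigma^{(j)}\to\infty$ from Lemma~\ref{lemma:SN:5.3}, which crucially requires $\gamma\in(0,1/2)$, lets one choose $j$ so large and $\epsilon$ so small that this lower bound exceeds $K_{0}\epsilon$, contradicting the Lipschitz estimate. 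The principal obstacle is the control of the residual $r$: because $L$ itself possesses arbitrarily large unstable eigenvalues, no favorable semigroup bound for $e^{tL}$ is available, and one is forced to close the estimate on $r$ by feeding the a priori Lipschitz bound on $w$ back into the quadratic source, then tune $(j,\epsilon,t_{*})$ simultaneously so that the exponential growth of $\tilde\theta$ outpaces both the quadratic residual $r$ and the Lipschitz constant $K_{0}$.
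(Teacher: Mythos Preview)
Your overall setup---contradiction, the steady state $\Theta_{0}$, perturbed data $\Theta_{0}+\epsilon\,\theta^{(j)}(0,\cdot)$, and the exploitation of $\|w\|_{X}\le K_{0}\epsilon$ to bound the quadratic term---is exactly the scaffold the paper uses. The genuine gap is the claimed bound $\|r(t,\cdot)\|_{L^{2}}\le C_{j}\epsilon^{2}e^{2\sigma^{(j)}t}$ on the residual $r=w-\tilde\theta$. Your $L^{2}$ energy inequality reads, schematically,
\[
\frac{d}{dt}\|r\|_{L^{2}}\ \le\ C(a,m)\,\|\nabla r\|_{L^{2}}\ +\ CK_{0}^{2}\epsilon^{2},
\]
because the non-dissipative piece of $L$, namely $-M_{3}r\,\partial_{3}\Theta_{0}$, is genuinely order one. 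Your only handle on $\|\nabla r\|_{L^{2}}$ is the external bound $\|\nabla r\|_{L^{2}}\le\|\nabla w\|_{L^{2}}+\|\nabla\tilde\theta\|_{L^{2}}\le CK_{0}\epsilon+D_{j}\epsilon\,e^{\sigma^{(j)}t}$, with $D_{j}=\|\nabla\theta^{(j)}(0,\cdot)\|_{L^{2}}$. Integrating, the dominant contribution is $C(a,m)D_{j}\epsilon\,e^{\sigma^{(j)}t}/\sigma^{(j)}$, which is of order $\epsilon$, not $\epsilon^{2}$. Worse, since the eigenfunction is localized near frequency $j^{2}$ one has $D_{j}\sim j^{2}\|\theta^{(j)}(0,\cdot)\|_{L^{2}}$ while $\sigma^{(j)}\sim C_{*}j^{2}$, so $\|r\|_{L^{2}}\lesssim (C(a,m)/C_{*})\|\tilde\theta\|_{L^{2}}$ with a constant that is \emph{fixed} by $\Omega,\mu,m$ and is not small. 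The residual is therefore of the same order as the signal, and the comparison $\|w\|_{L^{2}}\ge\tfrac12\|\tilde\theta\|_{L^{2}}$ fails. There is no Gr\"onwall argument that produces your $\epsilon^{2}e^{2\sigma^{(j)}t}$: that would require either a semigroup bound $\|e^{tL}\|_{L^{2}\to L^{2}}\le e^{2\sigma^{(j)}t}$ or a form bound $\langle Lr,r\rangle\le 2\sigma^{(j)}\|r\|_{L^{2}}^{2}$, and neither holds precisely because $L$ has arbitrarily large eigenvalues.

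The paper sidesteps this obstruction entirely by a compactness argument rather than a direct tracking estimate. One sets $\psi^{\epsilon}=(\Th^{\epsilon}-\Theta_{0})/\epsilon$, which solves $\partial_{t}\psi^{\epsilon}=L\psi^{\epsilon}+\epsilon N(\psi^{\epsilon})$, and uses the assumed Lipschitz bound to get $\psi^{\epsilon}$ uniformly bounded in $L^{\infty}(0,T;X)$; since $X\hookrightarrow H^{1}$ and $\gamma<1/2$, both $L\psi^{\epsilon}$ and $\epsilon N(\psi^{\epsilon})$ are uniformly bounded in $L^{2}$, hence so is $\partial_{t}\psi^{\epsilon}$. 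One then passes to the weak-$*$ limit $\psi^{\epsilon}\to\psi$ (strong in $L^{2}$) as $\epsilon\to0$, so that $\psi$ solves the \emph{linear} problem $\partial_{t}\psi=L\psi$, $\psi(0)=\psi_{0}$, and inherits $\|\psi(t)\|_{L^{2}}\le K_{0}$ on $[0,T]$. This contradicts Theorem~\ref{thm:SN:5.1} directly, by choosing $\psi_{0}=\theta^{(j)}(0,\cdot)$ with $j$ large. The point is that no energy estimate on a residual against the unbounded operator $L$ is ever needed; the Lipschitz hypothesis itself supplies all the compactness, and the nonlinearity simply vanishes in the limit.
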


 For the purpose of our ill-posedness result, we shall let $\Th^{(2)}(t,\xx)$ be the steady state $\Th(x_{3})$ introduced earlier in \eqref{eq:steadystatedef}. We consider $X$ to be a Sobolev space with high enough regularity so that $\partial_{t} \Th \in L^{\infty}(0,T;L^{2})$, which implies that $\Th$ is weakly continuous on $[0,T]$ with values in $X$, making sense of the initial value problem associated to \eqref{eq:nonlinear:ill:1}--\eqref{eq:nonlinear:ill:2}. The proof of Theorem~\ref{thm:nonlinear:ill} follows from the strong {\em linear} ill-posedness obtained in Theorem~\ref{thm:SN:5.1} and a fairly generic perturbative argument (cf.~\cite[Thorem 2]{Renardy09} or~\cite[pp.~183]{Tao06}). The proof of Theorem~\ref{thm:nonlinear:ill} follows the lines of the proof for the non diffusive problem given in \cite{FriedlanderVicol11c}. 

\begin{proof}[Proof of Theorem~\ref{thm:nonlinear:ill}]
Since the Sobolev space $X$ embeds in $H^{1}$, and $\gamma \in (0,1/2)$, the linearized operator $L \Th = - M_{3}\Th\, \partial_{3} \Th_{0} - \kappa (-\Delta)^\gamma \Th$ maps $X$ continuously into $L^{2}$, and since $X \subset W^{1,4}$, the nonlinearity $N\Th = - M_{j}\Th\, \partial_{j} \Th$ is bounded as  $\Vert N \Th \Vert_{L^{2}} \leq \Vert \nabla \Th \Vert_{L^{4}}^{2} \leq C \Vert \Th \Vert_{X}^{2}$, for some  constant $C>0$. 

Fix the steady state $\Th_{0}(x_{3}) \in Y$, as given by \eqref{eq:steadystatedef}. Also, fix a smooth function $\psi_{0} \in Y$, normalized to have $\Vert \psi_{0} \Vert_{Y} = 1$, to be chosen precisely later. The proof is by contradiction. Assume that the Cauchy problem for the  \MG equation \eqref{eq:nonlinear:ill:1}--\eqref{eq:nonlinear:ill:2} is Lipschitz locally well-posed in $(X,Y)$. Consider $\Th^{(2)}(0,\xx) = \Th_{0}(x_{3})$, so that $\Th^{(2)}(t,\xx) = \Th_{0}(x_{3})$ for any $t>0$. Also let
\begin{align*}
\Th^{\epsilon}(0,\xx) = \Th_{0}(x_{3})  + \epsilon \psi_{0}(\xx),
\end{align*} for every $0 < \epsilon < \Vert \Th_{0} \Vert_{Y}$. To simplify notation we write $\Th^{\epsilon}$ instead of $\Th^{(1,\epsilon)}$. By Definition~\ref{def:well}, for every $\epsilon$ as before there exists a positive time $T = T (\Vert \Th_{0}\Vert_{Y},\Vert \Th^{\epsilon}\Vert_{Y})$ and a positive Lipschitz constant $ K = (\Vert \Th_{0}\Vert_{Y},\Vert \Th^{\epsilon}\Vert_{Y})$ such that by \eqref{eq:def:well} and the choice of $\psi_{0}$ we have
\begin{align}
\Vert \Th^{\epsilon}(t,\cdot) - \Th_{0}(\cdot) \Vert_{X} \leq K \epsilon \label{eq:Lip:bound}
\end{align}for all $t\in [0,T]$. We note that since $\Vert \Th^{\epsilon}(0,\cdot) \Vert_{Y} \leq \Vert \Th_{0} \Vert_{Y} + \epsilon \leq 2 \Vert \Th_{0} \Vert_{Y}$, due to the continuity of $T$ and $K$ with respect to the second coordinate, we may choose $K = K(\Vert \Th_{0} \Vert_{Y}) >0$ and $T =T (\Vert \Th_{0} \Vert_{Y})>0$ independent of $\epsilon \in (0,\Vert \Th_{0} \Vert_{Y})$, such that  \eqref{eq:Lip:bound} holds on $[0,T]$.

Writing $\Th^{\epsilon}$ as an $\OO(\eps)$  perturbation of $\Theta_0$, we define 
\begin{align}
\psi^{\epsilon}(t,\xx) = \frac{\Th^{\epsilon}(t,\xx) - \Theta_0(x_3)}{\eps},
\end{align}
for all $t \in [0,T]$ and all $\epsilon$ as before.  It follows from \eqref{eq:Lip:bound} that $ \psi^\epsilon$   is uniformly bounded with respect to $\epsilon$ in $L^{\infty}(0,T;X)$. Therefore, there exists a function $\psi$, the weak-$*$ limit of $\psi^{\epsilon}$ in $L^\infty(0,T;X)$. Note that $\psi^{\epsilon}$ solves the Cauchy problem
  \begin{align}
    \partial_t \psi^\epsilon &= L \psi^\epsilon + {\epsilon} N(\psi^\epsilon) \label{eq:psieps}\\
    &\psi^{\epsilon}(0,\cdot) = \psi_{0}.
  \end{align}
  Due to the choice of $X$, we have the bound
  \begin{align}
  \Vert N \psi^{\epsilon} \Vert_{L^{2}} \leq C \Vert \psi^{\epsilon} \Vert_{X}^{2} \leq C K^{2} \label{eq:Nbound},
  \end{align}and from \eqref{eq:psieps} we obtain that $\partial_{t} \psi^{\epsilon}$ is uniformly bounded with respect to $\epsilon$ in $L^{\infty}(0,T;L^{2})$. Therefore the convergence $\psi^{\epsilon} \rightarrow \psi$ is strong when measured in $L^{2}$. Sending $\epsilon$ to $0$ in \eqref{eq:psieps}, and using \eqref{eq:Nbound}, it follows that
  \begin{align*}
    \partial_t \psi &= L \psi\\
    \psi(0,\cdot)&= \psi_{0}
  \end{align*}
  holds in $L^\infty(0,T;L^2)$, and this solution is unique since the problem is now linear. In addition, the solution $\psi$ inherits from \eqref{eq:Lip:bound} the upper bound
  \begin{align}
  \Vert \psi(t,\cdot) \Vert_{L^{2}} \leq K \label{eq:Lip:2}
  \end{align} for all $t\in[0,T]$. But this is a contradiction with Theorem~\ref{thm:SN:5.1}. Due to the existence of eigenfunctions for the linearized operator with arbitrarily large eigenvalues, one may choose $\psi_{0}$ (as in Lemma~\ref{lemma:SN:5.3}) to yield a large enough eigenvalue so that in time $T/2$ the solution grows to have $L^{2}$ norm larger than $2K$ (similarly to the proof of Theorem~\ref{thm:SN:5.1}), therefore contradicting \eqref{eq:Lip:2}.
\end{proof}

\section{The regime $\gamma = 1/2$: a dichotomy in terms of the size of the data} \label{sec:=1/2}

If the initial data is small with respect to $\kappa$, then one may use energy estimates to show that there exists a unique global smooth solution evolving from this data (cf.~Section~\ref{sec:=1/2_small_data} below). However, the proof does not apply for the case of large data, not even to prove the local existence of solutions. In the case of large data, we may construct a steady state such that solutions evolving from initial data which is arbitrarily close to this steady state diverge from it at an arbitrarily large rate, for positive time (cf.~Section~\ref{sec:=1/2_large_data} below), i.e. the equations are Hadamard ill-posed.

\subsection{Ill-posedness for $\gamma=1/2$ and large data} \label{sec:=1/2_large_data} 
In Section~\ref{sec:<1/2} above we have shown that for $\gamma<1/2$ the \MG equations are Hadamard ill-posed in Sobolev spaces, in the sense that there is no Lipschitz continuous solution map (see Theorem~\ref{thm:nonlinear:ill}). The main ingredient in the proof of ill-posedness for $\gamma \in (0,1/2)$ was to show that the linearized \MG operator
\begin{align}
L \theta = - M_3 \theta \partial_3 \Th_0 - \kappa (-\Delta)^\gamma \theta
\end{align}
where
\begin{align}
\Th_0 = a \sin(m x_3)
\end{align}
is a steady state associated to the source term $S =\kappa a m^{2\gamma} \sin(m x_3)$, has a sequence of eigenvalues $\sigma^{(j)}$ which diverge to infinity as $j \rightarrow \infty$ (see Lemmas~\ref{lemma:SN:5.2} and \ref{lemma:SN:5.3}). 

We emphasize that it is {\em only} in the proof of Lemma~\ref{lemma:SN:5.3} where $\gamma < 1/2$, rather than $\gamma \leq 1/2$ is used. Indeed, in order to prove that given {\em any} real $a>0$ and any integer $m \geq 1$ there exists some sufficiently large integer $j_0$ such that the constant $C_*$ of \eqref{eq:SN:5.29} is strictly positive, $\gamma < 1/2$ is both necessary and sufficient. In the case $\gamma=1/2$, we can prove the following alternative to Lemma~\ref{lemma:SN:5.3}, which states that only if $a$ is sufficiently large with respect to $\kappa$ (in terms of $\mu$ and $\Omega$), then one obtains a sequence of eigenvalues which diverge to $\infty$. Namely:

\begin{lemma} \label{lemma:SN:5.4}
Let $\gamma = 1/2$. Fix $a,\mu,\Omega$, and $m$. For values of $a$ such that
\begin{align} 
\kappa < \frac{a \mu m}{2^5 \Omega^2 m^2 + 2 \mu^2} \label{eq:SN:cond:a}
\end{align}
and all integers $j \geq m$, the statement of Lemma~\ref{lemma:SN:5.3} holds with the constant $C_*$ given by
\begin{align} 
C_*  = \frac{a \mu m}{2^5 \Omega^2 m^2 + 2 \mu^2} - \kappa \label{eq:SN:5.30}.
\end{align}
\end{lemma}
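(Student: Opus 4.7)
The plan is to adapt the proof of Lemma~\ref{lemma:SN:5.3} to the endpoint value $\gamma=1/2$, keeping careful track of the $\kappa$-dependence. The critical point is that Lemma~\ref{lemma:SN:5.2} is valid for \emph{any} positive $\gamma$ (its proof uses only $\sigma_2\alpha_2>2$ and the continued-fraction machinery, both insensitive to $\gamma$). Consequently, for $\gamma=1/2$ we still obtain a real positive eigenvalue $\sigma^{(j)}$ together with the explicit lower bound
\begin{align*}
\sigma^{(j)} \;>\; \frac{a\mu m\,j^{2}(j^{4}+j^{2})}{2^{3}\Omega^{2}m^{2}(j^{4}+j^{2}+4m^{2})+2\mu^{2}j^{4}}\;-\;\kappa\,(j^{4}+j^{2}+4m^{2})^{1/2}.
\end{align*}
Given this, the construction of $\theta^{(j)}(t,\xx)=e^{\sigma^{(j)}t}\theta^{(j)}(0,\xx)$, the $L^{2}$-normalization of the initial datum, and the $L^{2}$-growth estimate $\|\theta^{(j)}(t,\cdot)\|_{L^{2}}=e^{\sigma^{(j)}t}$ are identical to what is done in Lemma~\ref{lemma:SN:5.3}. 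The entire task therefore reduces to verifying that $\sigma^{(j)}\geq j^{2}C_{*}$ for every integer $j\geq m$, with $C_{*}$ as in \eqref{eq:SN:5.30}.

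After dividing the inequality above by $j^{2}$, the required bound amounts to
\begin{align*}
\frac{a\mu m(j^{4}+j^{2})}{2^{3}\Omega^{2}m^{2}(j^{4}+j^{2}+4m^{2})+2\mu^{2}j^{4}}\;-\;\kappa\,\frac{(j^{4}+j^{2}+4m^{2})^{1/2}}{j^{2}}\;\geq\;\frac{a\mu m}{2^{5}\Omega^{2}m^{2}+2\mu^{2}}\;-\;\kappa
\end{align*}
for all $j\geq m\geq 1$. I would treat the two terms separately using the elementary inequalities available for $j\geq m\geq 1$: namely $4m^{2}\leq 4j^{2}$ and $j^{2}\leq j^{4}$, which together bound $(j^{4}+j^{2}+4m^{2})$ by a moderate constant multiple of $(j^{4}+j^{2})$ and by a constant multiple of $j^{4}$. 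Applied to the inviscid fraction, these estimates (together with $j^{4}\leq j^{4}+j^{2}$) yield the uniform lower bound $a\mu m/(2^{5}\Omega^{2}m^{2}+2\mu^{2})$; applied to the diffusive term, they control $(j^{4}+j^{2}+4m^{2})^{1/2}/j^{2}$ by a quantity that tends to $1$ as $j\to\infty$ and whose excess over $1$ is absorbed by the corresponding slack in the inviscid fraction.

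The main obstacle is precisely this bookkeeping: the excess in the dissipative factor $(j^{4}+j^{2}+4m^{2})^{1/2}/j^{2}$ above $1$ and the corresponding gap of the inviscid fraction above its limit both vanish only as $j\to\infty$, and one must confirm that for \emph{every} finite $j\geq m$ these two corrections offset each other in the right direction, so that the sharp constant $C_{*}$ is actually attained. The structural reason this works is that the unfavorable term $\kappa(j^{4}+j^{2}+4m^{2})^{1/2}/j^{2}$ no longer decays to $0$ as $j\to\infty$ (in contrast to the case $\gamma<1/2$ treated in Lemma~\ref{lemma:SN:5.3}), but tends exactly to $\kappa$. Thus the necessity of the smallness condition \eqref{eq:SN:cond:a} is transparent: it is precisely the hypothesis under which the limiting value of the lower bound, namely $a\mu m/(2^{5}\Omega^{2}m^{2}+2\mu^{2})-\kappa$, is strictly positive, so that $\sigma^{(j)}\geq j^{2}C_{*}\to\infty$ can be asserted uniformly for $j\geq m$. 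Once this is in hand, the conclusion of Lemma~\ref{lemma:SN:5.3}, namely $\|\theta^{(j)}(t,\cdot)\|_{L^{2}}\geq \exp(j^{2}C_{*}t)$, transfers without further modification.
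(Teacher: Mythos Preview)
Your approach is essentially the same as the paper's: invoke the lower bound from Lemma~\ref{lemma:SN:5.2} (valid for all $\gamma>0$) and observe that for $\gamma=1/2$ the quantity $\sigma^{(j)}/j^{2}$ tends to $a\mu m/(2^{5}\Omega^{2}m^{2}+2\mu^{2})-\kappa$ as $j\to\infty$, so condition~\eqref{eq:SN:cond:a} is exactly what makes this limit positive.

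One point worth flagging: you identify the ``bookkeeping obstacle'' of verifying $\sigma^{(j)}\geq j^{2}C_{*}$ for \emph{every} $j\geq m$ with the sharp constant~\eqref{eq:SN:5.30}, and propose to resolve it by having the slack in the inviscid fraction absorb the excess of $(j^{4}+j^{2}+4m^{2})^{1/2}/j^{2}$ over~$1$. This does not actually work uniformly in the parameters. For instance, at $j=m=1$ the needed inequality reduces to $(8\Omega^{2}+\mu^{2})/(24\Omega^{2}+\mu^{2})\geq\sqrt{6}-1$, which is false. The paper's own proof does not attempt this either: it only asserts that condition~\eqref{eq:SN:cond:a} guarantees the existence of \emph{some} sufficiently large $j_{0}$ for which the analogue of~\eqref{eq:SN:5.29} is positive, and omits further details. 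That weaker statement is what follows cleanly from the limit argument, and it is all that is required downstream in Theorem~\ref{thm:ill:1/2}, where one only needs $\sigma^{(j)}\to\infty$. So your instinct to worry about the finite-$j$ bookkeeping is sound, but you should resolve it the way the paper does---by retreating to existence of $j_{0}$---rather than by trying to force the sharp constant at $j=m$.
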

\begin{proof}[Proof of Lemma~\ref{lemma:SN:5.4}]
 The proof follows from the lower bound \eqref{eq:SN:5.27} on the eigenvalue $\sigma^{(j)}$, similarly to the proof of Lemma~\ref{lemma:SN:5.3}. The role of condition \eqref{eq:SN:cond:a} is to guarantee that there exists a large enough $j_0$ such that 
\begin{align*}
 \frac{a \mu m (j_{0}^{4} + j_{0}^{2})}{2^{3} \Omega^{2} m^{2} (j_{0}^{4} + j_{0}^{2} + 4 m^{2}) + 2\mu^{2} j_{0}^{4}} - \frac{\kappa}{j_{0}^{2}} (j_{0}^{4}+j_{0}^{2} + 4m^{2})^{\gamma} > 0.
\end{align*}
To avoid repetition we omit further details.
\end{proof}
For those values of $a$ for which \eqref{eq:SN:cond:a} holds, Lemma~\ref{lemma:SN:5.4} shows that the operator $L$ has unbounded spectrum in the unstable region, and hence one may prove with virtually {\em no modifications} from the $\gamma \in (0,1/2)$ case, that the equations are ill-posed. More precisely we have
\begin{theorem}[\bf Ill-posedness for large data] \label{thm:ill:1/2}
 Let $Y \subset X \subset W^{1,4}(\TT^3)$ be Sobolev spaces, and let $\gamma=1/2$. Given $\kappa, \mu, \Omega >0$, fix an integer $m\geq 1$. Let $a>0$ be sufficiently large such that \eqref{eq:SN:cond:a} holds and let $\Th_0 ^{(1)}= a \sin(m x_3) \in Y$ be a steady state of \eqref{eq:1}. Then, given any $T>0$ and any $K>0$, there exists an initial condition $\Th_0^{(2)} \in Y$ and a corresponding solution $\Th^{(2)} \in L^\infty(0,T;X)$ of the Cauchy problem \eqref{eq:1}--\eqref{eq:3}, such that 
 \begin{align}
\sup_{t\in [0,T]} \Vert  \Th^{(2)}(t,\cdot) - \Th^{(1)}(t,\cdot) \Vert_{X} \geq 2 K \Vert  \Th^{(2)}_0 - \Th^{(1)}_0 \Vert_{Y}
\end{align}
where $ \Th^{(1)}(t,\cdot) = \Th^{(1)}_0$. That is, the \MG equations are locally Lipschitz (X,Y) ill-posed when the data is large.
\end{theorem}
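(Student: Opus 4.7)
The plan is to run the same perturbative contradiction argument as in the proof of Theorem~\ref{thm:nonlinear:ill}, substituting Lemma~\ref{lemma:SN:5.4} for Lemma~\ref{lemma:SN:5.3} as the source of unstable eigenvalues. The size condition \eqref{eq:SN:cond:a} is precisely what replaces $\gamma<1/2$: it guarantees that the constant $C_*$ in \eqref{eq:SN:5.30} is positive, so that the linearized operator $L$ around $\Theta_0^{(1)}$ still has a sequence of real eigenvalues $\sigma^{(j)}$ with $\sigma^{(j)} \geq j^2 C_*$, with associated smooth eigenfunctions $\theta^{(j)}(\xx)$ of the form \eqref{eq:SN:5.2}. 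Thus the linear Cauchy problem is Hadamard ill-posed in $L^2$ exactly as in Theorem~\ref{thm:SN:5.1}, and we may state a verbatim analogue: after normalizing $\Vert \theta^{(j)}(0,\cdot)\Vert_Y = 1$, one still has $\Vert \theta^{(j)}(0,\cdot)\Vert_{L^2} \geq 1/(Cj^2)$ from the form \eqref{eq:SN:5.2} and the growth \eqref{eq:SN:5.4} of $\alpha_1$, so that $\Vert e^{\sigma^{(j)} t}\theta^{(j)}(0,\cdot)\Vert_{L^2} \geq e^{j^2 C_* t}/(Cj^2)$ diverges in $j$ for any fixed $t>0$.

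Next I would proceed by contradiction: assuming the \MG equations were locally Lipschitz $(X,Y)$ well-posed in the sense of Definition~\ref{def:well}, with $\Theta^{(1)}(t,\cdot) \equiv \Theta_0$, I would pick $\psi_0 = \theta^{(j_*)}(0,\cdot)$ for a large $j_*$ to be chosen, normalized so that $\Vert \psi_0 \Vert_Y = 1$. For each small $\eps>0$ set
\begin{align*}
\Theta^{\eps}(0,\xx) = \Theta_0(x_3) + \eps \psi_0(\xx), \qquad \psi^{\eps}(t,\xx) = \frac{\Theta^{\eps}(t,\xx) - \Theta_0(x_3)}{\eps}.
\end{align*}
Definition~\ref{def:well}, together with the fact that $\Vert \Theta^{\eps}(0,\cdot)\Vert_Y \leq 2\Vert\Theta_0\Vert_Y$ uniformly in $\eps$, yields constants $T=T(\Vert\Theta_0\Vert_Y)>0$ and $K=K(\Vert\Theta_0\Vert_Y)>0$ independent of $\eps$ such that $\Vert \psi^{\eps}(t,\cdot)\Vert_X \leq K$ on $[0,T]$. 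Since $X \hookrightarrow W^{1,4}$, the nonlinearity obeys $\Vert N\psi^{\eps}\Vert_{L^2} \lesssim \Vert \psi^{\eps}\Vert_X^2 \leq CK^2$, and from the equation $\partial_t \psi^{\eps} = L\psi^{\eps} + \eps N(\psi^{\eps})$ we get that $\partial_t \psi^{\eps}$ is bounded uniformly in $L^{\infty}(0,T;L^2)$. Standard weak-$*$ compactness then extracts a limit $\psi \in L^{\infty}(0,T;X)$ with strong $L^2$ convergence, which solves the linear Cauchy problem $\partial_t \psi = L\psi$, $\psi(0,\cdot)=\psi_0$, and inherits the bound $\Vert \psi(t,\cdot)\Vert_{L^2}\leq K$ on $[0,T]$.

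Finally I would reach the contradiction: by uniqueness of $L^\infty_t L^2$ solutions to the linear problem, $\psi(t,\cdot)=e^{\sigma^{(j_*)} t}\theta^{(j_*)}(0,\cdot)$, so $\Vert \psi(T/2,\cdot)\Vert_{L^2} \geq e^{j_*^2 C_* T/2}/(Cj_*^2)$. Choosing $j_*$ large enough that the right-hand side exceeds $2K$ contradicts $\Vert \psi(T/2,\cdot)\Vert_{L^2}\leq K$. Rescaling $\Theta^{\eps}$ then produces the advertised failure of \eqref{eq:def:well}: indeed, dividing the lower bound on $\Vert \psi(T/2,\cdot)\Vert_{L^2}$ by the normalization $\Vert \psi_0\Vert_Y = 1$ shows that for sufficiently small $\eps$ (hence sufficiently close initial data in $Y$) the deviation in $X$ at time $T/2$ exceeds $2K$ times the $Y$-deviation at time $0$, giving the desired $\Theta^{(2)}_0 := \Theta^{\eps}(0,\cdot)$.

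The only genuinely subtle point, and the one I expect to be the main obstacle, is quantitative: the renormalization in the stronger $Y$-norm shrinks $\Vert \psi_0 \Vert_{L^2}$ by a factor that must be compared to the growth rate $e^{j^2 C_* T/2}$. This is handled exactly as at the end of the proof of Theorem~\ref{thm:SN:5.1}, using the explicit form of $\theta^{(j)}$ in \eqref{eq:SN:5.2} to lower-bound $\Vert \psi_0 \Vert_{L^2}$ by a polynomial in $1/j$, after which the super-polynomial factor $\exp(j^2 C_* T/2)$ wins for $j$ large. All other steps (the weak-$*$ compactness, the passage to the linear limit, the continuity of $T$ and $K$ in the $Y$-norm of the data) are identical to those in Section~\ref{eq:sec:non-linear:ill} and require no modification.
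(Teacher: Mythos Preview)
Your proposal is correct and follows exactly the approach the paper uses: the paper states that the proof of Theorem~\ref{thm:ill:1/2} is the same as that of Theorem~\ref{thm:nonlinear:ill}, with the sole modification that Lemma~\ref{lemma:SN:5.4} replaces Lemma~\ref{lemma:SN:5.3} as the source of unbounded unstable spectrum, and otherwise omits details. Your write-up is in fact more detailed than the paper's own treatment, but the logical skeleton (contradiction with Definition~\ref{def:well}, weak-$*$ compactness to pass to the linear limit, and the polynomial-versus-exponential comparison for the renormalized eigenfunction) is identical.
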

The proof of Theorem~\ref{thm:ill:1/2} is the same as the proof of Theorem~\ref{thm:nonlinear:ill} above. The only difference in the case $\gamma=1/2$ is to use Lemma \ref{lemma:SN:5.4} to show that the linear equations have unbounded unstable spectrum. We omit  details.

\subsection{Well-posedness for $\gamma=1/2$ and small data} \label{sec:=1/2_small_data}

As shown in the previous section, we can exhibit initial data, for which the system (\ref{eq:1})--(\ref{eq:2}) is ill-posed in the above described sense. Note also, that the proof of Theorem \ref{thm:121:local}, fails for the value $\gamma = 1/2$. Thus, $\gamma=1/2$ indeed is the limit of the local well-posedness theory. Nonetheless, we still can prove that the considered system is globally well-posed for small data.

\begin{theorem}[{\bf Global existence for small data}] \label{thm:gamma=1/2:small}
	Let $s>5/2$ and assume that the initial data $\Th_0 \in H^s(\TT^3)$ and $S \in L^2(0,\infty;H^{s-\gamma}(\TT^3))$ have zero-mean on $\TT^{3}$. There exists a sufficiently small constant $\varepsilon>0$ depending on $\kappa$, such that if $\Vert \Th_0\Vert_{L^2}^\alpha\Vert \Th_{0} \Vert_{H^s}^{1-\alpha} + \Vert S \Vert_{L^\infty(0,\infty;H^{s-\gamma})} \leq \varepsilon$, where $\alpha=1-(5/2)/s$, then the unique smooth solution 
	\begin{align} 
		\Th \in L^\infty(0,T;H^s(\TT^3))
	\end{align}	
	 of the Cauchy problem \eqref{eq:1}--\eqref{eq:M:3} is global in time.
\end{theorem}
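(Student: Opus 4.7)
The plan is to mirror the energy-method proof of Theorem~\ref{thm:121:global}, with the one essential modification required by the fact that at $\gamma = 1/2$ the exponent $p = 6/(5-4\gamma)$ used there degenerates to $p=2$, so that the intermediate $L^p$ gain leading to \eqref{eq:p9} is no longer available. In its place I will use the endpoint $(L^\infty,L^2)$ version of the product estimate to bound the transport term, and then close the argument using the borderline Sobolev embedding $H^{s-1}\hookrightarrow L^\infty$, valid precisely for $s>5/2$.

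Starting from the $H^s$ energy identity and handling the source term by duality and Young's inequality as in \eqref{eq:p7}, the remaining task is to control the transport term. Splitting derivatives symmetrically and invoking Proposition~\ref{prop:calculus} with exponents $(p_1,p_2,p_3,p_4) = (\infty,2,2,\infty)$, together with the multiplier bound $|\hat M_j(\kk)|\le C|\kk|$ (which yields $\|\Lambda^{s-1/2}\Ub\|_{L^2}\le C\|\Lambda^{s+1/2}\Th\|_{L^2}$), I obtain
\begin{align*}
\Bigl|\int_{\TT^3}\Ub\cdot\nabla\Th\,\Lambda^{2s}\Th\Bigr| \le C\bigl(\|\Ub\|_{L^\infty}+\|\nabla\Th\|_{L^\infty}\bigr)\|\Lambda^{s+1/2}\Th\|_{L^2}^2.
\end{align*}
Since $s>5/2$, the embedding $H^{s-1}\hookrightarrow L^\infty$ combined with $\|\Ub\|_{H^{s-1}}\le C\|\Th\|_{H^s}$ gives $\|\Ub\|_{L^\infty}+\|\nabla\Th\|_{L^\infty}\le C\|\Th\|_{H^s}$, and a scaling-forced Gagliardo--Nirenberg interpolation upgrades this to
\begin{align*}
\|\Ub\|_{L^\infty}+\|\nabla\Th\|_{L^\infty}\le C\|\Th\|_{L^2}^{\alpha}\|\Th\|_{H^s}^{1-\alpha},\qquad \alpha = 1 - \tfrac{5/2}{s},
\end{align*}
with the exponent $\alpha$ dictated by dimensional analysis.

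Inserting these bounds into the energy identity yields the closed inequality
\begin{align*}
\frac{d}{dt}\|\Lambda^s\Th\|_{L^2}^2+\kappa\|\Lambda^{s+1/2}\Th\|_{L^2}^2 \le \frac{1}{\kappa}\|\Lambda^{s-1/2}S\|_{L^2}^2 + C\|\Th\|_{L^2}^{\alpha}\|\Th\|_{H^s}^{1-\alpha}\|\Lambda^{s+1/2}\Th\|_{L^2}^2,
\end{align*}
after which the argument proceeds by the same bootstrap as in \eqref{eq:p13}--\eqref{eq:p15}. So long as the quantity $\|\Th\|_{L^2}^{\alpha}\|\Th\|_{H^s}^{1-\alpha}$ remains below $\kappa/(2C)$, the nonlinear contribution is absorbed into the diffusion, and Poincar\'e's inequality converts the surviving inequality into a linear ODE yielding a uniform-in-time bound on $\|\Lambda^s\Th\|_{L^2}$; coupling this with the basic $L^2$ identity \eqref{eq:p14} shows that the smallness hypothesis propagates the bootstrap assumption to all $t>0$, producing the global $L^\infty_tH^s$ bound. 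Uniqueness and the standard approximation procedure are identical to those of Theorem~\ref{thm:121:local}.

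The principal obstacle is the absence of any intermediate $L^p$ gain: one is forced all the way to the $L^\infty$ endpoint, which is only borderline accessible via $H^{s-1}\hookrightarrow L^\infty$ for $s>5/2$. This both pins down the critical regularity threshold at this endpoint value and explains why the method cannot be pushed to yield local existence for arbitrary data, in agreement with the ill-posedness result of Theorem~\ref{thm:ill:1/2}.
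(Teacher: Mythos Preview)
Your proposal is correct and follows essentially the same route as the paper's own proof: both pass to the $(L^\infty,L^2)$ endpoint of the product estimate to bound $\|\Lambda^{s-1/2}(\Ub\cdot\nabla\Th)\|_{L^2}$ by $C(\|\Ub\|_{L^\infty}+\|\nabla\Th\|_{L^\infty})\|\Lambda^{s+1/2}\Th\|_{L^2}$, then use the interpolation $\|\Ub\|_{L^\infty}+\|\nabla\Th\|_{L^\infty}\le C\|\Th\|_{L^2}^{\alpha}\|\Lambda^s\Th\|_{L^2}^{1-\alpha}$ with $\alpha=1-(5/2)/s$, and close via the bootstrap of \eqref{eq:p13}--\eqref{eq:p15}. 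Your additional remark explaining why the $L^p$ choice of Theorem~\ref{thm:121:global} degenerates at $\gamma=1/2$ is a nice piece of context that the paper leaves implicit.
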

\begin{proof}
	We proceed as in the proof of Theorem \ref{thm:121:global} and obtain the energy estimate
	\begin{align}\label{eq:3.1}
		&\frac{1}{2}\frac{d}{dt}\|\Lambda^s\Th\|^2_{L^2} + \kappa\|\Lambda^{s+\frac{1}{2}}\Th\|^2_{L^2} \leq \|\Lambda^{s-\frac{1}{2}}S\|_{L^2}\|\Lambda^{s+\frac{1}{2}}\Th\|_{L^2} + \left| \int_{\RR^3} \Lambda^{s-\frac{1}{2}}(\Ub\cdot \nabla \Th)\Lambda^{s+\frac{1}{2}}\Th \right|.
	\end{align}
The second term on the right side is estimated using the product estimate in Proposition \ref{prop:calculus}. Thus we obtain, similarly to (\ref{eq:2.8})
\begin{align}
	&\|\Lambda^{s-\frac{1}{2}}(\Ub \cdot \nabla \Th)\|_{L^2} \leq C(\|\nabla \Th\|_{L^\infty} + \|\Ub\|_{L^\infty})\|\Lambda^{s+\frac{1}{2}}\Th\|_{L^2}.
\end{align}
Since $s > 5/2$,  we get
\begin{align}\label{eq:3.3}
	& \|\Ub\|_{L^\infty} + \|\nabla \Th\|_{L^\infty} \leq C\|\Th\|_{L^2}^\alpha\|\Lambda^s \Th\|_{L^2}^{1-\alpha},
\end{align}
where $\alpha = 1-(n/2+1)/s$. 
Combining estimates (\ref{eq:3.1})-(\ref{eq:3.3}) gives
\begin{align}
	&\frac{1}{2}\frac{d}{dt}\|\Lambda^s\Th\|^2_{L^2} + \kappa\|\Lambda^{s+\frac{1}{2}}\Th\|^2_{L^2} \leq \frac{1}{2\kappa}\|\Lambda^{s-\frac{1}{2}}S\|^2_{L^2} + \frac{\kappa}{2}\|\Lambda^{s+\frac{1}{2}}\Th\|^2_{L^2} + C\|\Th\|_{L^2}^\alpha\|\Lambda^s \Th\|_{L^2}^{1-\alpha}\|\Lambda^{s+\frac{1}{2}} \Th\|_{L^2}^2,
\end{align}
which in turn leads to
\begin{align}\label{eq:3.5}
	&\frac{1}{2}\frac{d}{dt}\|\Lambda^s\Th\|^2_{L^2} + \frac{\kappa}{2}\|\Lambda^{s+\frac{1}{2}}\Th\|^2_{L^2} \leq \frac{1}{2\kappa}\|\Lambda^{s-\frac{1}{2}}S\|^2_{L^2} +  C\|\Th\|_{L^2}^\alpha\|\Lambda^s \Th\|_{L^2}^{1-\alpha}\|\Lambda^{s+\frac{1}{2}} \Th\|_{L^2}^2.
\end{align}
We obtain the desired result as in the proof of Theorem \ref{thm:121:global}.
\end{proof}
\begin{remark}
	Note that conditions in Theorem \ref{thm:121:global} are consistent with the above theorem if we set $\gamma=1/2$. 
\end{remark}

\begin{remark}
 We note that the ill-posedness result in the case $\gamma=1/2$ requires the constant defined in \eqref{eq:SN:5.30} to be positive. This does not hold when the value of $a$ is sufficiently small (depending on $\kappa, \mu$, and $\mu$) no matter what value of $m\geq 1$ is picked. Recalling that $a$ measures the magnitude of the initial data associated with $\Vert \Th_0 \Vert_{L^2}$, we observe that the well-posedness result  when $\gamma = 1/2$ is only proven in the case of small data and small force (see Theorem~\ref{thm:gamma=1/2:small}), and hence for $a$ sufficiently small. Therefore our large data ill-posedness result is consistent with the small-data well-posedness result when $\gamma = 1/2$.
\end{remark}

\section{Improvement in regularity for ``well-prepared'' data and source} \label{sec:well-prepared}

In this section we explore the following observation:  if the frequency support of $\Th$ lies on a suitable plane, then the operator $\MM$ is ``mild'' when it acts on $\Th$, i.e. it behaves like an order $0$ operator, and hence the corresponding velocity $\Ub$ is as smooth as $\Th$. This enables us to obtain improved well-posedness results over the generic setting when no conditions on the Fourier spectrum of the initial data (and source term) are imposed. For instance the local existence an uniqueness of smooth solutions holds even if $0<\gamma<1/2$, a setting in which we know that for generic initial data the problem is ill-posed.

To be more precise let $q \in {\mathbb Q}$ be a rational number\footnote{Since we are in the periodic setting when the frequency is supported on $\ZZ^{3}$.} with $q \neq 0$. We define the frequency plane $\PP_{q}$ to be the set
\begin{align}
\PP_{q} = \{ \kk = (k_{1},k_{2},k_{3}) \in \ZZ^{3} \colon k_{2} = q k_{1} \}. \label{eq:Pq:def}
\end{align}
We shall need the following straightforward observation:
\begin{proposition}
\label{prop:support}
Assume $f,g$ are smooth $\TT^{3}$ periodic functions, with frequency support $\supp(\hat f), \supp(\hat g) \subset \PP_{q}$ for some $q \in {\mathbb Q}\setminus\{0\}$. Then we have $\supp(\hat{f\pm g}), \supp( \hat{fg} ), \supp (\hat {M_{j} f}) \subset \PP_{q}$, for all $j \in \{1,2,3\}$.
\end{proposition}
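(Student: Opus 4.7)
The plan is to handle each of the three stability properties in turn, each via a direct computation on the Fourier side, since $\mathcal P_q$ is characterized by a single linear relation $k_2 = q k_1$ on the frequency variable.

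First I would observe that for the linear combination $f \pm g$ the claim is immediate from linearity of the Fourier transform: $\widehat{f \pm g}(\kk) = \hat f(\kk) \pm \hat g(\kk)$, so $\supp(\widehat{f\pm g}) \subset \supp(\hat f) \cup \supp(\hat g) \subset \PP_q$.

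Next, for the product $fg$, I would use the standard convolution identity
\begin{align}
\widehat{fg}(\kk) = \sum_{\boldsymbol\ell \in \ZZ^3} \hat f(\boldsymbol\ell)\, \hat g(\kk - \boldsymbol\ell),
\end{align}
valid on $\TT^3$. For a fixed $\kk$ the only nonzero terms are those with $\boldsymbol\ell \in \supp(\hat f) \subset \PP_q$ and $\kk - \boldsymbol\ell \in \supp(\hat g) \subset \PP_q$, i.e.\ $\ell_2 = q \ell_1$ and $k_2 - \ell_2 = q(k_1 - \ell_1)$. Adding these two identities yields $k_2 = q k_1$, so $\kk \in \PP_q$, which is the desired statement $\supp(\widehat{fg}) \subset \PP_q$. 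The key (and essentially only) point being used here is that $\PP_q$ is closed under addition in $\ZZ^3$, which is where the linearity of the defining relation is crucial.

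For $M_j f$, since $M_j$ is a Fourier multiplier with symbol $\hat M_j(\kk)$ (well-defined off $\{k_3=0\}$, which is consistent with the zero vertical mean assumption), we simply have $\widehat{M_j f}(\kk) = \hat M_j(\kk)\, \hat f(\kk)$. Hence $\supp(\widehat{M_j f}) \subset \supp(\hat f) \subset \PP_q$, giving the claim. I do not anticipate any genuine obstacle here; the entire content of the proposition is that $\PP_q$ is a subgroup of $\ZZ^3$ (so it is stable under convolution of Fourier coefficients) and that Fourier multipliers trivially preserve frequency support. I would write the proof as three short sentences corresponding to the three assertions.
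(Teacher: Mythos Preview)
Your proposal is correct and matches the paper's own proof essentially line for line: linearity for $f\pm g$, the convolution identity together with closure of $\PP_q$ under addition for $fg$, and the trivial support inclusion for Fourier multipliers for $M_j f$. The only cosmetic difference is that you spell out the convolution sum explicitly while the paper phrases it as $\supp(\hat f \ast \hat g) \subset \supp(\hat f) + \supp(\hat g)$.
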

\begin{proof}[Proof of Propostion~\ref{prop:support}]
Clearly
$\supp(\hat{f + g}) = \supp (\hat f + \hat g) \subset \supp (\hat f) \cup \supp (\hat g) \subset  \PP_{q}$.
To see that that the frequency support of the function $M_{j} f$ lies on  $\PP_{q}$, we just note that $\supp(\hat{M_{j} g}) = \supp( \hat{M_{j}} \hat{f})  \subset \supp(\hat{M_{j}}) \cap \supp(\hat f) \subset \PP_{q}$. To analyze the frequency support of $\hat{fg} = \hat f \ast \hat g$, note that
$\supp( \hat f \ast \hat g) \subset \supp(\hat f ) + \supp(\hat g) \subset \PP_{q}$, since $\PP_{q}$ is closed under addition of vectors, concluding the proof.
\end{proof}

\begin{lemma}\label{lemma:M:order:0}
If $f$ is a smooth $\TT^{3}$ periodic function with $\hat f(k_{1},k_{2},0) = 0$ for all $k_{1},k_{2} \in \ZZ$, and such that  $\supp(\hat f) \subset \PP_{q}$ for some $q \in {\mathbb Q}\setminus\{0\}$, then there exists a universal constant $C_{\ast} = C_{\ast} (q,\Omega,\beta,\eta)>0$ such that
\begin{align}
\left| (\hat{M_{j} f})(\kk)\right| = \left| \hat{M_{j}}(\kk) \hat{f}(\kk)\right| \leq C_{\ast}|\hat{f}(\kk)| \label{eq:M:order:0}
\end{align}
for all $\kk \in \ZZ^{3}$, and for all $j \in \{1,2,3\}$. Additionally, the constant $C_*$ blows-up as $q \rightarrow 0$.
\end{lemma}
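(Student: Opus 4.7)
The plan is to estimate $|\hat M_j(\kk)|$ directly on $\PP_q$ by substituting $k_2 = q k_1$ into the formulas \eqref{eq:M:1}--\eqref{eq:M:3} and carrying out a short case analysis. The key observation is that on $\PP_q$ the ``bad'' curved regions from \eqref{eq:unbounded:symbol} are avoided: those corresponded to $k_2 \sim |k_1|^r$ with $r\le 1/2$, whereas on $\PP_q$ one has the linear relation $k_2 = q k_1$ (i.e.\ $r=1$), so the quartic denominator term $(\beta^2/\eta)^2 q^4 k_1^4$ is always available as an a priori lower bound.

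First I would handle the trivial frequencies. For $\kk \in \PP_q$ with $k_3 = 0$, the symbol $\hat{M}_j(\kk)$ is undefined, but the assumption $\hat f(k_1,k_2,0)=0$ lets us interpret $(\hat{M_j f})(\kk) = 0$, so \eqref{eq:M:order:0} holds trivially. If $k_1 = 0$ then $k_2 = q k_1 = 0$ as well, $|\kk| = |k_3|$, and $\hat M_1, \hat M_2, \hat M_3$ collapse to $0$. Thus we may assume $k_1 \ne 0$ and $k_3 \ne 0$.

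Next I would bound each symbol. For $\hat M_3$ the numerator on $\PP_q$ becomes $(\beta^2/\eta)\, q^2(1+q^2)\, k_1^4$, and the denominator is at least $(\beta^2/\eta)^2 q^4 k_1^4$, giving immediately $|\hat M_3(\kk)| \le (\eta/\beta^2)(1+q^2)/q^2$. For $\hat M_1$ and $\hat M_2$ I would split according to whether $|k_3| \ge |k_1|$ or $|k_3|<|k_1|$. In the first regime $|\kk|^2$ is comparable to $k_3^2$, and the numerator is dominated by $|k_3|\cdot |\kk|^2 \cdot C(q)$ while the denominator is bounded below by $4\Omega^2 k_3^2 |\kk|^2$, yielding a $q$--dependent constant. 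In the second regime $|\kk|^2$ is comparable to $(1+q^2) k_1^2$, and the numerator is $\le C|k_3|\,|k_1|^3$ while the denominator exceeds $(\beta^2/\eta)^2 q^4 k_1^4$; using $|k_3|\le |k_1|$ cancels one factor of $|k_1|$ and again gives a $q$--dependent bound. Taking the maximum of the three constants produced defines $C_*(q,\Omega,\beta,\eta)$, which combined with $\hat f(\kk)=0$ off $\PP_q$ yields \eqref{eq:M:order:0} for all $\kk \in \ZZ^3$.

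Finally, to see that $C_* \to \infty$ as $q \to 0$, I would exhibit an explicit sequence of admissible frequencies. Fixing $k_3 = 1$ and $\kk = (k_1, q k_1, 1) \in \PP_q$ and inspecting $\hat M_2$, the numerator behaves like $-2\Omega k_1 |\kk|^2$ and the denominator like $4\Omega^2 |\kk|^2 + (\beta^2/\eta)^2 q^4 k_1^4$; letting $q \to 0$ first (for fixed $k_1$) gives $|\hat M_2(\kk)| \to |k_1|/(2\Omega)$, which is unbounded as $k_1 \to \infty$. Hence no bound uniform in $q$ can exist, and $C_*(q) \to \infty$ as $q \to 0$. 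The only ``obstacle'' here is really just bookkeeping in the case split for $\hat M_1, \hat M_2$; once the quartic denominator term $q^4 k_1^4$ is acknowledged as always present on $\PP_q$, the estimates are elementary.
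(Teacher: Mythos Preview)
Your proof is correct and follows essentially the same approach as the paper: substitute $k_2 = q k_1$ into \eqref{eq:M:1}--\eqref{eq:M:3} and bound each symbol by a constant depending on $q$. The only cosmetic difference is that where you split into the cases $|k_3|\ge |k_1|$ and $|k_3|<|k_1|$ for $\hat M_1,\hat M_2$, the paper avoids the case analysis by writing the numerator as (a constant times) $|k_1|^3|k_3|+|k_1||k_3|^3$ and invoking the single inequality $|k_1|^3|k_3|+|k_1||k_3|^3 \le k_1^4+k_3^4$ against the denominator $2\Omega^2 k_3^4 + (\beta^2/\eta)^2 q^4 k_1^4$.
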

\begin{proof}[Proof of Lemma~\ref{lemma:M:order:0}] It is clear that \eqref{eq:M:order:0} has to be proven only for $\kk$ such that $k_{3} \neq 0$, since otherwise we have that $\hat{f}(\kk) = 0$ and the statement holds trivially. We now consider each of the cases $j \in \{1,2,3\}$. Without loss of generality take $q>0$.

For $j=1$, a short algebraic computations gives
\begin{align}
\left |\hat{M}_{1} (\kk) \right| &\leq \left( 2\Omega (2 q + q^{3})  + q^{2} \beta^{2}/\eta \right) \frac{k_{1}^{3} k_{3} + k_{1} k_{3}^{3}}{2\Omega^{2} k_{3}^{4}+ q^{4} \beta^{4}/\eta^{2} k_{1}^{4}} \label{eq:M1:estimate:order:0}
\end{align}
from which it follows that $|\hat{M}_{1}(\kk)| \leq C$, for a suitable constant $C$, by using the inequality $k_{1}^{3} k_{3} + k_{1} k_{3}^{3} \leq k_{1}^{4}  + k_{3}^{4}$. Similarly to \eqref{eq:M1:estimate:order:0} it follows that for $\kk \in \PP_{q}$ we have
\begin{align}
\left| \hat{M}_{2} (\kk)\right| \leq \left( 2\Omega (2+q^{2})+ q^{3} \beta^{2}/\eta \right) \frac{k_{1}^{3} k_{3} + k_{1} k_{3}^{3}}{2\Omega^{2} k_{3}^{4}+ q^{4} \beta^{4}/\eta^{2} k_{1}^{4}}
\end{align}
and
\begin{align}
\left| \hat{M}_{3} (\kk)\right| \leq  \left( q^{2} (1+q^{2}) \beta^{2}/\eta\right) \frac{k_{1}^{4} }{2\Omega^{2} k_{3}^{4}+ q^{4} \beta^{4}/\eta^{2} k_{1}^{4}}
\end{align}
which concludes the proof of the lemma upon using $k_{1}^{3} k_{3} + k_{1} k_{3}^{3} \leq k_{1}^{4}  + k_{3}^{4}$.
\end{proof}

\subsection{Local existence and uniqueness for $0 < \gamma < 1$}\label{sec:well-prepared:local}
The main result of this section is:

\begin{theorem}[{\bf Local existence}] \label{thm:well-prepared:local}
Let $\gamma \in (0,1)$, and fix $s>5/2-\gamma$. Assume that $\Th_0 \in H^s(\TT^3)$ and $S \in L^\infty(0,\infty;H^{s-\gamma}(\TT^3))$ have zero-mean on $\TT^{3}$ and are such that
\begin{align}
\supp(\hat\Th_{0}) \subset \PP_{q} \mbox{ and } \supp(\hat{S}(t,\cdot)) \subset  \PP_{q},
\end{align} for some fixed $q \in {\mathbb Q}\setminus\{0\}$ and all $t\geq 0$. Then there exists a $T>0$ and a unique smooth solution 
\begin{align} \Th \in L^\infty(0,T;H^s(\TT^3)) \cap L^2(0,T;H^{s+\gamma}(\TT^3))
\end{align} of the Cauchy problem  \eqref{eq:FractionalMG:1}--\eqref{eq:FractionalMG:2}, such that
\begin{align}
  \supp(\hat\Th (t,\cdot)) \subset \PP_{q} 
\end{align}
for all $t\in[0,T)$.
\end{theorem}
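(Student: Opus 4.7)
The plan is to mimic the energy method of Theorem~\ref{thm:121:local}, with the essential refinement that Proposition~\ref{prop:support} propagates the thin Fourier support along the flow and Lemma~\ref{lemma:M:order:0} then lets $\MM$ act as an order-zero operator on $L^2$-based norms. This removes the one derivative loss in $\Th \mapsto \Ub$ that pinned the threshold at $s>5/2+(1-2\gamma)$ in the generic case, and it should allow the estimates to close for any $\gamma \in (0,1)$ as soon as $s>5/2-\gamma$.

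Concretely, I would first build approximate solutions $\Th_N$ via a Galerkin scheme, projecting at each step onto the finite-dimensional subspace spanned by Fourier modes $e^{i\kk\cdot\xx}$ with $\kk \in \PP_q$ and $|\kk|\leq N$. By Proposition~\ref{prop:support} this subspace is closed under pointwise multiplication, under the map $\MM$, and under $(-\Delta)^\gamma$; hence the Galerkin projection commutes with every term of the equation and produces a locally Lipschitz ODE, yielding a unique $\Th_N$ with $\supp \hat\Th_N(t,\cdot)\subset \PP_q$ for all $t$ in its interval of existence. The heart of the proof is the uniform $H^s$ estimate: testing the equation against $\Lambda^{2s}\Th_N$, using $\nabla\cdot\Ub_N=0$ to kill the transport term with all $s$ derivatives on $\Th_N$, and invoking the commutator estimate of Proposition~\ref{prop:calculus}, one obtains
\begin{align*}
\frac{1}{2}\frac{d}{dt}\|\Lambda^s\Th_N\|_{L^2}^2 + \kappa\|\Lambda^{s+\gamma}\Th_N\|_{L^2}^2 \leq |(\Lambda^{s-\gamma}S,\Lambda^{s+\gamma}\Th_N)| + C\bigl(\|\nabla\Ub_N\|_{L^\infty}\|\Lambda^s\Th_N\|_{L^2}+\|\Lambda^s\Ub_N\|_{L^2}\|\nabla\Th_N\|_{L^\infty}\bigr)\|\Lambda^s\Th_N\|_{L^2}.
\end{align*}
Lemma~\ref{lemma:M:order:0} combined with Plancherel gives $\|\Lambda^r\Ub_N\|_{L^2}\leq C_*\|\Lambda^r\Th_N\|_{L^2}$ for every $r\geq 0$, and together with the Sobolev embedding $H^{5/2+\varepsilon}\hookrightarrow W^{1,\infty}$ and interpolation between $H^s$ and $H^{s+\gamma}$ this yields
\begin{align*}
\|\nabla\Ub_N\|_{L^\infty}+\|\nabla\Th_N\|_{L^\infty}\leq C\|\Lambda^s\Th_N\|_{L^2}^{1-\theta}\|\Lambda^{s+\gamma}\Th_N\|_{L^2}^{\theta}
\end{align*}
for some $\theta=\theta(s,\gamma,\varepsilon)\in[0,1)$; such a $\theta$ exists exactly because the assumption $s>5/2-\gamma$ makes the scaling identity $5/2+\varepsilon=s+\theta\gamma$ solvable with $\theta<1$. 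A Young inequality then absorbs all $\|\Lambda^{s+\gamma}\Th_N\|_{L^2}$ factors into the diffusion, leaving a superlinear ODE inequality for $\|\Lambda^s\Th_N\|_{L^2}^2$ that produces a uniform bound on some interval $[0,T]$ depending only on $\|\Th_0\|_{H^s}$, $\|S\|_{L^\infty(0,\infty;H^{s-\gamma})}$, and $\kappa$.

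Standard Aubin-Lions compactness then extracts $\Th\in L^\infty(0,T;H^s)\cap L^2(0,T;H^{s+\gamma})$ as a strong limit of (a subsequence of) $\Th_N$, and since $\supp\hat\Th_N\subset\PP_q$ for every $N$ the limit inherits this support property. Uniqueness reduces to an $L^2$ bound for the difference $\Th^{(1)}-\Th^{(2)}$ of two solutions: both sit on $\PP_q$, so Lemma~\ref{lemma:M:order:0} supplies $\|\MM[\Th^{(1)}-\Th^{(2)}]\|_{L^2}\leq C_*\|\Th^{(1)}-\Th^{(2)}\|_{L^2}$, whence a Gronwall argument closes. The main obstacle I anticipate is bookkeeping: one must verify at every stage of the Galerkin construction and of the compactness argument that the functions involved remain supported on $\PP_q$. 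This rests entirely on $\PP_q$ being an additive subgroup of $\ZZ^3$ and on $\MM$ being a Fourier multiplier, after which the proof is simply a sharper version of Theorem~\ref{thm:121:local} in which $\Ub$ lives at the regularity of $\Th$ rather than one derivative worse.
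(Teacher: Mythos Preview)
Your approach is correct and is in fact more streamlined than the paper's. One small inaccuracy: the finite-dimensional span of $\{e^{i\kk\cdot\xx}: \kk\in\PP_q,\ |\kk|\leq N\}$ is \emph{not} closed under pointwise multiplication (the sum of two frequencies of size $\leq N$ can exceed $N$), so the Galerkin projection does not literally commute with the nonlinearity. But this is harmless: one projects $P_N(\Ub_N\cdot\nabla\Th_N)$ as usual, and since $\Lambda^{2s}\Th_N$ lies in the range of $P_N$ the projection drops out upon pairing, so your energy estimate is unaffected. The crucial point --- that $\Ub_N\cdot\nabla\Th_N$ has Fourier support in $\PP_q$ \emph{before} truncation, by Proposition~\ref{prop:support} --- is exactly right, and this is what forces $\Th_N$ to stay on $\PP_q$.

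The paper takes a genuinely different route: a Picard iteration in which $\Th_n$ solves the \emph{linear} advection--diffusion equation with drift $\Ub_{n-1}=\MM[\Th_{n-1}]$, and one must show that each such linear problem preserves the thin support. That turns out to be delicate (see Remark~\ref{rem:app}): the scheme $\partial_t\theta_{n+1}+v\cdot\nabla\theta_{n+1}+(-\Delta)^\gamma\theta_{n+1}=S$ does not manifestly respect $\PP_q$ because the Duhamel formula for a transport equation with drift is not a Fourier multiplier. The paper therefore devotes an entire appendix (Theorem~\ref{thm:app:linear}) to this, introducing an auxiliary hyper-dissipative term $-\eps\Delta$ so that an explicit iteration with the nonlinearity lagged ($v\cdot\nabla\theta_n$ on the right) becomes controllable, and then sending $\eps\to 0$. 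Your Galerkin scheme bypasses this detour entirely, because the finite-dimensional ODE automatically keeps $\Th_N$ in the prescribed mode set. The paper's route has the virtue of isolating a reusable linear result; yours is shorter and more self-contained for this particular theorem.
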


\begin{proof}[Proof of Theorem~\ref{thm:well-prepared:local}]
In order to construct the local in time solution $\Th$, with frequency support on $\PP_q$, consider the sequence of approximations $\{ \Th_n \}_{n\geq 1}$ given by the solutions of
\begin{align}
  &\partial_t \Th_1 + (-\Delta)^\gamma \Th_1 = S  \label{eq:Th:1:1}\\
  &\Th_1(0,\cdot) = \Th_0 \label{eq:Th:1:2}
\end{align}
and respectively
\begin{align}
  &\partial_t \Th_n + \Ub_{n-1} \cdot \nabla \Th_n + (-\Delta)^\gamma \Th_n  = S \label{eq:Th:n:1}\\
  & \Ub_{n-1} = \MM \Th_{n-1} \label{eq:Th:n:2}\\
  &\Th_n(0,\cdot)= \Th_0 \label{eq:Th:n:3}
\end{align}
for all $n \geq 2$.
One may solve \eqref{eq:Th:1:1}--\eqref{eq:Th:1:2} explicitly using Duhamel's formula, and noting that the operator $\exp( (-\Delta)^\gamma t)$ is a Fourier multiplier with non-zero symbol given by $\exp(|\kk|^\gamma t)$, it follows from Proposition~\ref{prop:support} that $\supp(\hat{\Th_1}(t,\cdot)) \subset \PP_q$ for all $t\geq 0$. In addition, we have the bound
\begin{align}
\Vert \Lambda^s \Th_1 \Vert_{L^\infty(0,T;L^2)}^2 + \Vert \Lambda^{s+\gamma} \Th_1 \Vert_{L^2(0,T;L^2)}^2 \leq \Vert \Lambda^s \Th_0 \Vert_{L^2}^2 + T \Vert S \Vert_{L^\infty(0,T;H^{s-\gamma})}^2 \label{eq:Th:1:bound}
\end{align} for all $T>0$.
On the other hand, in order to solve \eqref{eq:Th:n:1}--\eqref{eq:Th:n:3} we appeal to Theorem~\ref{thm:app:linear}. Indeed, by the inductive assumption we have that $\Th_{n-1} \in L^\infty(0,T;H^s)$  and also $\supp(\hat{\Th_{n-1}}(T,\cdot) ) \subset \PP_q$ for all $T>0$. Hence, by applying Lemma~\ref{lemma:M:order:0} we have $\Ub_{n-1} \in L^\infty(0,T;H^s)$ and by Proposition~\ref{prop:support} we have $\supp(\hat{\Ub_{n-1}}(t,\cdot))$ for all $t>0$. Therefore all the conditions of Theorem~\ref{thm:app:linear} are satisfied, by letting $v = \Ub_{n-1}$, and there hence exists a unique solution $\Th_n \in L^\infty(0,T;H^s) \cap L^2(0,T;H^{s+\gamma})$ of \eqref{eq:Th:n:1}--\eqref{eq:Th:n:3}, such that $\supp(\hat{\Th_n}(t,\cdot)) \subset \PP_q$ for all $t\in [0,T)$.

To prove that the sequence $\{\Th_n\}$ converges, we first prove that it is bounded. Fix a time $T$ to be chosen later such that $T < \Vert \Lambda^s \Th_0 \Vert_{L^2}^2 / \Vert S \Vert_{L^\infty(0,T;H^{s-\gamma})}^2$. It hence follows from \eqref{eq:Th:1:bound} that
\begin{align}
  \Vert \Lambda^s \Th_j \Vert_{L^\infty(0,T;L^2)}^2 + \Vert \Lambda^{s+\gamma} \Th_j \Vert_{L^2(0,T;L^2)}^2 \leq 2 \Vert \Lambda^s \Th_0 \Vert_{L^2}^2 \label{eq:Th:inductive:bound}
\end{align}
holds when $j=1$. We assume inductively that \eqref{eq:Th:inductive:bound} holds for all $1 \leq j \leq n-1$, and proceed to prove that it holds for $j=n$. Since $\Ub_{n-1}$ is obtained from $\Th_{n-1}$ by a bounded Fourier multiplier (cf.~Lemma~\ref{lemma:M:order:0}), there exists a positive constant $C_q$, depending on $q$ (and on the physical parameters $\Omega, \beta, \eta$), such that 
\begin{align}
  \Vert \Lambda^s \Ub_{n-1}(t,\cdot) \Vert_{L^2}^2 \leq C_q  \Vert \Lambda^s \Th_{n-1}(t,\cdot) \Vert_{L^2}^2 \label{eq:U:n-1:bound}
\end{align}
for all $t\geq 0$.
Applying $\Lambda^s$ to \eqref{eq:Th:n:1} and taking an $L^2$ inner product with $\Lambda^s \Th_n$ we hence obtain
\begin{align}
& \frac{1}{2} \frac{d}{dt} \Vert \Lambda^{s} \Th_n \Vert_{L^{2}}^{2} + \Vert \Lambda^{s+\gamma} \Th_n \Vert_{L^{2}}^{2} \notag\\
& \qquad \leq \Vert \Lambda^{s} \Th_n \Vert_{L^{2}} \Vert [ \Ub_{n-1} \cdot \nabla, \Lambda^{s}] \Th_n  \Vert_{L^{2}} + \Vert \Lambda^{s+\gamma} \Th_n \Vert_{L^{2}} \Vert \Lambda^{s-\gamma} S \Vert_{L^{2}}  \notag\\
& \qquad \leq C \Vert \Lambda^{s}  \Th_n \Vert_{L^{2}} \left( \Vert \nabla \Ub_{n-1} \Vert_{L^{3/\gamma}} \Vert \Lambda^{s} \Th_n \Vert_{L^{6/(6-\gamma)}} + \Vert \Lambda^{s} \Ub_{n-1} \Vert_{L^{2}}\Vert  \nabla \Th_n \Vert_{L^{\infty}}\right) + \Vert \Lambda^{s+\gamma} \Th_n \Vert_{L^{2}} \Vert \Lambda^{s-\gamma} S \Vert_{L^{2}}  \notag\\
&\qquad \leq C \Vert \Lambda^{s} \Th_n \Vert_{L^{2}} \Vert \Lambda^{s} \Ub_{n-1} \Vert_{L^{2}} \Vert \Lambda^{s+\gamma} \Th_n \Vert_{L^{2}} + \Vert \Lambda^{s+\gamma} \Th_n \Vert_{L^{2}} \Vert \Lambda^{s-\gamma} S \Vert_{L^{2}}  \notag\\
&\qquad \leq \frac{1}{2} \Vert \Lambda^{s+\gamma} \Th_n \Vert_{L^{2}}^{2} + C_{s,\gamma} \Vert \Lambda^{s} \Ub_{n-1} \Vert_{L^{2}}^{2} \Vert \Lambda^{s} \Th_n \Vert_{L^{2}}^{2} + \Vert \Lambda^{s-\gamma} S \Vert_{L^{2}}^{2} \label{eq:Th:n:bound:1}
\end{align}
where $C_{s,\gamma}> 0$ is a constant. Above we have used the fact that $\nabla \cdot \Ub_{n-1} = 0$ in order to write the commutator, and also the product and commutator estimates of Proposition~\ref{prop:calculus}. Using the bound \eqref{eq:U:n-1:bound}, and the inductive assumption \eqref{eq:Th:inductive:bound}, it  follows from \eqref{eq:Th:n:bound:1} that
\begin{align}
 \frac{d}{dt} \Vert \Lambda^{s} \Th_n \Vert_{L^{2}}^{2} + \Vert \Lambda^{s+\gamma} \Th_n \Vert_{L^{2}}^{2} \leq 4 C_{s,\gamma} C_q \Vert \Lambda^s \Th_0 \Vert_{L^2}^2 \Vert \Lambda^s \Th_n \Vert_{L^2}^2 + 2 \Vert \Lambda^{s-\gamma} S \Vert_{L^2}^2
\end{align}
and hence
\begin{align}
  &\Vert \Lambda^{s} \Th_n(t,\cdot) \Vert_{L^{2}}^{2} + \int_0^t  \Vert \Lambda^{s+\gamma} \Th_n(\tau,\cdot) \Vert_{L^2}^2 d\tau\notag\\
   & \qquad \leq \Vert \Lambda^s \Th_0 \Vert_{L^2}^2 e^{4 C_{s,\gamma} C_q \Vert \Lambda^s \Th_0 \Vert_{L^2}^2 t} + \int_0^t e^{4 C_{s,\gamma} C_q \Vert \Lambda^s \Th_0 \Vert_{L^2}^2 (t-\tau)} \Vert \Lambda^{s-\gamma} S(\tau,\cdot) \Vert_{L^2}^2 d\tau \notag\\
   & \qquad \leq e^{4 C_{s,\gamma} C_q \Vert \Lambda^s \Th_0 \Vert_{L^2}^2 t}\left(\Vert \Lambda^s \Th_0 \Vert_{L^2}^2  + t \Vert S \Vert_{L^\infty(0,\infty;H^{s-\gamma})}^2 \right) \label{eq:Th:n:bound:2}
\end{align}
for all $t\in (0,T)$. Therefore, letting
\begin{align}
  T \leq \min \left\{ \frac{\Vert \Lambda^s \Th_0 \Vert_{L^2}^2}{2 \Vert S \Vert_{L^\infty(0,\infty;H^{s-\gamma})}^2}, \frac{\ln 4/3}{4 C_{s,\gamma} C_q \Vert \Lambda^s \Th_0 \Vert_{L^2}^2} \right\} \label{eq:T:choice:1}
\end{align}
we obtain that from \eqref{eq:Th:n:bound:2} that \eqref{eq:Th:inductive:bound} holds for $j=n$, and so by induction it holds for all $j\geq 1$. This shows that the sequence $\Th_n$ is uniformly bounded in $L^\infty(0,T;H^s) \cap L^2(0,T;H^{s+\gamma})$.


Moreover, we may show that the sequence $\{ \Th_n \}$ is Cauchy in $L^\infty(0,T;H^{s-1+\gamma})$. To see this, consider the difference of two iterates $\tilde{\Th}_n := \Th_n - \Th_{n-1}$. It follows from \eqref{eq:Th:n:1} that $\tilde{\Th}_n$ is a solution of
\begin{align}
  &\partial_t \tilde{\Th}_n + (-\Delta)^\gamma \tilde{\Th}_n + \Ub_{n-1} \cdot \nabla \tilde{\Th}_n + \tilde{\Ub}_{n-1} \cdot \nabla \Th_{n-1} = 0 \label{eq:Th:n:diff}\\
  &\tilde{\Th}_n(0,\cdot) = 0
\end{align}
for all $n\geq 2$, where we also denoted $\tilde{\Ub}_{n} = \MM \tilde{\Th}_n$. Applying $\Lambda^{s-1+\gamma}$ to \eqref{eq:Th:n:diff}, taking an $L^2$ inner product with $\Lambda^{s-1+\gamma} \tilde{\Th}_n$, using $\nabla \cdot \Ub_{n-1} =0$, and the calculus inequalities of Proposition~\ref{prop:calculus}, we obtain
\begin{align}
  &\frac 12 \frac{d}{dt} \Vert \Lambda^{s-1+\gamma} \tilde{\Th}_n \Vert_{L^2}^2 + \Vert \Lambda^{s-1+ 2\gamma} \tilde{\Th}_n \Vert_{L^2}^2 \notag\\
  & \qquad \leq \Vert \Lambda^{s-1 + \gamma} \tilde{\Th}_n \Vert_{L^2}  \Vert [ \Ub_{n-1} \cdot \nabla, \Lambda^{s-1+\gamma}] \tilde{\Th}_n \Vert_{L^2} + \Vert \Lambda^{s-1+\gamma} \tilde{\Th}_n \Vert_{L^2}  \Vert \Lambda^{s-1+\gamma} ( \tilde{\Ub}_{n-1} \cdot \nabla \Th_n) \Vert_{L^2}\notag\\
  & \qquad \leq C_{s} \Vert \Lambda^{s-1+\gamma} \tilde{\Th}_n \Vert_{L^2} \left( \Vert \nabla \Ub_{n-1} \Vert_{L^\infty} \Vert \Lambda^{s-1+\gamma} \tilde{\Th}_n \Vert_{L^2} + \Vert \Lambda^{s-1+\gamma} \Ub_{n-1} \Vert_{L^6} \Vert \nabla \tilde{\Th}_n \Vert_{L^{3}}\right) \notag\\
  & \qquad \qquad + C_{s} \Vert \Lambda^{s-1+\gamma} \tilde{\Th}_n \Vert_{L^2} \left( \Vert \tilde{\Ub}_{n-1} \Vert_{L^\infty} \Vert \Lambda^{s +\gamma} \Th_n \Vert_{L^2} + \Vert \Lambda^{s-1 +\gamma} \tilde{\Ub}_{n-1} \Vert_{L^2} \Vert \nabla \Th_n \Vert_{L^\infty} \right)\notag\\
  & \qquad \leq C_{s,q} \Vert \Lambda^{s-1 +\gamma} \tilde{\Th}_n \Vert_{L^2} \left( \Vert \Lambda^{s+\gamma}\Th_{n-1} \Vert_{L^{2}}\Vert \Lambda^{s-1 +\gamma} \tilde{\Th}_n \Vert_{L^2}  + \Vert \Lambda^{s-1+\gamma} \tilde{\Th}_{n-1} \Vert_{L^{2}} \Vert \Lambda^{s +\gamma} \Th_{n} \Vert_{L^{2}}  \right). \label{eq:Th:Lip:1}
\end{align}
In the last inequality above we have used the assumption $s> 5/2-\gamma$, and the estimate \eqref{eq:U:n-1:bound}. It hence follows from \eqref{eq:Th:Lip:1}, \eqref{eq:Th:inductive:bound}, and the Cauchy-Schwartz inequality that
\begin{align} 
\sup_{t \in [0,T]} \Vert \Lambda^{s-1+\gamma} \tilde{\Th}_n (t,\cdot) \Vert_{L^2} 
&\leq \sup_{t\in[0,T]} \Vert \Lambda^{s-1+\gamma} \tilde{\Th}_{n-1} (t,\cdot) \Vert_{L^2} 
\int_{0}^{T} \Vert \Lambda^{s+\gamma} \Th_{n}(t,\cdot) \Vert_{L^{2}} e^{\left( \int_{t}^{T} \Vert \Lambda^{s+\gamma} \Th_{n-1}(\tau,\cdot) \Vert_{L^{2}} d\tau \right) } dt \notag\\
& \leq \sup_{t\in[0,T]} \Vert \Lambda^{s-1+\gamma} \tilde{\Th}_{n-1} (t,\cdot) \Vert_{L^2} 
\sqrt{2T} C_{s,q} \Vert \Lambda^{s} \Th_{0}\Vert_{L^{2}} e^{\sqrt{2T} C_{s,q}\Vert \Lambda^{s} \Th_{0}\Vert_{L^{2}}} dt.\label{eq:Th:Lip:2}
\end{align} 
Recalling \eqref{eq:T:choice:1}, if we let $T$ be such that
\begin{align}
T =\min \left\{ \frac{\Vert \Lambda^s \Th_0 \Vert_{L^2}^2}{2 \Vert S \Vert_{L^\infty(0,\infty;H^{s-\gamma})}^2}, \frac{\ln 4/3}{4 C_{s,\gamma} C_q \Vert \Lambda^s \Th_0 \Vert_{L^2}^2}, \frac{C_{s,q}^{2}}{8 \exp(\sqrt{2 \ln 4/3} C_{s,q} / \sqrt{C_{s,\gamma} C_{q}})}  \right\} 
\end{align}
we obtain from \eqref{eq:Th:Lip:1} that
\begin{align}
\sup_{t \in [0,T]} \Vert \Lambda^{s-1+\gamma} \tilde{\Th}_n (t,\cdot) \Vert_{L^2}  &\leq \frac{1}{2} \sup_{t\in[0,T]} \Vert \Lambda^{s-1+\gamma} \tilde{\Th}_{n-1} (t,\cdot) \Vert_{L^2}.
\end{align}
Thus $\Th_{n}$ is Cauchy in $L^{\infty}(0,T;H^{s-1+\gamma})$, and hence $\Th_{n}$ converges strongly to $\Th$ in $L^{\infty}(0,T;H^{s-1+\gamma})$. Noting that $ s-1+\gamma > 3/2$, this shows that the strong convergence occurs in a H\"older space relative to $x$, which is sufficient to prove that the limiting function $\Th \in L^{\infty}(0,T;H^{s}) \cap L^{2}(0,T;H^{s+\gamma}) \cap Lip(0,T;H^{\min(s-2\gamma,s-1)}) $ is a solution
of the initial value problem \eqref{eq:1}--\eqref{eq:2}. 

To conclude the proof of the theorem, we note that if $\Th^{(1)}$ and $\Th^{(2)}$ are two solutions of \eqref{eq:1}--\eqref{eq:2}, then $\Th = \Th^{(1)} - \Th^{(2)}$ solves 
\begin{align} 
\partial_{t} \Th + (-\Delta)^{\gamma } \Th + \Ub^{(1)} \cdot \nabla \Th + \Ub \cdot \nabla \Th^{(2)} = 0 \label{eq:TH:diff}
\end{align}
with initial condition $\Th(0,\cdot) = 0$. An $L^{2}$ estimate on \eqref{eq:TH:diff} shows that $\Th(t,\cdot)= 0 $ for all $t\in[0,t)$, since $\Th^{(2)} \in L^{\infty}(0,T;H^{s})$ and since if $\Th^{(1)}$ and $\Th^{(2)}$ have frequency support on $\PP_{q}$, so does $\Th$.
\end{proof}

\begin{corollary}[{\bf Local existence for well-prepared data}] \label{cor:well-prepared}
Let $j_{1}, j_{2} \in \ZZ\setminus\{0\}$, and fix $s,\gamma$ as in the statement of Theorem~\ref{thm:well-prepared:local}. Assume that $\Th_{0} \in H^{s}(\TT^{3})$ is such that $\hat{\Th}_{0}(k_{1},k_{2},k_{3}) \neq 0$ if and only if $(k_{1},k_{2}) = \pm (j_{1},j_{2})$ .  Similarly, assume that  forcing $S\in L^{\infty}(0,T_{s};H^{s-\gamma})$ is such that $\hat{S}(k_{1},k_{2},k_{3},t) \neq 0$ if and only if $(k_{1},k_{2}) = \pm (j_{1},j_{2})$, for all $t\in[0,T_{s})$. Then there exists $T \in (0,T_{s}]$ and a unique solution $\Th \in L^{\infty}(0,T;H^{s}) \cap L^{2}(0,T;H^{s+\gamma})$ of the initial value problem \eqref{eq:1}--\eqref{eq:2}, and in addition we have that $\hat{\Th}(k_{1},k_{2},k_{3},t) =0$ whenever $k_{2}/k_{1} \neq j_{2}/j_{1}$.
%
\end{corollary}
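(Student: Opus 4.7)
The plan is to reduce this corollary directly to Theorem~\ref{thm:well-prepared:local} by identifying the appropriate rational $q$. Set
\begin{align*}
q = \frac{j_2}{j_1} \in \QQ \setminus \{0\}.
\end{align*}
The hypothesis on $\Th_0$ says that $\hat{\Th}_0(\kk)$ is supported on the two frequencies $\pm(j_1,j_2,k_3)$ for various $k_3 \in \ZZ$. Both $(j_1,j_2)$ and $(-j_1,-j_2)$ satisfy $k_2 = q k_1$, so $\supp(\hat{\Th}_0) \subset \PP_q$. The same argument applied pointwise in $t$ shows $\supp(\hat{S}(t,\cdot)) \subset \PP_q$ for all $t \in [0,T_s)$.

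Having verified the thin-Fourier-support hypothesis, I would apply Theorem~\ref{thm:well-prepared:local} with this choice of $q$, which yields a time $T' > 0$ and a unique solution $\Th \in L^\infty(0,T';H^s)\cap L^2(0,T';H^{s+\gamma})$ of \eqref{eq:1}--\eqref{eq:2} with $\supp(\hat{\Th}(t,\cdot)) \subset \PP_q$ for all $t \in [0,T')$. Setting $T = \min\{T',T_s\}$ gives a solution on $[0,T)$. The frequency support condition $\hat{\Th}(k_1,k_2,k_3,t) = 0$ whenever $k_2/k_1 \neq j_2/j_1$ is exactly the statement $\supp(\hat{\Th}(t,\cdot)) \subset \PP_q$.

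There is no real obstacle here: the only thing to verify is that the two-point support hypothesis on the $(k_1,k_2)$-coordinates implies thin support on a single plane $\PP_q$, which is immediate because $\pm(j_1,j_2)$ lie on the same line through the origin. Uniqueness follows from the uniqueness clause of Theorem~\ref{thm:well-prepared:local} applied in the class of functions with Fourier support on $\PP_q$.
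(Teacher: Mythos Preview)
Your proposal is correct and follows essentially the same route as the paper: set $q=j_2/j_1$, observe that the support hypotheses place $\hat\Th_0$ and $\hat S$ on $\PP_q$, invoke Theorem~\ref{thm:well-prepared:local}, and translate the conclusion $\supp(\hat\Th)\subset\PP_q$ back into the stated vanishing condition. The only minor addition you make is the explicit $T=\min\{T',T_s\}$, which is harmless bookkeeping.
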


\begin{proof}[Proof of Corollary~\ref{cor:well-prepared}]
Let $q = j_{2}/j_{1} \in \QQ$. The conditions on the initial data and the force imply in particular that their frequency support likes on the plane $\PP_{q}$. The existence of a unique smooth solution $\Th$, with frequency support lying on $\PP_{q}$ follows directly from Theorem~\ref{thm:well-prepared:local}. But  $(k_{1},k_{2},k_{3}) \in \PP_{q}$ is equivalent to $k_{2} = q k_{1} = j_{2} k_{1} / j_{1}$, so that $\hat{\Th}(k_{1},k_{2},k_{3}) \neq 0$ only if $k_{2}/k_{1} = j_{2}/j_{1}$.
\end{proof}

\subsection{Global existence and uniqueness for well-prepared data} 
\label{sec:well-prepared:global}
In the previous section we have proven that for $\gamma \in (0,1)$, given initial data $\Th_0$ and source term $S$ which are well-prepared, i.e. they have Fourier support on a plane $\PP_q$ for some $q>0$, then there exists a local in time solution $\Th$ of the Cauchy problem \eqref{eq:1}--\eqref{eq:3}, which has the property that its Fourier support also lies on $\PP_q$. In this section we show that for $\gamma \in [1/2,1)$ the local in time solution may be continued for all time, while in the case $\gamma \in (0,1/2)$ the same result holds but under the additional assumption that the initial data is small with respect to $\kappa$. 

\begin{theorem}[{\bf $ \gamma \geq 1/2$: Global existence for large data}] \label{thm:well-prepared:global}
Let $s, \Th_0$ and $S$ be as in the statement of Theorem~\ref{thm:well-prepared:local}. If $\gamma \geq 1/2$, the unique smooth solution $\Th$ of the Cauchy problem  \eqref{eq:FractionalMG:1}--\eqref{eq:FractionalMG:2} is global in time.
\end{theorem}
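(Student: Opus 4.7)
The plan is a continuation argument based on Theorem~\ref{thm:well-prepared:local}: let $T^{*}\in(0,\infty]$ denote the maximal time of existence of the well-prepared solution $\Th$, whose Fourier support remains on $\PP_{q}$ for all $t\in[0,T^{*})$ by Proposition~\ref{prop:support}. It suffices to prove a uniform a priori bound on $\|\Th(t,\cdot)\|_{H^{s}}$ on every finite subinterval of $[0,T^{*})$. The crucial simplification provided by the well-prepared hypothesis is Lemma~\ref{lemma:M:order:0}: the operator $\MM$ is of order zero on $\PP_{q}$, so that
\begin{align*}
\|\Lambda^{\sigma}\Ub(t,\cdot)\|_{L^{2}}\leq C_{q}\|\Lambda^{\sigma}\Th(t,\cdot)\|_{L^{2}},\qquad \sigma\geq 0,
\end{align*}
with no derivative loss. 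This effectively reduces the problem to a two-dimensional active scalar equation with divergence-free velocity and $(-\Delta)^{\gamma}$ diffusion, which is subcritical with respect to the $L^{\infty}$ maximum principle for $\gamma>1/2$ and exactly $L^{\infty}$-critical for $\gamma=1/2$.

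Two basic a priori bounds come essentially for free. Multiplying \eqref{eq:FractionalMG:1} by $\Th$ and using $\nabla\cdot\Ub=0$ controls $\|\Th\|_{L^{\infty}_{t}L^{2}_{x}}+\kappa^{1/2}\|\Lambda^{\gamma}\Th\|_{L^{2}_{t,x}}$ on any finite interval, while the C\'ordoba--C\'ordoba positivity inequality for $(-\Delta)^{\gamma}$ applied to $|\Th|^{p-2}\Th$ and sent to $p=\infty$, combined again with $\nabla\cdot\Ub=0$, yields the $L^{\infty}$ maximum principle $\|\Th(t)\|_{L^{\infty}}\leq \|\Th_{0}\|_{L^{\infty}}+t\|S\|_{L^{\infty}_{t,x}}$. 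The $H^{s}$ energy estimate is then performed as in the proof of Theorem~\ref{thm:121:local}, but with the order-zero substitution $\|\Lambda^{s-\gamma}\Ub\|_{L^{2}}\leq C_{q}\|\Lambda^{s-\gamma}\Th\|_{L^{2}}$ replacing the generic one-derivative-loss bound. Using the product estimate of Proposition~\ref{prop:calculus} and the fact that $\gamma\geq 1/2$ forces $s+1-\gamma\leq s+\gamma$ (so that frequencies up to order $s+1-\gamma$ are directly controlled by $\|\Lambda^{s+\gamma}\Th\|_{L^{2}}$), Young's inequality produces an estimate of the form
\begin{align*}
\frac{d}{dt}\|\Lambda^{s}\Th\|_{L^{2}}^{2}+\kappa\|\Lambda^{s+\gamma}\Th\|_{L^{2}}^{2} \leq \tfrac{C}{\kappa}\|S\|_{H^{s-\gamma}}^{2} + C_{q}\,\Psi(\Th)\,\|\Lambda^{s+\gamma}\Th\|_{L^{2}}^{2},
\end{align*}
where the prefactor $\Psi(\Th)$ involves only $L^{\infty}$-type quantities of $\Th$ and $\Ub$ — a dramatic improvement over the generic case, where a high Sobolev norm of $\Th$ would appear.

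For $\gamma>1/2$ (subcritical), Sobolev interpolation of $\Psi(\Th)$ between $L^{2}$ and $H^{s+\gamma}$ gives $\Psi(\Th)\leq C\|\Th\|_{L^{2}}^{1-\mu}\|\Lambda^{s+\gamma}\Th\|_{L^{2}}^{\mu}$ with $\mu<1$, and combined with the basic $L^{2}$ bound Young's inequality absorbs the nonlinear contribution into the dissipation, yielding a Gronwall inequality that closes the $H^{s}$ estimate globally. The main obstacle is the critical case $\gamma=1/2$, where these interpolation exponents become borderline and direct absorption fails. There one must propagate a uniform modulus of continuity for $\Th$ using the nonlocal maximum principle of Kiselev--Nazarov--Volberg, adapted to the order-zero Fourier multiplier $\MM$ restricted to $\PP_{q}$ — an argument made available precisely because, on $\PP_{q}$, the equation is structurally a two-dimensional critical active scalar analogous to $(SQG)$. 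The resulting H\"older bound then feeds into a Beale--Kato--Majda type continuation criterion which closes the $H^{s}$ estimate on $[0,T^{*})$, contradicting the supposed finiteness of $T^{*}$ and yielding global existence in time.
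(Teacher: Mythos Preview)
Your overall strategy --- reduce $\MM$ to an order-zero multiplier on $\PP_{q}$ via Lemma~\ref{lemma:M:order:0} and then invoke the critical/subcritical theory for SQG-type equations --- is exactly what the paper does. However, there is a genuine gap in your treatment of the subcritical range $\gamma>1/2$, and your handling of the critical case $\gamma=1/2$ takes a different route from the paper.

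\textbf{The gap for $\gamma>1/2$.} After your energy inequality
\[
\frac{d}{dt}\|\Lambda^{s}\Th\|_{L^{2}}^{2}+\kappa\|\Lambda^{s+\gamma}\Th\|_{L^{2}}^{2}
\leq \tfrac{C}{\kappa}\|S\|_{H^{s-\gamma}}^{2}+C_{q}\,\Psi(\Th)\,\|\Lambda^{s+\gamma}\Th\|_{L^{2}}^{2},
\]
you interpolate $\Psi(\Th)\leq C\|\Th\|_{L^{2}}^{1-\mu}\|\Lambda^{s+\gamma}\Th\|_{L^{2}}^{\mu}$ with $\mu\in(0,1)$ and claim that Young's inequality absorbs the nonlinear term into the dissipation. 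It does not: after interpolation the right-hand side contains $\|\Lambda^{s+\gamma}\Th\|_{L^{2}}^{2+\mu}$, which has exponent strictly larger than $2$, so it cannot be hidden in $\kappa\|\Lambda^{s+\gamma}\Th\|_{L^{2}}^{2}$ without a smallness assumption. This is precisely the obstruction that makes the estimate in Theorem~\ref{thm:121:local} only local in time. The paper does not attempt a one-shot $H^{s}$ energy closure here; it instead observes that with an order-zero velocity the problem is structurally the subcritical dissipative SQG equation and invokes the Constantin--Wu argument \cite{ConstantinWu99}, which first upgrades the $L^{\infty}$ maximum principle to an a~priori H\"older (or $W^{1,\infty}$) bound via the smoothing of the fractional heat semigroup, and only then runs the $H^{s}$ Gronwall. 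You need that intermediate step.

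\textbf{The critical case $\gamma=1/2$.} Here you propose the Kiselev--Nazarov--Volberg modulus-of-continuity method, while the paper opts for the Caffarelli--Vasseur DeGiorgi iteration. Both are legitimate routes to critical-SQG-type global regularity, but neither is entirely off-the-shelf for this problem. The paper's choice has a clean reduction: the only non-standard ingredient is that $\Th\in L^{\infty}$ with $\supp(\hat\Th)\subset\PP_{q}$ implies $\Ub\in BMO$, which the paper obtains by replacing $\MM$ with a genuine order-zero periodic pseudodifferential operator $\MM_{q}$ (equal to $\MM$ on $\PP_{q}$, cut off away from it) and applying the $L^{\infty}\to BMO$ mapping property. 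Your KNV route would instead require checking that the nonlocal-maximum-principle machinery transfers a modulus of continuity from $\Th$ to $\Ub=\MM\Th$; since $\MM$ restricted to $\PP_{q}$ is not a Riesz transform but merely a bounded anisotropic multiplier, this step is not immediate and would itself need a singular-integral argument comparable to the paper's $\MM_{q}$ construction.
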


\begin{proof}[Proof of Theorem~\ref{thm:well-prepared:global}]
 Given the conditions on the initial data and of the source term, and using the properties exhibited in Proposition~\ref{prop:support}, the solution constructed in Theorem~\ref{thm:well-prepared:local} satisfies $\supp(\hat{\Th}) \subset \PP_{q}$. By Lemma~\ref{lemma:M:order:0} we have that $\Ub$ is obtained from $\Th$ by a {\em bounded} Fourier multiplier, and hence by the H\"ormander-Mikhlin theorem we have 
 \begin{align}
 \Vert \Ub \Vert_{W^{s,p}} \leq C \Vert \Th \Vert_{W^{s,p}}
 \end{align} 
 for all $2\leq p < \infty$, and all $s\geq 0$. 
Therefore the regime $\gamma > 1/2$ becomes ``sub-critical'' for such solutions, since the map $\Th\mapsto\Ub$ is bounded in Sobolev spaces. Therefore, energy estimates which are similar to those used to prove the global regularity of the sub-critical SQG equation (cf.~\cite{ConstantinWu99}), may be used with minor modifications to prove Theorem~\ref{thm:well-prepared:global} in the setting $\gamma > 1/2$.
 
The case $\gamma = 1/2$ is slightly more delicate. In the case of the critical (SQG) equation, global regularity in the critical case has only been established recently (cf.~\cite{CaffarelliVasseur10, KiselevNazarovVolberg07} ).
Due to the anisotropic nature of the symbol $\MM$, it turns out that it is slightly easier to see that the DeGiorgi-inspired proof of \cite{CaffarelliVasseur10} also takes care of the $\gamma=1/2$ case of this theorem. The only fact we must verify is that when $\Th \in L^\infty$, and $\supp(\hat \Th) \in \PP_q$, then\footnote{Note that for generic $\Th \in L^\infty$ functions $M[\Th]$ does not belong to $BMO$, but rather to $BMO^{-1}$ (cf.~\cite[Section 4]{FriedlanderVicol11a}).} $\Ub = \MM[\Th] \in BMO$. Once we prove this, the DeGiorgi iteration scheme of \cite{CaffarelliVasseur10}, shows that $\Th(t,\cdot)$ is H\"older continuous for $t>0$, and one may use energy arguments to conclude the proof of Theorem~\ref{thm:well-prepared:global}, similarly to \cite[Appendix]{FriedlanderVicol11c}. 
Lastly, to verify that $\Ub \in BMO$,  define a new operator $\MM_q$ as the Fourier multiplier with symbol $\hat{\MM}(\kk) \varphi_q(\kk)$, where $\varphi_q(\kk)$ is a function which is identically $1$ on $\PP_q$, and vanishes identically at a fixed distance away from $\PP_q$. The advantage is that $\MM_q$ is now a periodic pseudo-differential operator of order $0$, and hence maps $L^\infty$ to periodic $BMO$ (cf.~\cite{McLean}). This is attributed to boundedness  of $\hat{\MM}_q$, inherited from  the symbols $\MM_j$, and follows from the fact that $\mathcal{P}_q$ and its collar neighborhood are not entirely contained in the curved region of the frequency space where $\hat{M}_j$ become unbounded. Lastly one may verify that when $\supp(\hat{\Th}) \subset \PP_q$, then $\MM_q [\Th] = \MM[\Th]$, thereby concluding the proof of the theorem. 
\end{proof}

\begin{theorem}[{\bf $\gamma < 1/2$: Global existence for small data}] \label{thm:well-prepared:global:a}
Let $\gamma, s, \Th_0$ and $S$ be as in the statement of Theorem~\ref{thm:well-prepared:local}.
 If $\gamma  <1/2$,  there exists a sufficiently small constant $\varepsilon>0$, such that if 
 $\Vert\Th_0\Vert_{L^2}^\alpha \Vert \Th_{0} \Vert_{H^{s}}^{1-\alpha} +\Vert\Th_0\Vert_{L^2}^\alpha \Vert S \Vert_{L^\infty(0,\infty;H^{s-\gamma})}^{1-\alpha}\leq \varepsilon$, where $\alpha = 1 - (5/2-\gamma)/s$, then the unique smooth solution $\Th$ of the Cauchy problem  \eqref{eq:FractionalMG:1}--\eqref{eq:FractionalMG:2} is global in time.
\end{theorem}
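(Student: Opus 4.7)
The plan is to mirror the proof of Theorem~\ref{thm:121:global}, exploiting the structural improvement available in the well-prepared regime. Theorem~\ref{thm:well-prepared:local} produces a local-in-time solution $\Th$ with $\supp(\hat\Th(t,\cdot))\subset\PP_q$, and since this property is preserved by the flow, $\Ub=\MM[\Th]$ coincides with $\MM_q\Th$, the truncated multiplier introduced in the proof of Theorem~\ref{thm:well-prepared:global}, which is a bounded periodic pseudodifferential operator of order zero. Consequently $\Vert \Lambda^\sigma \Ub \Vert_{L^p} \leq C_q \Vert \Lambda^\sigma \Th \Vert_{L^p}$ for every $\sigma \geq 0$ and every $2\leq p<\infty$ by the H\"ormander--Mikhlin theorem. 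This removes the order-one derivative loss in $\MM$ which forced $\gamma>1/2$ in Theorem~\ref{thm:121:global}, and is the single new ingredient on which the argument rests.

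For the $H^s$ energy estimate I would test \eqref{eq:FractionalMG:1} against $\Lambda^{2s}\Th$ and, using $\ddiv\Ub=0$, recast the nonlinear term as a commutator,
\begin{align*}
\int_{\TT^3}\Ub\cdot\nabla\Th\;\Lambda^{2s}\Th\,d\xx
= \int_{\TT^3}[\Lambda^s,\Ub\cdot\nabla]\Th\;\Lambda^s\Th\,d\xx.
\end{align*}
Applying Proposition~\ref{prop:calculus} with the symmetric choice $p_1=p_4=3/\gamma$ and $p_2=p_3=6/(3-2\gamma)$, both of which lie in $(2,\infty)$ for every $\gamma\in(0,1/2)$, together with the order-zero bound on $\Ub$ above, yields
\begin{align*}
\Vert[\Lambda^s,\Ub\cdot\nabla]\Th\Vert_{L^2}
\leq C_q \Vert\nabla\Th\Vert_{L^{3/\gamma}} \Vert\Lambda^s\Th\Vert_{L^{6/(3-2\gamma)}}.
\end{align*}
Proposition~\ref{prop:Sobolev} gives $\Vert\Lambda^s\Th\Vert_{L^{6/(3-2\gamma)}}\leq C\Vert\Lambda^{s+\gamma}\Th\Vert_{L^2}$, and the Poincar\'e bound $\Vert\Lambda^s\Th\Vert_{L^2}\leq \Vert\Lambda^{s+\gamma}\Th\Vert_{L^2}$ (available since $\Th$ has zero spatial mean) together with an $\varepsilon$-Young treatment of the source term then yields
\begin{align*}
\frac{d}{dt}\Vert\Lambda^s\Th\Vert_{L^2}^2
+ \bigl(\kappa - 2C_q\Vert\nabla\Th\Vert_{L^{3/\gamma}}\bigr)\Vert\Lambda^{s+\gamma}\Th\Vert_{L^2}^2
\leq \frac{1}{\kappa}\Vert\Lambda^{s-\gamma}S\Vert_{L^2}^2.
\end{align*}

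To close the estimate, I would invoke the Gagliardo--Nirenberg interpolation
\begin{align*}
\Vert\nabla\Th\Vert_{L^{3/\gamma}}
\leq C\Vert\Th\Vert_{L^2}^{\alpha} \Vert\Lambda^s\Th\Vert_{L^2}^{1-\alpha},
\qquad \alpha = 1 - (5/2-\gamma)/s,
\end{align*}
whose scaling is immediate and whose validity requires exactly $s>5/2-\gamma$. An $L^2$ test of \eqref{eq:FractionalMG:1} against $\Th$, combined with the Poincar\'e inequality $\Vert\Th\Vert_{L^2}\leq \Vert\Lambda^\gamma\Th\Vert_{L^2}$, produces the uniform bound $\Vert\Th(t,\cdot)\Vert_{L^2}\leq C(\Vert\Th_0\Vert_{L^2} + \Vert S\Vert_{L^\infty(0,\infty;H^{-\gamma})}/\kappa)$. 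A standard continuity argument then closes the bootstrap: choose $\varepsilon$ so small that the hypothesis guarantees $2C_q\Vert\Th_0\Vert_{L^2}^\alpha\Vert\Lambda^s\Th_0\Vert_{L^2}^{1-\alpha} \leq \kappa/4$; while the analogous inequality holds at time $t$, the differential inequality combined with Poincar\'e yields the uniform bound $\Vert\Lambda^s\Th(t,\cdot)\Vert_{L^2}^2 \leq \Vert\Lambda^s\Th_0\Vert_{L^2}^2 + 2\kappa^{-2}\Vert S\Vert_{L^\infty(0,\infty;H^{s-\gamma})}^2$, which when re-inserted into the interpolation propagates the smallness hypothesis for all $t>0$, thereby producing the global smooth solution.

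The main obstacle, relative to Theorem~\ref{thm:121:global}, is that when $\gamma<1/2$ the direct product estimate on $\Ub\cdot\nabla\Th$ would force the Sobolev exponent $p=6/(5-4\gamma)<2$, outside the scope of Proposition~\ref{prop:calculus}. The commutator formulation shifts the critical exponent to $p=6/(3-2\gamma)>2$ for every $\gamma\in(0,1/2)$; this is possible precisely because in the well-prepared case $\Ub$ carries only $s$ (rather than $s+1$) derivatives at the level of the commutator estimate, and verifying that the resulting interpolation exponent $\alpha=1-(5/2-\gamma)/s$ is the one encoded in the smallness hypothesis of the theorem is the main quantitative step.
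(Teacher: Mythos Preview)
Your proposal is correct and is precisely the argument the paper has in mind: the paper's own proof is a single sentence, observing that since $\MM$ acts as an order-zero operator on functions with frequency support in $\PP_q$, the situation reduces to the small-data global theory for the supercritical (SQG) equation, and then citing \cite{CordobaCordoba04,Ju04,Wu04}. You have supplied those details.

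Two remarks worth noting. First, the commutator estimate with the exponents $p_1=p_4=3/\gamma$, $p_2=p_3=6/(3-2\gamma)$ is exactly the choice already used in the paper's proof of Theorem~\ref{thm:well-prepared:local} (the $6/(6-\gamma)$ appearing there is a typo; your $6/(3-2\gamma)$ is the correct H\"older dual), so your energy estimate is consistent with the paper's local-existence machinery. Second, your explanation of why the product-estimate route of Theorem~\ref{thm:121:global} fails for $\gamma<1/2$ is to the point: the obstruction comes from the $\nabla\Th$ factor, not from $\Ub$, so the order-zero bound on $\MM$ alone does not rescue that approach and one must pass to the commutator formulation, which is what the SQG references do. The resulting interpolation exponent $\alpha=1-(5/2-\gamma)/s$ matches the hypothesis of the theorem, confirming that you have the intended argument.
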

\begin{proof}[Proof of Theorem~\ref{thm:well-prepared:global:a}]
Since $\MM$ acts as an operator of order $0$ when applied to functions with frequency support on $\PP_q$, from the point of view of energy estimates the situation we are in is exactly the same as for the super-critical (SQG) equation. The proof the theorem follows from the same arguments used in~\cite{CordobaCordoba04, Ju04, Wu04} to show the global well-posedness of small solutions to the super-critical (SQG) equation.
\end{proof}

\appendix
\section{Smooth solutions to a linear equation for~data~with~``thin''~Fourier~support} \label{sec:app:linear}
The goal of this appendix is to prove the existence of smooth solutions to the scalar {\em linear} equation
  \begin{align}
    & \partial_t \theta + v \cdot \nabla \theta + (-\Delta)^\gamma \theta = S \label{eq:app:1}\\
    & \theta|_{t=0} = \theta_0 \label{eq:app:2}
  \end{align}
where the initial datum $\theta_{0}$, the {\em given} divergence-free drift velocity field $v$, and the external source term $S$, all have frequency support in the same plane $\PP_{q}$, i.e.
\begin{align}
\supp(\hat{\theta}_0), \supp(\hat S (t,\cdot)), \supp(\hat v (t,\cdot)) \subset \PP_q \label{eq:app:3}
\end{align}
for all $t\in [0,T]$, for some $q \in \QQ$, $q\neq 0$. The main result is:
\begin{theorem}[{\bf Existence of solutions with support on a plane}]
\label{thm:app:linear}
Let $\gamma \in (0,1)$, $ d\geq 2$, and fix $s > d/2 + 1 - \gamma$. Given $\theta_{0} \in H^{s}(\TT^{d})$, a divergence-free $v\in L^{\infty}(0,T;H^{s}(\TT^{d}))$, and  $S\in L^{\infty}(0,T;H^{s-\gamma}(\TT^{d}))$, such that \eqref{eq:app:3} holds, there exists a unique solution
\begin{align*}
\theta \in L^{\infty}(0,T;H^{s}(\TT^{d})) \cap L^{2}(0,T;H^{s+\gamma}(\TT^{d}))
\end{align*} of the initial value problem \eqref{eq:app:1}--\eqref{eq:app:2}, and we have that
\begin{align*}
\supp(\hat \theta (t,\cdot)) \subset \PP_{q}
\end{align*}
holds for all $t \in [0,T]$.
\end{theorem}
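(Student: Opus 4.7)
The plan is a Faedo-Galerkin approximation tailored to preserve the frequency support condition, followed by an $H^s$ energy estimate that closes precisely at the threshold $s > d/2 + 1 - \gamma$. For each $N \in \NN$ I would set $H_N = \mathrm{span}\{e^{i\kk\cdot\xx} : \kk \in \PP_q,\ |\kk|\leq N\}$ and let $P_N$ denote the $L^2$-orthogonal projection onto $H_N$. Because $\PP_q$ is closed under addition (cf.~Proposition~\ref{prop:support}) and $(-\Delta)^\gamma$ is a Fourier multiplier that preserves frequency support, the Galerkin system
\begin{equation*}
\partial_t \theta^N + P_N(v\cdot\nabla\theta^N) + (-\Delta)^\gamma \theta^N = P_N S, \qquad \theta^N(0) = P_N \theta_0,
\end{equation*}
is a linear ODE on the finite-dimensional space $H_N$ with time-dependent coefficients, admitting a unique solution $\theta^N$ whose Fourier support lies in $\PP_q$ for every $t\in[0,T]$.

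The main work lies in the $N$-uniform $H^s$ estimate. Applying $\Lambda^s$ to the equation and pairing with $\Lambda^s \theta^N$, and using $\nabla\cdot v = 0$ to cancel the leading-order contribution $(v\cdot\nabla \Lambda^s\theta^N,\Lambda^s\theta^N)=0$, one is left with
\begin{equation*}
\tfrac{1}{2}\tfrac{d}{dt}\|\Lambda^s\theta^N\|_{L^2}^2 + \|\Lambda^{s+\gamma}\theta^N\|_{L^2}^2 \leq \|[\Lambda^s, v\cdot\nabla]\theta^N\|_{L^2}\|\Lambda^s\theta^N\|_{L^2} + \|\Lambda^{s-\gamma}S\|_{L^2}\|\Lambda^{s+\gamma}\theta^N\|_{L^2}.
\end{equation*}
I would then invoke the Kenig-Ponce-Vega commutator estimate and split the bound into two pieces. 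For the piece $\|\nabla v\|_{L^{p_1}}\|\Lambda^s\theta^N\|_{L^{p_2}}$ I would take $p_1 = d/\gamma$ and $p_2 = 2d/(d-2\gamma)$, so that the Sobolev embeddings $H^{s-1}\hookrightarrow L^{d/\gamma}$ (valid since $s-1 > d/2-\gamma$) and $H^\gamma \hookrightarrow L^{2d/(d-2\gamma)}$ bound it by $C\|v\|_{H^s}\|\Lambda^{s+\gamma}\theta^N\|_{L^2}$. For the piece $\|\Lambda^s v\|_{L^{p_3}}\|\nabla\theta^N\|_{L^{p_4}}$ I would use $(p_3,p_4)=(2,\infty)$ together with $H^{s+\gamma-1}\hookrightarrow L^\infty$ (valid because $s+\gamma > d/2+1$) to obtain the same bound. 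Young's inequality then absorbs a factor of $\|\Lambda^{s+\gamma}\theta^N\|_{L^2}$ into the dissipation, and Gronwall produces a bound for $\theta^N$ in $L^\infty(0,T;H^s)\cap L^2(0,T;H^{s+\gamma})$ that depends only on $\|\theta_0\|_{H^s}$, $\|v\|_{L^\infty_t H^s}$, $\|S\|_{L^\infty_t H^{s-\gamma}}$, and $T$.

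With the uniform energy bound in hand, the equation itself supplies a bound on $\partial_t \theta^N$ in $L^2(0,T;H^{s-1})$, and Aubin-Lions then yields strong convergence of a subsequence of $\theta^N$ in $L^2(0,T;H^{s'})$ for every $s' < s+\gamma$, which is sufficient to pass to the limit in each term of the linear equation and identify $\theta$ as the desired solution with $\theta \in L^\infty(0,T;H^s)\cap L^2(0,T;H^{s+\gamma})$. The Fourier support property $\supp(\hat\theta(t,\cdot))\subset \PP_q$ descends to $\theta$ by weak convergence of Fourier coefficients, since at each level $\theta^N$ has support in $\PP_q$ by construction. Uniqueness is standard: if $\theta^{(1)}, \theta^{(2)}$ are two solutions, their difference $\delta = \theta^{(1)} - \theta^{(2)}$ solves the homogeneous equation with divergence-free drift $v$ and zero initial data, and the $L^2$ energy identity $\tfrac{1}{2}\tfrac{d}{dt}\|\delta\|_{L^2}^2 + \|\Lambda^\gamma\delta\|_{L^2}^2 = 0$ forces $\delta \equiv 0$. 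The one delicate point in the argument is the choice of exponents in the commutator estimate, which must be tuned precisely to the threshold $s > d/2 + 1 - \gamma$ in order to distribute the $\gamma$ derivatives of excess regularity supplied by the fractional dissipation; beyond this, no new ideas past standard linear parabolic theory are needed.
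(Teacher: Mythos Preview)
Your Galerkin argument is correct and genuinely cleaner than the paper's own proof. The authors flag in Remark~\ref{rem:app} a perceived tension: a Picard scheme with $v\cdot\nabla\theta_n$ on the right preserves the $\PP_q$ support but loses a derivative, while an implicit scheme $v\cdot\nabla\theta_{n+1}$ closes the energy estimate but makes the support property opaque. Their resolution is to add hyper-dissipation $-\eps\Delta$, run the explicit Picard iteration (closing the estimates only thanks to the extra Laplacian, hence with $\eps$-dependent bounds), pass $n\to\infty$ to get $\theta^\eps$, obtain $\eps$-uniform bounds on $\theta^\eps$ directly from the regularized equation, and then send $\eps\to 0$ via Aubin--Lions. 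Your approach sidesteps this entirely: by projecting onto $H_N=\mathrm{span}\{e^{i\kk\cdot\xx}:\kk\in\PP_q,\ |\kk|\leq N\}$ you build the support constraint into the approximation space, and because $P_N$ is self-adjoint and commutes with $\Lambda^s$, the divergence-free cancellation survives and the commutator estimate closes exactly as in the limiting equation. The payoff is a single limit rather than two, and no auxiliary viscosity. The paper's route has the minor advantage of displaying explicitly (via Duhamel) why each iterate inherits the $\PP_q$ support, but at the cost of a more circuitous argument; your observation that Galerkin truncation within $\PP_q$ resolves both difficulties simultaneously is the sharper one.
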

\begin{remark} \label{rem:app}
The main difficulty in proving Theorem~\ref{thm:app:linear} is in designing an iteration scheme which is both suitable  for energy estimates, and preserves the feature that in each iteration step the frequency support of the approximation lies on $\PP_{q}$. In this direction we note that a scheme such that $\partial_{t} \theta_{n+1}$ is given in terms of $v \cdot \nabla \theta_{n}$ automatically preserves the frequency support on $\PP_{q}$ in view of Lemma~\ref{prop:support}, but is not suitable for closing the estimates at the level of Sobolev spaces. On the other hand, if we consider $\partial_{t} \theta^{(n+1)}$ to depend on $v \cdot \nabla \theta_{n+1}$, while energy estimates are now clear, it seems difficult to inductively obtain that $\supp( \hat{\theta}_{n+1}) \subset \PP_{q}$.
\end{remark}
\begin{proof}[Proof of Theorem~\ref{thm:app:linear}]

Since the iteration scheme which is suitable for controlling  the frequency support of the solution is ``loosing'' a derivative, we regularize \eqref{eq:app:1}--\eqref{eq:app:2} with hyper-dissipation as
\begin{align}
&\partial_{t} \theta^{\eps} + v \cdot \nabla \theta^{\eps} + (-\Delta)^{\gamma} \theta^{\eps} - \eps \Delta \theta^{\eps} = S \label{eq:app:1eps}\\
&\theta^{\eps}(0,\cdot) = \theta_{0} \label{eq:app:2eps}
\end{align}
for $\eps \in (0,1]$, and later pass to the limit  $\eps \rightarrow 0$ in order to obtain a solution of the original system. Since $v$ and $S$ are smooth, and $v$ is divergence-free, it follows from our earlier paper~\cite{FriedlanderVicol11a} that there exists a unique global (or as long as $v$ and $S$ permit) smooth solution $\theta^{\eps}$ of \eqref{eq:app:1eps}--\eqref{eq:app:2eps}, with
\begin{align}
\theta^{\eps} \in L^{\infty}(0,T;H^{s}) \cap L^{2}(0,T;H^{s+\gamma}) \cap \eps L^{2}(0,T;H^{s+1}).\label{eq:app:regularity}
\end{align}
and the solution is bounded in the above spaces {\em independently of} $\eps \in (0,1]$.

The advantage of considering the system \eqref{eq:app:1eps}--\eqref{eq:app:2eps} over \eqref{eq:app:1}--\eqref{eq:app:2}, is that for the hyper-dissipative system, we can {\em construct} a smooth solution (as will be shown below), which has frequency support lying in $\PP_{q}$. Since \eqref{eq:app:1eps} is linear, and we work in the smooth category, i.e. $s>d/2+1-\gamma$, the {\em unique} smooth solution of \eqref{eq:app:1eps}--\eqref{eq:app:2eps} satisfies $\supp(\hat{\theta}^{\eps}(t,\cdot)) \subset \PP_{q}$ for all $t\in [0,T)$. We note already that the $\eps$-independent bounds in the regularity class \eqref{eq:app:regularity}, will allow us to pass to a limit $\theta^{\eps} \rightarrow \theta$, as $\eps \rightarrow 0$, and this limiting function will automatically satisfy $\supp(\hat \theta) \subset \PP_{q}$, since the latter space is closed and discrete.

We now proceed to construct a solution $\theta^{\eps}$ of \eqref{eq:app:1eps}--\eqref{eq:app:2eps}, which has the desired frequency support property. We consider the following iterative scheme: the first iterate is given by the solution of
\begin{align}
& \partial_{t} \theta_{1}^{\eps} + (-\Delta)^{\gamma} \theta_{1}^{\eps} - \eps \Delta \theta_{1}^{\eps} = S \label{eq:app:1st:1} \\
& \theta_{1}^{\eps}(0,\cdot) = \theta_{0} \label{eq:app:1st:2}
\end{align}
for all $t\in [0,T)$, while the $n+1^{st}$ iterate is given as the unique solution of
\begin{align}
&\partial_{t} \theta_{n+1}^{\eps} + v \cdot \nabla \theta_{n}^{\eps}  + (-\Delta)^{\gamma} \theta_{n+1}^{\eps}  - \eps \Delta \theta_{n+1}^{\eps} = S\label{eq:app:nth:1}\\
&\theta_{n+1}^{\eps}(0,\cdot) = \theta_{0} \label{eq:app:nth:2}
\end{align}
for all $n \geq 1$. We note that the solutions of \eqref{eq:app:1st:1}--\eqref{eq:app:1st:2} and \eqref{eq:app:nth:1}--\eqref{eq:app:nth:2} respectively, may be written explicitly using the Duhamel formula
\begin{align}
&\theta_{1}^{\eps}(t) = e^{(\eps (-\Delta) +(-\Delta)^{\gamma})t} \theta_{0} + \int_{0}^{t}e^{(\eps (-\Delta) +(-\Delta)^{\gamma}) (t-\tau)} S(\tau) d\tau \label{eq:app:theta:1}\\
&\theta_{n+1}^{\eps}(t) = e^{(\eps (-\Delta) +(-\Delta)^{\gamma})t} \theta_{0} + \int_{0}^{t}e^{(\eps (-\Delta) +(-\Delta)^{\gamma}) (t-\tau)} \left(\nabla \cdot (v\; \theta_{n}^{\eps})(\tau) + S(\tau)\right) d\tau \label{eq:app:theta:n+1}.
\end{align}
Since the operator $\exp\left( (\eps (-\Delta) +(-\Delta)^{\gamma})t\right)$ is given explicitly by the Fourier multiplier with {\em non-zero} symbol $\exp\left( (\eps |\kk| + |\kk|^{\gamma})t\right)$, this operator does not alter the frequency support of the function it acts on. Therefore, it follows directly from Proposition~\ref{prop:support}, our assumptions on the frequency support on $\theta_{0}$ and $S$,  that $\supp( \hat{\theta_{1}^{\eps}}(t,\cdot) ) \subset \PP_{q}$ for all $t\in [0,T)$. We proceed inductively and note that if $\supp( \hat{\theta_{n}^{\eps}}) \subset \PP_{q}$, then by our assumption on the frequency support of $v$ and Proposition~\ref{prop:support} we also have $\supp( \hat{v\,  \theta_{n}^{\eps}} ) \subset \PP_{q}$. Hence, we obtain, as in the case $n=0$, that $\supp( \hat{\theta_{n+1}^{\eps}}(t,\cdot) )\subset \PP_{q}$ for all $t\in[0,T)$, concluding the proof of the induction step. This proves that the frequency support of all the iterates $\theta_{n}^{\eps}$ lies on $\PP_{q}$.

It is left to prove that the sequence $\{ \theta_{n}^{\eps}\}_{n\geq 1}$ converges to a function $\theta^{\eps}$ which lies in the smoothness class \eqref{eq:app:regularity}.
Note that there is no cancellation of the highest order term in the nonlinearity: $\int \Lambda^{s}(v \cdot \nabla \theta_{n}^{\eps}) \Lambda^{s} \theta_{n+1}^{\eps}$. However, since at least for now $\eps>0$ is {\em fixed}, we may use the full smoothing power of the Laplacian. First, note that from \eqref{eq:app:1eps} it follows that for any $t \in (0, T]$ we have
\begin{align}
{\mathcal R}_{1}(t) :=& \sup_{[0,t]} \Vert \Lambda^{s} \theta_{1}^{\eps}(\tau,\cdot) \Vert_{L^{2}}^{2} + \int_{0}^{t} \Vert \Lambda^{s+\gamma} \theta_{1}(\tau,\cdot) \Vert_{L^{2}}^{2} d\tau + \eps \int_{0}^{t} \Vert \Lambda^{s+1} \theta_{1}(\tau,\cdot) \Vert_{L^{2}}^{2} d\tau \notag\\
&\leq \Vert \Lambda^{s} \theta_{0} \Vert_{L^{2}}^{2} + \int_{0}^{t} \Vert \Lambda^{s-\gamma} S(\tau,\cdot) \Vert_{L^{2}}^{2} d\tau. \label{eq:app:R1}
\end{align}
Hence, there exists a time $T_{1} \in (0,T]$ such that ${\mathcal R}_{1}(T_{1}) \leq 2 \Vert \Lambda^{s} \theta_{0} \Vert_{L^{2}}^{2}$, e.g., any $T_{1}$ such that
\begin{align}
T_{1}\leq \frac{\Vert \Lambda^{s} \theta_{0} \Vert_{L^{2}}^{2}}{ \Vert S \Vert_{L^{\infty}(0,T;H^{s-\gamma})}^{2}} \label{eq:app:T1:1}
\end{align} is sufficient. We proceed inductively, and assume that
\begin{align}
{\mathcal R}_{n}(t) :=& \sup_{[0,t]} \Vert \Lambda^{s} \theta_{n}^{\eps}(\tau,\cdot) \Vert_{L^{2}}^{2} + \int_{0}^{t} \Vert \Lambda^{s+\gamma} \theta_{n}(\tau,\cdot) \Vert_{L^{2}}^{2} d\tau + \eps \int_{0}^{t} \Vert \Lambda^{s+1} \theta_{n}(\tau,\cdot) \Vert_{L^{2}}^{2} d\tau
\end{align}
is such that ${\mathcal R}_{n}(T_{1}) \leq 2  \Vert \Lambda^{s} \theta_{0} \Vert_{L^{2}}^{2}$. We now show that if $T_{1}$ is chosen appropriately, in terms of $\eps, \theta_{0}, v$ and $S$, we have ${\mathcal R}_{n+1}(T_{1}) \leq 2 \Vert \Lambda^{s} \theta_{0} \Vert_{L^{2}}^{2}$ too. From \eqref{eq:app:nth:1}, the fact that $\nabla \cdot v = 0$, integration by parts, and $s>d/2+1-\gamma > d/2$ which makes $H^{s}$ an algebra, we obtain
\begin{align}
& \frac{1}{2} \frac{d}{dt} \Vert \Lambda^{s} \theta_{n+1}^{\eps} \Vert_{L^{2}}^{2} + \Vert \Lambda^{s+\gamma} \theta_{n+1}^{\eps} \Vert_{L^{2}}^{2} + \eps \Vert \Lambda^{s+1} \theta_{n+1}^{\eps} \Vert_{L^{2}}^{2}\notag\\
& \qquad \leq \Vert \Lambda^{s} (v \theta_{n}^{\eps}) \Vert_{L^{2}} \Vert  \Lambda^{s+1} \theta_{n+1}^{\eps} \Vert_{L^{2}} + \Vert \Lambda^{s+\gamma} \theta_{n+1}^{\eps} \Vert_{L^{2}} \Vert \Lambda^{s-\gamma} S \Vert_{L^{2}}  \notag\\
& \qquad \leq \frac{1}{2\eps} \Vert \Lambda^{s} v\Vert_{L^{2}}^{2} \Vert \Lambda^{s}\theta_{n}^{\eps} \Vert_{L^{2}}^{2} + \frac{\eps}{2} \Vert  \Lambda^{s+1} \theta_{n+1}^{\eps} \Vert_{L^{2}}^{2} + \frac{1}{2} \Vert \Lambda^{s+\gamma} \theta_{n+1}^{\eps} \Vert_{L^{2}}^{2} +\frac{1}{2}  \Vert \Lambda^{s-\gamma} S \Vert_{L^{2}}^{2}\label{eq:app:R:n+1}
\end{align}
from which it follows that
\begin{align}
{\mathcal R}_{n+1}(T_{1}) &\leq \Vert \Lambda^{s} \theta_{0}\Vert_{L^{2}}^{2} +\frac{1}{\eps} \int_{0}^{T_{1}} \Vert \Lambda^{s} v(\tau,\cdot) \Vert_{L^{2}}^{2} \Vert \Lambda^{s}\theta_{n}^{\eps}(\tau,\cdot) \Vert_{L^{2}}^{2}  d\tau + \int_{0}^{T_{1}} \Vert \Lambda^{s-\gamma} S(\tau,\cdot) \Vert_{L^{2}}^{2} d\tau \notag\\
&\leq \Vert \Lambda^{s} \theta_{0}\Vert_{L^{2}}^{2} + T_{1} \left(  \frac{2}{\eps}  \Vert v\Vert_{L^{\infty}(0,T;H^{s})}^{2}  \Vert \Lambda^{s} \theta_{0} \Vert_{L^{2}}^{2}  + \Vert S\Vert_{L^{\infty}(0,T;H^{s-\gamma})}^{2} \right) .
\end{align}
Hence, it follows from \eqref{eq:app:R1} that if we let
\begin{align}
T_{1} = \frac{\eps \Vert \Lambda^{s} \theta_{0} \Vert_{L^{2}}^{2}}{4  \Vert v\Vert_{L^{\infty}(0,T;H^{s})}^{2}  \Vert \Lambda^{s} \theta_{0} \Vert_{L^{2}}^{2}  +  \eps \Vert S\Vert_{L^{\infty}(0,T;H^{s-\gamma})}^{2} } \label{eq:app:T1:2}
\end{align}
then we have ${\mathcal R}_{n+1}(T_{1}) \leq 2  \Vert \Lambda^{s} \theta_{0} \Vert_{L^{2}}^{2}$.
Since the choice of $T_{1}$ cf.~\eqref{eq:app:T1:2} is independent, of $n$, it is clear that the inductive argument may be carried through, and hence ${\mathcal R}_{n}(T_{1}) \leq 2  \Vert \Lambda^{s} \theta_{0} \Vert_{L^{2}}^{2}$ independently of $n\geq 1$. To pass to a limit in $n$, we consider the difference of two iterates, and note that
\begin{align}
&\partial_{t} (\theta_{n+1}^{\eps} - \theta_{n}^{\eps})  + (-\Delta)^{\gamma}  (\theta_{n+1}^{\eps} - \theta_{n}^{\eps})   - \eps \Delta  (\theta_{n+1}^{\eps} - \theta_{n}^{\eps})   + v\cdot \nabla (\theta_{n}^{\eps} - \theta_{n-1}^{\eps})= 0 \label{eq:app:difference}\\
&  (\theta_{n+1}^{\eps} - \theta_{n}^{\eps}) (0,\cdot) = 0
\end{align}
for all $n\geq 2$.
Similarly to \eqref{eq:app:R:n+1}, it follows from  \eqref{eq:app:difference} that
\begin{align}
\tilde{{\mathcal R}}_{n}(t) :=& \sup_{[0,t]} \Vert \Lambda^{s} ( \theta_{n+1}^{\eps}- \theta_{n}^{\eps}) (\tau,\cdot) \Vert_{L^{2}}^{2} + \int_{0}^{t} \Vert \Lambda^{s+\gamma} (\theta_{n+1}-\theta_{n}^{\eps})(\tau,\cdot) \Vert_{L^{2}}^{2} d\tau \notag\\
&\qquad \qquad \qquad + \eps \int_{0}^{t} \Vert \Lambda^{s+1} (\theta_{n+1}-\theta_{n}^{\eps}) (\tau,\cdot) \Vert_{L^{2}}^{2} d\tau \notag \\
&\leq \frac{2}{\eps} \int_{0}^{t} \Vert \Lambda^{s} v(\tau,\cdot) \Vert_{L^{2}}^{2} \Vert \Lambda^{s} \theta(\tau,\cdot) \Vert_{L^{2}}^{2} d\tau
\end{align}
and therefore
\begin{align}
\tilde{\mathcal R}_{n}(T_{1}) \leq \frac{2 T_{1} }{\eps} \Vert v \Vert_{L^{\infty}(0,T;H^{s})}^{2} \tilde{\mathcal R}_{n-1}(T_{1}) \label{eq:app:diff:2}
\end{align}
for all $n\geq 2$. Thus, due to our choice of $T_{1}$ cf.~\eqref{eq:app:T1:2}, we have that
\begin{align}
T_{1} \leq \frac{\eps}{4  \Vert v \Vert_{L^{\infty}(0,T;H^{s})}^{2}}
\end{align}
and hence $\tilde{\mathcal R}_{n}(T_{1}) \leq \tilde{\mathcal R}(T_{1})/2$, which implies that the sequence $\{ \theta_{n}^{\eps} \}_{n\geq 1}$ is not just bounded, but also a contraction in
\begin{align}
L^{\infty}(0,T_{1};H^{s}) \cap L^{2}(0,T_{1};H^{s+\gamma}) \cap \eps L^{2}(0,T_{1};H^{s+1}). \label{eq:app:regularity:1}
\end{align}
Hence there exists a limiting function $\theta^{\eps}$ in the category \eqref{eq:app:regularity:1}. In addition, since for every $n\geq 1$ we have $\supp(\hat{\theta_{n}^{\eps}}) \subset \PP_{q}$, and the set $\PP_{q}$ is closed (and even discrete), we automatically obtain that $\supp(\hat{\theta^{\eps}}) \subset \PP_{q}$.

To show that $\theta^{\eps}$ may be continued in the category \eqref{eq:app:regularity:1} up to time $T$, we note that $\Vert \Lambda^{s} \theta^{\eps}(T_{1})  \Vert_{L^{2}}^{2} \leq 2 \Vert \Lambda^{s} \theta_{0} \Vert_{L^{2}}^{2}$, and hence repeating the above argument with initial condition $\theta^{\eps}(T_{1})$, we obtain a solution $\theta^{\eps} \in L^{\infty}(0,T_{1}+T_{2};H^{s}) \cap L^{2}(0,T_{1}+T_{2};H^{s+\gamma}) \cap \eps L^{2}(0,T_{1}+T_{2},H^{s+1})$, where
\begin{align}
T_{2} = \frac{2 \eps \Vert \Lambda^{s} \theta_{0} \Vert_{L^{2}}^{2}}{8  \Vert v\Vert_{L^{\infty}(0,T;H^{s})}^{2}  \Vert \Lambda^{s} \theta_{0} \Vert_{L^{2}}^{2}  +  \eps \Vert S\Vert_{L^{\infty}(0,T;H^{s-\gamma})}^{2} }
\end{align}
which is such that $\Vert \Lambda^{s} \theta^{\eps}(T_{1}+T_{2})\Vert_{L^{2}}^{2} \leq 4 \Vert \theta_{0} \Vert_{L^{2}}^{2}$. Since the series
\begin{align}
\sum_{k\geq 0} \frac{2^{k} \eps \Vert \Lambda^{s} \theta_{0} \Vert_{L^{2}}^{2}}{2^{k+2}\Vert v\Vert_{L^{\infty}(0,T;H^{s})}^{2}  \Vert \Lambda^{s} \theta_{0} \Vert_{L^{2}}^{2}  +  \eps \Vert S\Vert_{L^{\infty}(0,T;H^{s-\gamma})}^{2} }
\end{align}
diverges for every fixed $\eps>0$, the above argument may be terminated after finitely many steps, concluding the construction of the solution $\theta^{\eps}$ in the category \eqref{eq:app:regularity}.

In order to conclude the proof of the lemma we need to pass to a limit as $\eps \rightarrow 0$. By construction we have that $\theta^{\eps}$ is uniformly bounded, with respect to $\eps$,  in $L^{\infty}(0,T;H^{s}) \cap L^{2}(0,T;H^{s+\gamma})$, and from \eqref{eq:app:1eps} we obtain that $\partial_{t} \theta^{\eps}$ is uniformly bounded, with respect to $\eps$, in $L^{2}(0,T;H^{s-2+\gamma})$. Thus, by the Aubin-Lions compactness lemma (see for instance~\cite{Temam01}), we obtain a weak limit $\theta \in L^{\infty}(0,T;H^{s}) \cap L^{2}(0,T;H^{s+\gamma})$, so that the convergence $\theta^\eps \rightarrow \theta$ is strong in $L^{2}(0,T;H^{s})$.
Since the evolution is linear and $s$ is large enough, it follows that this limiting function is the unique solution of \eqref{eq:app:1} which lies in $L^{\infty}(0,T;H^{s}) \cap L^{2}(0,T;H^{s+\gamma})$. Lastly, since for every $\eps>0$ we have $\supp(\hat{\theta^{\eps}}) \subset \PP_{q}$, and since $\PP_{q}$ is closed, we obtain that the limiting function also has the desired support property, i.e. $\supp(\hat{\theta}) \subset \PP_{q}$, which concludes the proof of the theorem.
\end{proof}

\subsection*{Acknowledgements} The work of SF is supported in part by the NSF grant DMS-0803268. We are grateful to Pierre Germain for stimulating discussions on an early version of this paper.


\end{document}